\documentclass[11pt,reqno]{amsart}%,oneside

\usepackage{setspace}
\usepackage{hyperref}

\usepackage{color}
\usepackage[all]{xy}
\usepackage{amssymb, amsmath, amsfonts, amsthm}
\usepackage{enumitem}
\usepackage{comment}

\calclayout

\usepackage{amsmath,amssymb}
\usepackage{amsthm}
\usepackage{mathabx}
\usepackage{amsfonts}
\usepackage{pst-node}
\usepackage{graphicx}
\usepackage{pifont}
\usepackage[T1]{fontenc}
\usepackage{tikz-cd}
 
\newtheorem{theorem}{Theorem}[section]
\newtheorem{proposition}[theorem]{Proposition}
\newtheorem{corollary}[theorem]{Corollary}
\newtheorem{lemma}[theorem]{Lemma}
\theoremstyle{definition}
\newtheorem{definition}[theorem]{Definition}
\newtheorem{example}[theorem]{Example}

\newtheorem{remark}[theorem]{Remark}

\makeatletter
\@addtoreset{equation}{section}
\makeatother

\newcommand{\bbZ}{{\mathbb Z}} % the set of integer numbers
\newcommand{\bbF}{{\mathbb F}} % the unit circle (the one dimensional torus)

\newcommand{\bC}{{\mathbf C}}

\newcommand{\cC}{{\mathcal C}}
\newcommand{\cCe}{{\mathcal Ce}}

\newcommand{\cE}{{\mathcal E}}
\newcommand{\cF}{{\mathcal F}}
\newcommand{\cG}{{\mathcal G}}
\newcommand{\cH}{{\mathcal H}}

\newcommand{\cL}{{\mathcal L}}
\newcommand{\cM}{{\mathcal M}}
\newcommand{\cN}{{\mathcal N}}

\newcommand{\cO}{{\mathcal O}}

\newcommand{\id}{\mathrm{id}}

\DeclareMathOperator*{\hocolim}{hocolim}

\def\maprt#1{\smash{\,\mathop{\longrightarrow}\limits^{#1}\,}}

\newcommand{\Aut}{{\rm Aut}}
\newcommand{\Inn}{{\rm Inn}}

\newcommand{\Hom}{\mathrm{Hom}}

\newcommand{\Mor}{\mathrm{Mor}}
\newcommand{\Map}{\mathrm{Map}}
\newcommand{\Ext}{\mathrm{Ext}}

\newcommand{\Res}{\mathrm{Res}}

\newcommand{\bbZp}{\mathbb{Z}_{(p)}}

\newcommand{\Out}{\mathrm{Out}}

\begin{document}
\title{Higher limits over the fusion orbit category via subgroup complexes}
 
\author{Erg\"un Yal{\c c}{\i}n}
\address{Bilkent University, Department of Mathematics, 06800, Bilkent, Ankara, Turkey}

\email{yalcine@fen.bilkent.edu.tr}

\begin{abstract}  
Let $\mathcal{F}$ be a saturated fusion system over a finite $p$-group $S$, and $\mathcal{O}^c (\mathcal{F})$ denote the centric orbit category of $\mathcal{F}$. Suppose that $G$ is a discrete group realizing $\mathcal{F}$, and $\mathcal{C}$ is the collection of all $p$-centric subgroups in $G$. We prove that for every $\mathbb{Z}_{(p)} \mathcal{O} ^c (\mathcal{F})$-module $M$, the higher limits $\lim ^* _{\mathcal{O}^c (\mathcal{F})} M$ are isomorphic to the equivariant cohomology $H_ G ^* (|\mathcal{C}| ; \mathcal{M} )$, where $\mathcal{M}$ is the $G$-equivariant coefficient system on $|\mathcal{C}|$ induced by $M$. This is the fusion system version of a theorem by Grodal for higher limits over the $p$-orbit category $\mathcal{O}_p(G)$ of a finite group $G$.
As a consequence, for every fusion system $\cF$, we obtain a finite sequence of cohomology groups, involving the cohomology of $\mathcal{F}$, which is exact if and only if the mod-$p$ subgroup decomposition for $\mathcal{F}$ over $\mathcal{F}$-centric subgroups is sharp.
\end{abstract}

\subjclass{Primary: 55R35; Secondary: 18G10, 20D20, 55N91.}

\keywords{Homology decompositions for classifying spaces, Orbit Category, Fusion systems, Higher limits, $p$-local finite group, Cohomology of small
categories.}

\maketitle

%------------------------

\setcounter{tocdepth}{1}
\tableofcontents
%\tableofcontents[hideallsubsections]

\section{Introduction and statement of results}\label{sect:Introduction}

Let $G$ be a discrete group and $\cC$ be a collection of subgroups of $G$ (closed under conjugation). The \emph{orbit category} $\cO _{\cC} (G)$ is the category whose objects are the subgroups $H \in \cC$, and whose morphisms from $H$ to $K$, with $H,K\in \cC$, are given by $G$-maps $G/H \to G/K$. For a commutative ring $R$ with unity, a functor $M: \cO _{\cC} (G) ^{op}\to R$-Mod
is called a (contravariant) \emph{$R\cO _{\cC} (G)$-module}. For example, for each $n\geq 0$, the functor $P \to H^n (P)$ that sends a subgroup $P\in \cC$ to its $n$-th mod-$p$ cohomology is an $\bbF_p \cO _{\cC} (G)$-module. 

For every $i\geq 0$, the \emph{higher limit} $\lim _{\cO _{\cC} (G)} ^i M$ of an $R\cO_{\cC} (G)$-module $M$ is defined as the $i$-th right derived functor of the limit functor $\lim _{\cO _{\cC} (G)} (-)$. The \emph{mod-$p$ subgroup decomposition} for the classifying space $BG$ over the collection $\cC$ is a mod-$p$ homology isomorphism
\[
\hocolim _{\cO_{\cC} (G)} \widetilde B \xrightarrow{} BG,
\]
where $\widetilde B: \cO_{\cC} (G)\to Top$ is a functor such that $\widetilde B(P)$ has the homotopy type of $BP$ for every $P\in \cC$. The mod-$p$ subgroup decomposition for $BG$ over $\cC$ is said to be \emph{sharp} if $$\underset{\cO _{\cC} (G)} {\lim {}^i} \ H^n (-) =0 $$ for every $i>0$ and $n\geq 0$. 
A $p$-subgroup $P$ of $G$ is \emph{$p$-centric} if $Z(P)$ is the Sylow $p$-subgroup of $C_G(P)$. The subgroup decomposition for a finite group $G$ is sharp for many collections $\cC$ such as the collection of all nontrivial $p$-subgroups of $G$, and the collection of all $p$-centric subgroups of $G$ (see \cite[Corollary 8.13]{Grodal-Higher} and \cite[Theorem 1.2]{Grodal-Smith}).

In \cite{Grodal-Higher}, Grodal proved a theorem that relates the higher limits over the orbit category $\cO _{\cC} (G)$ to the $G$-equivariant cohomology of the simplicial complex $|\cC|$ with coefficients in a $G$-equivariant coefficient system (see Theorem \ref{thm:IntroGrodal} below). We now introduce necessary definitions to state Grodal's theorem.

For a discrete group $G$, and a collection $\cC$ of subgroups of $G$, let $|\cC|$ denote the $G$-simplicial complex of chains of subgroups in $\cC$, where the $G$-action is given by conjugation. The equivariant simplex category $\Delta (|\cC|)_G$ of $|\cC|$ is defined by applying the Grothendieck construction to the simplex category of $|\cC|$  (see Section \ref{sect:Grodal's theorem} for details). For a commutative ring $R$ with unity, a functor $\cM : \Delta(|\cC|)_G \to R$-Mod is called an \emph{equivariant coefficient system} on $|\cC|$ over $R$. 

For a $G$-simplicial set $X$ and an equivariant coefficient system $\cM$, let $H^* _G (X; \cM)$ denote the equivariant cohomology of $X$ with coefficients in $\cM$ (see Section \ref{sect:EquivariantCoefSystem}). Grodal proved the following  theorem: 

\begin{theorem}[{Grodal \cite[Theorem 1.1]{Grodal-Higher}}]\label{thm:IntroGrodal} Let $\cC$ be a collection of $p$-subgroups of a finite group $G$, closed under passage to $p$-radical overgroups. Let $M: \cO_{\cC} (G) ^{op}\to \bbZp$-Mod be any functor, and let $\cM$ be the $G$-equivariant coefficient system on $|\cC|$ defined by precomposing $M$ with $\xi: \Delta (|\cC|)_G \to  \cO_{\cC}(G) ^{op}$ (see Definition \ref{def:xi}). Then there is an isomorphism $$\underset{\cO_{\cC}(G) }{\lim {}^*} M \cong H_ G ^* (|\cC| ; \cM ).$$
\end{theorem}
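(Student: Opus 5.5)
Here is how I would prove Grodal's theorem. The strategy is to compare two spectral sequences, or more precisely two chain complexes. One computes $\lim^*_{\cO_\cC(G)}M$. The other computes $H^*_G(|\cC|;\cM)$.

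\textbf{Step 1: reduce the equivariant side to a limit.} I would first express the equivariant cohomology as a higher limit over the equivariant simplex category:
\[
H^*_G(|\cC|;\cM)\cong \lim{}^*_{\Delta(|\cC|)_G}\cM .
\]
This follows because the cellular cochain complex of $|\cC|$ with coefficients in $\cM$ computes the derived functors of $\lim$ over $\Delta(|\cC|)_G$. I would prove this with the standard resolution by the $G$-orbits of simplices, using that $G$ acts on simplices with stabilizers $N_G(\sigma)$. Since $\cM=M\circ\xi$, it then suffices to show that
\[
\xi:\Delta(|\cC|)_G\to\cO_\cC(G)^{op}
\]
induces an isomorphism on higher limits for functors of the form $M\circ\xi$.

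\textbf{Step 2: left Kan extension along $\xi$.} The second step is to identify the right Kan extension of $\xi^*M$ along $\xi$, together with its derived functors. Equivalently, I would show that $\xi$ is "$\lim^*$-cofinal" for $\bbZp$-coefficients. By the Grothendieck/Quillen criterion, it is enough to check that for every $P\in\cC$ the comma category $P\downarrow\xi$ (or $\xi\downarrow P$, depending on variance) has $\bbZp$-acyclic nerve. That is, it should have the $\bbZp$-homology of a point, and this must hold compatibly with the $\mathrm{Aut}(P)=N_G(P)/P$-action.

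Unwinding the definitions, this comma category is equivalent to the Grothendieck construction of $N_G(P)/P$ acting on the poset of chains in $\cC$ whose first term contains $P$. After modding out by $P$, I expect its nerve to be a Borel-type construction
\[
E(N_G(P)/P)\times_{N_G(P)/P}|\cC_{\geq P}|.
\]
More precisely, the relevant object is the cochain complex
\[
\Hom_{N_G(P)/P}\bigl(C_*(|\cC_{>P}|\ast\text{cone}),-\bigr),
\]
where $\cC_{>P}$ denotes the subgroups in $\cC$ strictly containing $P$.

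\textbf{Step 3: the main obstacle, the acyclicity input.} Since $\cO_\cC(G)$ is an EI-category, I would filter $M$ by atomic functors concentrated on a single conjugacy class $[P]$. A five-lemma argument on long exact sequences then reduces the comparison to atomic $M=M_P$, where $M_P$ is a $\bbZp[N_G(P)/P]$-module placed at $P$. For such a functor, $\lim^*_{\cO_\cC(G)}M_P$ is the Jackowski–McClure–Oliver functor $\Lambda^*(N_G(P)/P;M_P)$. On the other side, $H^*_G(|\cC|;\cM_P)$ becomes the relative equivariant cohomology
\[
H^*_{N_G(P)/P}\bigl(|\cC_{\geq P}|,|\cC_{>P}|;M_P\bigr)
\]
of the cone pair. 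Using the standard description of $\Lambda^*$ via the Brown–Quillen complex of $p$-subgroups of $\Gamma=N_G(P)/P$, I would reduce to a $\Gamma$-homotopy equivalence:
\[
|\cC_{>P}|/P\simeq \mathcal{S}_p(\Gamma)
\]
onto the poset of nontrivial $p$-subgroups of $\Gamma$, up to $p$-radical truncation. This is where closure of $\cC$ under $p$-radical overgroups is used. The map $Q\mapsto N_Q(P)/P$, combined with Quillen's fibre lemma, should give this equivalence after restricting to $p$-radical subgroups, via Bouc's theorem that the radical subgroup complex is $G$-homotopy equivalent to the full one.

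The delicate point is making the equivalence equivariant. It must be $\Gamma$-equivariant enough that the relative equivariant cohomology with $\bbZp$ coefficients agrees with $\Lambda^*$. This requires that the fixed points under $p$-subgroups are contractible, not just that the underlying spaces are. I would handle this by invoking the equivariant Quillen fibre lemma (Thévenaz–Webb) applied to $p$-subgroup fixed points. I would then use that for $\bbZp$-modules, equivariant cohomology only sees $p$-subgroup isotropy via transfer, which I expect to be the main technical obstacle.
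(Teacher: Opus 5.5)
Your Step 3 is the right strategy. It is how the paper proves its fusion analogue, Theorem \ref{thm:Main}; the paper itself only quotes the present statement from Grodal. That proof uses the atomic filtration, Lemma \ref{lem:Part1} for the equivariant side and Lemma \ref{lem:Part2} for the limit side. However, Steps 1 and 2, which are meant to supply the comparison map, are false. Your ``resolution by $G$-orbits of simplices'' is a resolution of $\underline{\bbZp}$ by the modules $\bbZp[\Mor_{\Delta(|\cC|)_G}(\sigma,-)/N_G(\sigma)]$, and applying $\Hom(-,\cM)$ to it does give $C^*_G(|\cC|;\cM)$. But these modules are projective only when $p\nmid|N_G(\sigma)|$, which fails in general: for $\sigma=(P_0<\cdots<P_n)$ one has $P_0\le N_G(\sigma)$. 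So $\lim^*_{\Delta(|\cC|)_G}\cM$ is a Borel-type theory, not $H^*_G(|\cC|;\cM)$. For instance, take $G=C_p$, $\cC=\{1,G\}$ (closed under $p$-radical overgroups) and $M=\underline{\bbZp}$. Then $\Delta(|\cC|)_G=G\times\Delta(|\cC|)$, so $\lim^*_{\Delta(|\cC|)_G}\cM\cong H^*(BC_p\times|\cC|;\bbZp)\cong H^*(BC_p;\bbZp)$, which is nonzero in every positive even degree. By contrast, $H^*_G(|\cC|;\cM)$ (cochains on a $1$-simplex) and $\lim^*_{\cO_{\cC}(G)}M$ (since $G$ is initial in $\cO_{\cC}(G)^{op}$) are both $\bbZp$ concentrated in degree $0$. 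Hence $\xi$ is not $\lim^*$-cofinal. Your own comma category $E(N_G(P)/P)\times_{N_G(P)/P}|\cC_{\ge P}|\simeq B(N_G(P)/P)$ already shows this, since it is not $\bbZp$-acyclic once $p$ divides $|N_G(P)/P|$.

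Consequently Step 3 has no comparison map, and the five-lemma step cannot run. Abstract isomorphisms $\lim^*_{\cO_{\cC}(G)}M_P\cong\Lambda^*(N_G(P)/P;M_P)\cong H^*_G(|\cC|;\cM_P)$ on the atomic layers say nothing about a non-atomic $M$. You need three things, which the paper supplies in the fusion setting:
\begin{enumerate}
\item[(i)] A cochain map $C^*_G(|\cC|;\cM)\to C^*(\cO_{\cC}(G);M)$, natural in $M$. This is the paper's $\Psi^*$: conjugate a chain of orbit-category morphisms into a chain of inclusions $Q_0\le\cdots\le Q_n$, evaluate $f$ there, and transport back by $M(\theta_0)$. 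Equivalently, pull back along the $G$-map $E_{\cC}G\to|\cC|$ of Lemma \ref{lem:EG^K} and use Proposition \ref{pro:LimBredon}.
\item[(ii)] Exactness of the sequence of $G$-invariant cochain complexes attached to $0\to M'\to M\to M/M'\to 0$. This is not automatic, since $(-)^G$ is only left exact. It holds because each coefficient sequence has a vanishing outer term, so it splits $G$-equivariantly.
\item[(iii)] A verification that, on an atomic functor, this map agrees with the composite of your Step 3 identifications. This is the commutativity of \eqref{eqn:CommDiagram}, which the paper checks at the cochain level.
\end{enumerate}
Finally, the Jackowski--McClure--Oliver lemma is stated over $\cO_p(G)$. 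Using it over $\cO_{\cC}(G)$, like your identification of $|\cC_{>P}|$ with $|S_p(N_G(P)/P)|$, requires an $N_G(P)$-equivalence $|\cC_{>P}|\simeq|S_p(G)_{>P}|$. This is where the radical-overgroup hypothesis enters. One way to get it is to delete the non-radical $Q>P$ outside $\cC$ in order of increasing size, each $|S_p(G)_{>Q}|$ being $N_G(Q)$-contractible via $O_p(N_G(Q))$. In the paper's setting this step does not arise, because $\cC_{\ge Q}=S^e_p(G)_{\ge Q}$.
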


%The nondegenerate simplices of $|\cC|$ are the strict chains of subgroups in $\cC$.
%Passing to normalized cochains, we may take the product in the definition of $C_G^n (|\cC|; \cM)$ over the set $|\cC|_n$ of all strict chains $(P_0 <\cdots < P_n)$ with length $n$. 
Let $\cC$ be a collection satisfying the conditions of Theorem \ref{thm:IntroGrodal}, and assume that $BG$ has a mod-$p$ subgroup decomposition over $\cC$. Then as a consequence of Theorem \ref{thm:IntroGrodal} applied to $M=H^n(-)$, it is obtained that the subgroup decomposition is sharp if and only if the following sequences are exact for every $n\geq 0$: 
\begin{align}\label{eqn:GroupExactSeq-Subgroup}
0 \to H^n (G)  \to \bigoplus _{[P_0]\in |\cC|_0/G} H^n (P_0 ) ^{N_G(P_0)} \to \bigoplus _{[P_0< P_1] \in |\cC| _1/G} H^n (P_0) ^{N_G(P_0< P_1)} \to \cdots \to 0.
\end{align}

For any finite group $G$, there is a similar sequence involving the cohomology groups of the normalizers $N_G(\sigma)$ of chains of subgroups in $\cC$,
\begin{align}\label{eqn:GroupExactSeq-Normalizer}
0 \to H^n (G)  \to \bigoplus _{[P_0]\in |\cC|_0 /G} H^n (N_G(P_0)) \to \bigoplus _{[P_0< P_1] \in |\cC| _1/G} H^n ( N_G (P_0< P_1) )   \to \cdots \to 0. 
\end{align}
These sequences are exact for every $n\geq 0$ if and only if the mod-$p$ normalizer decomposition for $BG$ over $\cC$ is sharp (see \cite[Corollary 8.15]{Grodal-Higher}).

The main aim of this paper is to prove a version of Grodal's theorem for saturated fusion systems and obtain fusion system versions of the sequences in \eqref{eqn:GroupExactSeq-Subgroup} and \eqref{eqn:GroupExactSeq-Normalizer}. The sequences we obtain are exact if and only if the subgroup decomposition for the corresponding $p$-local finite group is sharp. 

To state the fusion system version of Grodal's theorem, we need to introduce more definitions. A saturated fusion system $\cF$ over a finite $p$-group $S$ is a category whose objects are subgroups of $S$, and whose morphisms satisfy certain axioms that mimic the Sylow theorems (see Definition \ref{def:Saturated}). 

\begin{definition}\label{def:OrbitFusion} For a saturated fusion system $\cF$ and a collection $\cC$ of subgroups of $S$ closed under $\cF$-conjugation, the \emph{orbit category} of $\cF$ over $\cC$ is the category $\cO _{\cC} (\cF)$ whose objects are the subgroups in $\cC$, and whose morphisms from $P$ to $Q$, where $P, Q\in \cC$, are given by $$\Mor _{\cO _{\cC} (\cF) } (P, Q) :=\mathrm{Inn} (Q) \backslash \Mor _{\cF} (P, Q)$$
where $\mathrm{Inn} (Q)$ denotes the group of inner automorphisms of $Q$ induced by conjugations by the elements in $Q$. 
\end{definition}
A subgroup $P\le S$ is \emph{$\cF$-centric} if $C_S(Q)\le Q$ for every subgroup $Q\le S$ isomorphic to $P$ in $\cF$.  The full subcategory of $\cO (\cF)$ whose objects are the $\cF$-centric subgroups of $S$ is called the \emph{centric orbit category} of $\cF$, denoted by $\cO^c (\cF)$.

Our approach for studying higher limits over the centric orbit category $\cO^c (\cF)$ is to replace this category with the fusion orbit category $\overline \cF_{\cC} (G)$ of the discrete group $G$ realizing the fusion system $\cF$. A finite $p$-subgroup $S$ of a discrete group $G$ is called a Sylow $p$-subgroup of $G$ if for every finite $p$-subgroup $P\le G$, there is an element $g\in G$ such that $gPg^{-1}\le S$. If $G$ has a Sylow $p$-subgroup $S$, then the associated fusion category $\cF_S (G)$ is defined as the category whose objects are subgroups of $S$, and whose morphisms from $P$ to $Q$, with $P, Q\le S$, are given by conjugations in $G$. More generally for a discrete group $G$ and a collection $\cC$, the fusion category of $G$ over $\cC$ is defined as the category $\cF_{\cC} (G)$ whose objects are the subgroups $H\in \cC$, and whose morphisms are given by conjugations in $G$.

\begin{definition}\label{def:FusionOrbit} Let $G$ be a discrete group and 
$\cC$ be a collection of subgroups of $G$.
The \emph{fusion orbit category} $\overline \cF _{\cC} (G)$ of $G$ is the quotient category of the orbit category $\cO_{\cC} (G)$ where the morphisms from $P$ to $Q$, with $P, Q\in \cC$, are given by
$$\Mor _{\overline \cF _{\cC} (G)} (P, Q) :=\Mor _{\cO _{\cC} (G)} (P, Q)/ C_G(P) =\mathrm{Inn}(Q)\backslash \Mor _{\cF_{\cC}} (P, Q).$$
\end{definition}

Leary-Stancu \cite{Leary-Stancu} and Robinson \cite{Robinson} showed that for every saturated fusion system $\cF$ over $S$, there is a discrete group $G$ with a Sylow $p$-subgroup $S$ such that $\cF_S(G)$ is isomorphic to $\cF$. 
In this case we say that the discrete group $G$ \emph{realizes the fusion system $\cF$}. If $G$ is a discrete group realizing a saturated fusion system $\cF$, and $\cC$ is the collection of $p$-centric subgroups in $G$, there is an equivalence of categories $\cO ^c (\cF) \simeq \overline \cF _{\cC} (G)$ (see Lemma \ref{lem:Equivalent}). Using this equivalence, we define a functor 
\[
\overline \xi: \Delta (|\cC|)_G \to  \cO ^c (\cF)^{op}
\]
as the composition of $\xi: \Delta (|\cC|)_G \to  \cO_{\cC} (G) ^{op}$ with the quotient functor $\cO _{\cC} (G)^{op} \to \overline \cF_{\cC} (G) ^{op} \simeq \cO ^c (\cF) ^{op}$. On objects, the functor $\overline \xi$ sends a chain $(P_0 \le \cdots \le P_n)$ in $|\cC|_n$ to $P'_0$ where $P_0'$ is the chosen subgroup of $S$ conjugate to $P_0$.

The main theorem of the paper is the following:

\begin{theorem}\label{thm:Main}
Let $G$ be a discrete group realizing a saturated fusion system $\cF$, and $\cC$ denote the collection of all $p$-centric subgroups in $G$. Suppose that $M$ is a $\bbZ_{(p)}\cO ^c \cF$-module, and $\cM$ is the equivariant coefficient system on $|\cC|$ defined by precomposing $M$ with $\overline \xi: \Delta (|\cC|)_G \to \cO ^c (\cF)^{op}$. 
Then, there is an isomorphism  $$\underset{\cO ^c (\cF) }{\lim {}^*} M \cong H_ G ^* (|\cC| ; \cM ).$$ 
\end{theorem}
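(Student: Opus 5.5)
Since $\cO^c(\cF) \simeq \overline\cF_{\cC}(G)$ by Lemma \ref{lem:Equivalent}, higher limits over $\cO^c(\cF)$ agree with higher limits over $\overline \cF_{\cC}(G)$. So the plan is to work entirely with the discrete group $G$ and compare three things: higher limits over $\overline\cF_{\cC}(G)$, higher limits over $\cO_{\cC}(G)$, and equivariant cohomology of $|\cC|$. Write $\pi:\cO_{\cC}(G)\to \overline\cF_{\cC}(G)$ for the quotient functor and $\pi^*M$ for the pulled-back $\bbZp\cO_{\cC}(G)$-module. By construction $\cM = \xi^*(\pi^* M)$. The theorem then splits into two isomorphisms:
\[
\lim{}^*_{\overline\cF_{\cC}(G)} M \cong \lim{}^*_{\cO_{\cC}(G)} \pi^*M \cong H^*_G(|\cC|;\cM).
\]

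\textbf{First isomorphism.} I would use the Grothendieck spectral sequence for right Kan extension along $\pi$. The functor $\pi$ is the identity on objects and is surjective on morphism sets. Its fibers are the orbits of the right action of $C_G(P)$ on $\Mor_{\cO_{\cC}(G)}(P,Q)$, and since $P$ acts trivially, this action factors through $C_G(P)/Z(P)$.

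Because $P$ is $p$-centric, $Z(P)$ is Sylow in $C_G(P)$. I first need to check that $C_G(P)/Z(P)$ is a $p'$-group in a suitable sense for discrete $G$; for the Leary--Stancu/Robinson groups one may need that $C_G(P)$ has finite $p$-part controlled by $Z(P)$. Granting this, the higher derived functors of the Kan extension $R^i\pi_*$ applied to $\pi^*M$ are computed by cohomology of these $p'$-type groups with $\bbZp$ coefficients. These vanish for $i>0$, and $\pi_*\pi^*M = M$. Equivalently, one can show that $\pi^*$ sends injectives to injectives, or that restricting the standard projective resolution of $\underline{\bbZp}$ is acyclic. The spectral sequence then collapses.

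The delicate point is that $C_G(P)$ may be infinite, so "$p'$-group" must be replaced by the statement that $H^i(C_G(P)/Z(P);N)=0$ for $i>0$ and $\bbZp$-modules $N$ coming from $M$. I expect to need the structure of the realizing group $G$: it is a tree of groups (an amalgam of finite groups), so centralizers of centric subgroups are tractable. Alternatively, I would compare directly with $\cO^c(\cF)$ via a different model.

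\textbf{Second isomorphism.} This is the discrete-group version of Theorem \ref{thm:IntroGrodal}. I would run Grodal's argument: the equivariant cohomology $H^*_G(|\cC|;\xi^*N)$ is $\lim^*_{\Delta(|\cC|)_G}\xi^*N$. So it suffices to show that $\xi$ induces an isomorphism on higher limits, by checking that the relevant comma/fiber categories of $\xi$ are $\bbZp$-acyclic. The fiber of $\xi$ over $P$ is essentially the poset of chains in $\cC$ starting above $P$. Since $P\in\cC$ and the $p$-centric collection is closed under $p$-overgroups, this poset has a minimal element and is contractible. Finiteness is not needed for this step, only that $\cC$ is upward closed.

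\textbf{Main obstacle.} The step I expect to be hardest is the first one: the vanishing of the higher derived functors of $\pi_*$ for a possibly infinite discrete $G$. I would attack it through the explicit construction of $G$ as a free amalgam of finite groups containing $S$, where centralizers of $p$-centric subgroups can be analyzed concretely.
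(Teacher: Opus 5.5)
\textbf{There is a genuine gap.} The factorization through $\lim^*_{\cO_{\cC}(G)}\pi^*M$ is where your argument breaks. For infinite $G$, neither of your two isomorphisms holds. Infinite $G$ cannot be avoided when $\cF$ is exotic, and in any case the theorem is stated for every realizing group.

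The paper's Example~\ref{ex:InfiniteGroup} refutes both steps at once. There $G=C_3\rtimes\bbZ$ is exactly the Leary--Stancu model for the $3$-fusion system of $S_3$. We have $\cC=\{S\}$ with $S=\langle b\rangle$, and $C_G(S)/Z(S)=(S\times\langle a^2\rangle)/S\cong\bbZ$. For the constant module $M=\underline{\bbZ_{(3)}}$ one gets
\[
\underset{\cO^c(\cF)}{\lim{}^1} M=H^1(C_2;\bbZ_{(3)})=0,\qquad
\underset{\cO_{\cC}(G)}{\lim{}^1} \pi^*M=H^1(\bbZ;\bbZ_{(3)})\cong\bbZ_{(3)},\qquad
H^1_G(|\cC|;\cM)=0 .
\]

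\emph{Step 1.} The group $C_G(P)/Z(P)$ is indeed $p$-torsion-free. However, an infinite $p$-torsion-free group need not have vanishing $\bbZ_{(p)}$-cohomology, so here $R^1\pi_*\pi^*M\neq 0$. Since this $G$ is itself a graph of finite groups, no analysis of the amalgam structure can rescue the vanishing you need.

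\emph{Step 2.} Your claim that finiteness is not needed is exactly what the example disproves. The argument also rests on a misidentification. $H^*_G(|\cC|;\cM)$ is the cohomology of \emph{strictly} $G$-invariant cochains. By contrast, $\lim^*_{\Delta(|\cC|)_G}\cM$ is a Borel-type object that also sees the cohomology of the simplex stabilizers. For a point with $G$ acting on $M$, it gives $H^*(G;M)$ rather than $M^G$ concentrated in degree $0$, and these differ already for $G$ a nontrivial finite $p$-group and $M=\bbZ_{(p)}$. Correspondingly, the comma categories of $\xi$ are not bare posets of chains. They are Grothendieck constructions carrying the group action; in the example the relevant one has the homotopy type of $BS$. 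Grodal's theorem genuinely uses finiteness of $G$ and closure under radical overgroups.

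\textbf{What is missing} is a way to avoid every derived construction over the infinite group $G$ itself. Your two errors cancel in the composite, as the example shows, but the intermediate object $\lim^*_{\cO_{\cC}(G)}\pi^*M$ has to be bypassed. The paper does this as follows.
\begin{enumerate}
\item It writes down a direct cochain map $\Psi^*$ from $C^*_G(|\cC|;\cM)$ to the bar cochains of $\cO^c(\cF)$.
\item It reduces to atomic functors by filtering $M$ and applying the five lemma.
\item For $M$ atomic at a fully normalized $Q$, it identifies both sides with $H^*_\Gamma(|S^e_p(\Gamma)|,|S_p(\Gamma)|;M(Q))$ for the \emph{finite} group $\Gamma=\Out_{\cF}(Q)$.
\end{enumerate}
On the fusion side, step 3 uses Proposition~\ref{pro:HigherFusionOrbit} and Grodal's comparison for $\Gamma$. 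On the group side, one reduces to $W=N_G(Q)/Q$. The possibly infinite normal subgroup $K=QC_G(Q)/Q$ is then removed simply by passing to $K$-orbits. This is legitimate exactly because $K$ acts trivially on $M(Q)$ and the cochains are strict invariants. It is combined with the identification $|S_p(W)|/K\cong|S_p(W/K)|$ of Proposition~\ref{pro:Quotient}, which is where Libman's Sylow result and Lemma~\ref{lem:Extension} are needed.
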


Theorem \ref{thm:Main} is proved by using a proposition by Broto, Levi, and Oliver \cite[Proposition 3.2]{BLO2}. For a finite group $\Gamma$ and a $\bbZ_{(p)} \Gamma$-module $M$, let $\Lambda ^* (\Gamma, M)$ denote the group of higher limits $\lim ^* _{\cO _p (\Gamma)} F_M$ where $F_M$ is the atomic functor concentrated at the trivial group with $F_M(1)=M$. Broto, Levi, and Oliver \cite[Proposition 3.2]{BLO2} prove that if $M: \cO ^c (\cF)^{op}\to \bbZ_{(p)}$-Mod is an atomic functor concentrated at $Q\in \cF^c$, then the higher limit $\lim ^* _{\cO ^c (\cF)} M$ is isomorphic to $\Lambda ^* (\Out _{\cF} (Q), M(Q) )$, where $\Out _{\cF} (Q):=\Aut _{\cF } (Q)/\Inn (Q)$. 

To prove the isomorphism in Theorem \ref{thm:Main}, 
we construct a natural map $\Psi^*: H_ G ^* (|\cC| ; \cM )
\to \lim^* _{\cO^c (\cF)} M$,
and prove that it is an isomorphism when $M$ is atomic by identifying both sides with
$\Lambda^*(\Out_{\cF}(Q),M(Q))$.  
The general case is proved by induction, using a filtration of $M$ with sections isomorphic to atomic functors.

By a theorem of Chermak \cite{Chermak-2013}, every saturated fusion system $\cF$ over $S$ has an associated centric linking system $\cL$, unique up to isomorphism (see \cite[Definition 1.7]{BLO2}). The triple $(S,\cF,\cL)$ is
called a $p$-local finite group associated to $\cF$.
The classifying space of a $p$-local finite group $(S, \cF, \cL)$ is defined as the $p$-completion $|\cL|_p ^{\wedge}$ of the geometric realization of $\cL$. There is a subgroup decomposition 
\[
|\cL| \simeq \underset{\cO ^c (\cF) } \hocolim \widetilde B
\]
for the centric linking system $\cL$ where $\widetilde B: \cO^c (\cF) \to Top$ is a homotopy lifting of the homotopy functor $P\to BP$ \cite[Proposition 2.2]{BLO2}.  The Bousfield-Kan spectral sequence associated to this homology decomposition is of the form
\[
E_2 ^{p,q}=\underset{\cO ^c (\cF)}{\lim {}^p} H^q (-) \Rightarrow H^{p+q} (|\cL | ; \bbF_p).
\]
\begin{definition}\label{def:BFSharp} The subgroup decomposition for $\cF$ over $\cF$-centric subgroups is \emph{sharp} if 
\[
\underset{\cO^c (\cF) } 
{\lim {}^i} H^n (-)=0
\]
for all $i>0$ and $n \geq 0$. 
\end{definition}

It is conjectured by Diaz and Park \cite[Conjecture]{DiazPark} that for every saturated fusion system $\cF$, the subgroup decomposition for $\cF$ over $\cF$-centric subgroups is sharp. 
In fact, they conjectured that the higher limits over the centric orbit category vanish for the contravariant part of a Mackey functor for $\cF$ with coefficients in $\bbF_p$, defined on the full orbit category of $\cF$. They proved their conjecture for fusion systems realized by finite groups (see \cite[Theorem B]{DiazPark}). The sharpness of the subgroup decomposition has been proved in many cases (see, for example, \cite{DiazPark}, \cite{GrazianMarmo}, \cite{GlaubermanLynd}, \cite{Bova-2026}), but the general case is still open.

As a consequence of Theorem \ref{thm:Main}, we have the following corollary:

\begin{corollary}\label{cor:Main}
Let $\cF$ be a saturated fusion system over a finite $p$-group $S$, and $\cC$ be the collection of all $\cF$-centric subgroups of $S$. Then, the subgroup decomposition for $\cF$ over $\cF$-centric subgroups is sharp if and only if the following sequence is exact for all $n\geq 0$:
\begin{align*} 
\quad 0 \to H^n (\cF )  \to \bigoplus _{[P_0] \in |\cC|_0/\cF} H^n (P_0) ^{\Aut _\cF(P_0)} \to \bigoplus _{[P_0< P_1] \in |\cC| _1/\cF} H^n (P_0) ^{\Aut_\cF(P_0< P_1)} \to \cdots \to 0.
\end{align*}
\end{corollary}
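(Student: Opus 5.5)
Fix $n\ge 0$ and apply Theorem \ref{thm:Main} with $M=H^n(-)$, viewed as a $\bbZ_{(p)}\cO^c(\cF)$-module; this is legitimate because $H^n(P;\bbF_p)$ is an $\bbF_p$-vector space and inner automorphisms act trivially on cohomology. By Leary--Stancu or Robinson, choose a discrete group $G$ realizing $\cF$, and let $\cC_G$ be the collection of $p$-centric subgroups of $G$. Theorem \ref{thm:Main} then gives $\lim^i_{\cO^c(\cF)} H^n(-)\cong H^i_G(|\cC_G|;\cM)$. Hence sharpness, meaning vanishing for all $i>0$ and all $n$, is equivalent to the statement that $H^*_G(|\cC_G|;\cM)$ is concentrated in degree $0$, and there it equals $\lim_{\cO^c(\cF)}H^n(-)$.

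The next step is to write the normalized cochain complex computing $H^*_G(|\cC_G|;\cM)$ explicitly. The equivariant cochains in degree $k$ are $\prod_{\sigma\in |\cC_G|_k} \cM(\sigma)$ taken with $G$-invariants. Decomposing into $G$-orbits of nondegenerate simplices gives $\bigoplus_{[\sigma]\in|\cC_G|_k/G}\cM(\sigma)^{G_\sigma}$, where $G_\sigma=N_G(P_0<\cdots<P_k)$ is the stabilizer of the chain. Here $\cM(\sigma)=H^n(P_0')$, and $G_\sigma$ acts through $\overline\xi$, that is, through conjugation on $P_0$.

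We then translate this complex to $\cF$. The $p$-centric subgroups of $G$ correspond to $\cF$-centric subgroups of $S$: each is $G$-conjugate into $S$, and a subgroup $P\le S$ is $p$-centric in $G$ exactly when it is $\cF$-centric, as in Lemma \ref{lem:Equivalent}. Every chain in $\cC_G$ is $G$-conjugate to a chain inside $S$, since its top term is a $p$-group conjugate into $S$. Two chains in $S$ are $G$-conjugate iff they are $\cF$-conjugate, because $\cF=\cF_S(G)$. This gives a bijection $|\cC_G|_k/G\cong|\cC|_k/\cF$. For a chain inside $S$, $G_\sigma$ acts on $H^n(P_0)$ through its image in $\Aut(P_0)$, and this image is exactly $\Aut_\cF(P_0<\cdots<P_k)$, the automorphisms of $P_0$ extending to $\cF$-automorphisms of the whole chain. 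The step needing the most care is this one: one must check that the restriction to $P_0$ of the image of $N_G(\sigma)$ in $\Aut_\cF$ is precisely the group written $\Aut_\cF(P_0<P_1)$ etc., and that the coboundary maps become the alternating sums of restriction maps and induced maps on invariants.

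Finally, we identify degree $0$. The kernel of the first map is $\lim_{\cO^c(\cF)}H^n(-)$, i.e.\ the $\cF$-stable elements restricted to centric subgroups. Because $S$ is centric and every $\cF$-morphism between centric subgroups is accounted for, this kernel equals $H^n(\cF)$, the ring of $\cF$-stable elements of $H^n(S)$ (equivalently, by Alperin-type arguments, stability can be checked on centric subgroups). The sequence in the corollary is therefore the augmented cochain complex. It is exact at $H^n(\cF)$ and at the first term always, and it is exact elsewhere iff $H^i_G(|\cC_G|;\cM)=0$ for $i>0$. By the first paragraph, this holds for all $n$ iff the decomposition is sharp.
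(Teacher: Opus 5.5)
Your proposal is correct and follows essentially the paper's route. The paper first proves Proposition \ref{pro:Main}: apply Theorem \ref{thm:Main}, split the $G$-invariant cochains into $\bigoplus_{[\sigma]} M(P_0)^{N_G(\sigma)}$, use the orbit bijection of Lemma \ref{lem:IsomQuotients}, and note that $C_G(P_n)$ acts trivially so the invariants are $M(P_0)^{\Aut_\cF(\sigma)}$; it then specializes to $M=H^n(-)$ and prepends $\lim_{\cO^c(\cF)} H^n(-)\cong H^n(\cF)$, exactly as you do. Your Alperin-type justification of that last identification supplies a step the paper only asserts, and reading $\Aut_\cF(\sigma)$ through its restrictions to $P_0$ gives the same invariants as the paper's $N_G(\sigma)/C_G(P_n)$.
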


This sequence can be considered as the fusion system version of the sequence given in \eqref{eqn:GroupExactSeq-Subgroup}.

In Section \ref{sect:NormalizerDec}, we discuss the normalizer decompositions for $p$-local finite groups, introduced by Libman \cite{Libman-NormDec}, and obtain a sequence of cohomology groups analogous to the normalizer sequence given in \eqref{eqn:GroupExactSeq-Normalizer}. Let $(S, \cF, \cL)$ be a $p$-local finite group. The \emph{normalizer decomposition} for $\cF$ is a homotopy equivalence 
\[
\hocolim _{\overline sd (\cF^c)} \delta \xrightarrow{} |\cL |
\]
where $\delta: \overline sd (\cF^c ) \to Spaces$ is a functor that sends an $\cF$-conjugacy class $[\sigma]$ of a chain $\sigma =(P_0<\dots < P_n)$ to a space $\delta ([\sigma])$ homotopy equivalent to $B\Aut _{\cL} (\sigma)$ (see Section \ref{sect:NormalizerDec} for details).
The normalizer decomposition for $\cF$ is said to be \emph{sharp} if \[
\underset{\overline sd(\cF^c)}{\lim {}^i} H^n ( B\Aut _{\cL} (\sigma))=0
\]
for all $i>0$ and $n\geq 0$. 

For a chain $\sigma =(P_0<\cdots< P_n)$ of subgroups of $S$, let $N_{\cF} (\sigma)$ denote the normalizer fusion system for $\sigma$ (see Definition \ref{def:NormFusion}).
In Section \ref{sect:NormalizerDec}, we show that for every fully $\cF$-normalized chain $\sigma$ of $\cF$-centric subgroups of $S$, there is an isomorphism $H^*(B\Aut_{\cL} (\sigma))\cong H^*(N_{\cF} (\sigma))$.
In \cite[Theorem 1.9]{Yalcin-Higher}, we proved that  over the collection of $\cF$-centric subgroups, the subgroup decomposition for $\cF$ is sharp if and only if the normalizer decomposition for $\cF$ is sharp. As a consequence of these, we prove the following: 

\begin{corollary}\label{cor:MainNorm}
Let $\cF$ be a saturated fusion system over $S$, and $\cC$ be the collection of all $\cF$-centric subgroups in $S$. Then the subgroup decomposition for $\cF$ over $\cC$ is sharp if and only if the following sequence is exact for every $n \geq 0$:
\begin{align*} 
\quad 0 \to H^n (\cF )  \to \bigoplus _{[P_0] \in |\cC|_0/\cF} H^n (N_\cF(P_0))\to \bigoplus _{[P_0< P_1] \in |\cC| _1/\cF} H^n (N_\cF(P_0< P_1)) \to \cdots \to 0
\end{align*}
where the sums are taken over fully $\cF$-normalized representatives.
\end{corollary}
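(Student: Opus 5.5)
The plan is to combine two equivalences stated earlier in the paper: the sharpness criterion of Corollary \ref{cor:Main}, and the equivalence (from \cite[Theorem 1.9]{Yalcin-Higher}) between sharpness of the subgroup and normalizer decompositions over $\cF$-centric subgroups. It will be enough to show that sharpness of the normalizer decomposition is equivalent to exactness of the displayed sequence.

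First I identify the sequence with the cochain complex computing higher limits over $\overline{sd}(\cF^c)$. The category $\overline{sd}(\cF^c)$ is a poset-like category of $\cF$-conjugacy classes of strict chains, with a morphism $[\sigma]\to[\tau]$ when $\tau$ is conjugate to a subchain of $\sigma$. It is a heighted EI-category whose endomorphisms are trivial, so it is a poset. For any functor $F$ on it, $\lim^*F$ is computed by the normalized cochain complex of its nerve. Because of the height structure, the standard argument (as in Grodal, or Libman's computation for normalizer decompositions) collapses this complex to one with terms $\prod_{[\sigma]\in|\cC|_k/\cF}F([\sigma])$. The differentials are alternating sums of the restrictions along face inclusions.

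Applying this to $F=H^n(B\Aut_\cL(-))$ and using the isomorphism $H^*(B\Aut_\cL(\sigma))\cong H^*(N_\cF(\sigma))$ for fully normalized representatives (proved in Section \ref{sect:NormalizerDec}), I get exactly the terms of the displayed sequence from degree $0$ onward. The normalizer decomposition is a homotopy equivalence with $|\cL|$, and $H^*(|\cL|)\cong H^*(\cF)$. Hence the edge map of its Bousfield--Kan spectral sequence gives the augmentation $H^n(\cF)\to\lim^0$. This augmentation is compatible with the restriction maps $H^n(\cF)\to H^n(N_\cF(P_0))$.

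Next I show that exactness is equivalent to sharpness. If the decomposition is sharp, the spectral sequence collapses onto the $0$-column, so $H^n(\cF)\cong\lim^0 H^n$. Together with $\lim^i=0$ for $i>0$, this is exactness of the augmented complex. Conversely, exactness at positive degrees gives $\lim^i=0$ for $i>0$, which is sharpness. Exactness at the first two spots is then automatic, but it is in any case implied.

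The main obstacle is checking that the cochain differentials really agree with the maps in the sequence after the identifications with $H^*(N_\cF(\sigma))$. The identifications depend on choosing fully normalized representatives. Restriction along a face $\tau\subset\sigma$ then requires an $\cF$-morphism conjugating $\tau$ into its chosen fully normalized representative. I would handle this by checking that the isomorphism $H^*(B\Aut_\cL(\sigma))\cong H^*(N_\cF(\sigma))$ is natural with respect to the inclusions $\Aut_\cL(\sigma)\to\Aut_\cL(\tau)$ and to conjugation by morphisms in $\cL$. Since both sides are stable elements in $H^*(N_S(\sigma))$, this should reduce to restriction of stable elements. It then follows that the sequence is independent of choices up to isomorphism.
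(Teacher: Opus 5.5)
Your proposal is correct and follows essentially the paper's route: reduce to sharpness of the normalizer decomposition via \cite[Theorem 1.9]{Yalcin-Higher} (Theorem \ref{thm:Yalcin}), compute $\lim^*_{\overline{s}d(\cF^c)}$ by a cochain complex indexed by $\cF$-conjugacy classes of chains, and substitute $H^*(\Aut_{\cL}(\sigma))\cong H^*(N_{\cF}(\sigma))$ at fully normalized representatives. The differences are cosmetic. The paper gets the chain-class complex by passing to $Sd_{\cC}(G)/G$ for a realizing group $G$ and applying Slominska's result (Proposition \ref{pro:Slominska}), rather than arguing with heights directly on $\overline{s}d(\cF^c)$. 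It also simply transports the differentials along the chosen isomorphisms instead of verifying their naturality, and your appeal to Corollary \ref{cor:Main} is not actually needed.
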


The sequences given in Corollaries \ref{cor:Main} and \ref{cor:MainNorm} provide alternative approaches for proving the sharpness over the collection of $\cF$-centric subgroups.
In the cases where the subgroup decomposition over the $\cF$-centric subgroups is sharp, they provide exact sequences for calculating the cohomology of the fusion system $\cF$.

\vskip 10 pt
\noindent
{\bf Notation and Conventions:}
When we say $\cC$ is a collection of subgroups of $G$, it always means that $\cC$ is closed under $G$-conjugation. $R$ always denotes a commutative ring with unity. $H^*(-)$ denotes cohomology with $\mathbb F_p$-coefficients unless otherwise specified. This paper is written from the point of view that a space is a simplicial set. Throughout the paper, all groups are discrete groups, all $p$-groups are finite, and all fusion systems are saturated. 

\vskip 10 pt
\noindent
{\bf Acknowledgments:}
I thank Caroline Yal\c c\i n for reading the paper, commenting on the style, and pointing out typos and grammatical errors. 

\vskip 10 pt
\noindent
{\bf Declaration of AI usage:}
The results in this paper are obtained without AI usage, and the paper is written by the author. After preparing the first draft, ChatGPT (GPT-6 Astra) was used to proofread the paper and for checking the proofs. Based on the feedback from ChatGPT, I revised Section \ref{sect:ProofMain} and added a detailed proof of the commutativity of the diagram in \eqref{eqn:CommDiagram}. I take full responsibility for the final version.

%-----------------

\section{Modules over the orbit category and Bredon cohomology}\label{sect:ModulesAndUniversal}

This is a preliminary section where we introduce definitions for the higher limits over the orbit category, universal space $E_{\cC}G$, and Bredon cohomology. We also state some results that will be used later in the paper.

\subsection{Higher limits over the orbit category}\label{Sect:HigherLimits}
Let $\bC$ be a small category and $R$ be a commutative ring with unity. A functor $M: \bC ^{op}\to R$-Mod is called a (contravariant) $R\bC$-module. 
The category of $R\bC$-modules is an abelian category with enough projective and injective modules. For $R\bC$-modules 
$M$ and $N$, the ext-groups $\Ext^i _{R\bC} (M, N)$ for $i\geq 0$ are defined in the usual way, using projective resolutions for $M$ or injective resolutions for $N$.

Let $\underline R$ denote the constant $R\bC$-module. For every $i\geq 0$, the cohomology $H^i (\bC; M)$ of the small category $\bC$ with coefficients in the $R\bC$-module $M$ is defined to be the ext-group $\Ext ^i _{R\bC} (\underline R, M)$. For every $i\geq 0$, the higher limit $\lim ^i _{\bC} M$ of an $R\bC$-module $M$ is defined as the $i$-th right derived functor of the limit functor $\lim _{\cC} (-)$. Note that $H^* (\bC; M)\cong \lim ^* _{\bC} M$. 

Let $G$ be a discrete group and $\cC$ be a collection of subgroups of $G$.
Modules over the orbit category $\cO _{\cC} (G)$, and the higher limits $\lim^i _{\cO_{\cC} (G)} M$ are defined by taking $\bC=\cO_{\cC} (G)$ in the above definitions. For each $H \in \cC$, let $R[G/H^?]$ denote the 
$R\cO _{\cC} (G)$-module that sends each $K\in \cC$ to the $R$-module
$$R[(G/H)^K ]\cong R\Map_G (G/K, G/H) \cong R\Mor _{\cO _{\cC} (G)} (K, H).$$
An $R\cO_{\cC} (G)$-module which is a direct sum of modules of the form $R[G/H^?]$ is called a free $R\cO _{\cC} (G)$-module. By the Yoneda lemma, free $R\cO _{\cC} G$-modules are projective, and they provide enough projectives for constructing projective resolutions for any $R\cO_{\cC}(G)$-module.

%------------------

\subsection{Universal space over a collection}\label{sect:Universal}

Let $G$ be a discrete group and $\cC$ be a collection of subgroups of $G$.
The collection $\cC$ is closed under conjugation, but we do not assume that it is closed under passing to subgroups.

\begin{definition} Consider the category $\cE _{\cC} G$ whose objects are the pairs $(G/H, x)$ where $H\in \cC$ and $x\in G/H$, and whose morphisms $(G/H, x) \to (G/K, y) $ are given by the $G$-map $f: G/H \to G/K$ such that $f(x)=y$ .
The \emph{universal space} $E_{\cC} G$ is defined as the nerve of the category $\cE _{\cC} G$.
\end{definition}

The category $\cE_{\cC} G$ is obtained by applying the Grothendieck construction to the functor $F : \cO_{\cC} (G) \to G$-Sets that sends $H\in \cC$ to the transitive $G$-set $G/H$. By Thomason's homotopy colimit theorem \cite[Theorem 1.2]{Thomason-1979}, the classifying space $E_{\cC} G$ is homotopy equivalent to the homotopy colimit $\hocolim _{\cO_{\cC} G} F$. The following result is useful for characterizing $E_{\cC} G$ as a $G$-space.
 
\begin{lemma}[{\cite[Proposition 2.14]{Dwyer-HomDec}}]\label{lem:EG^K}
Consider the functor $f: \cE _{\cC} G \to \cC$ that sends each object $(G/H, gH)$ in $\cE_{\cC} G$ to ${}^gH:=gHg^{-1}$ in $\cC$. The realization of $f$ gives a $G$-map $E_{\cC} G \to |\cC|$ that has the property that for every subgroup $K \le G$, the $N_G(K)/K$-equivariant map $$(E_{\cC} G)^K \to |\cC_{\geq K}| $$ is a homotopy equivalence of spaces.  
\end{lemma}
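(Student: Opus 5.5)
The plan is to work entirely at the level of categories. The $G$-action on $E_{\cC}G$ comes from a $G$-action on $\cE_{\cC}G$, and the map in question is the nerve of $f$. So it will be enough to identify the $K$-fixed subcategory of $\cE_{\cC}G$ and to show that $f$ restricts to an equivalence of categories from it onto the poset $\cC_{\geq K}$.

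\emph{Step 1: morphisms in $\cE_{\cC}G$.} The group $G$ acts on $\cE_{\cC}G$ by $k\cdot (G/H,x)=(G/H,kx)$ and fixes every morphism $f$. The relevant condition is that a $G$-map $G/H\to G/L$ with $gH\mapsto g'L$ exists if and only if $H\le g^{-1}g'L g'^{-1}g$, which is equivalent to ${}^gH\le {}^{g'}L$. Such a map is unique, because a $G$-map out of $G/H$ is determined by the image of one point. Hence $\cE_{\cC}G$ is a preorder: there is at most one morphism $(G/H,gH)\to(G/L,g'L)$, and there is one exactly when ${}^gH\le {}^{g'}L$. This shows that $f$ is a well-defined functor to the poset $\cC$. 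It is also $G$-equivariant, since $(G/H,kgH)\mapsto {}^{kg}H$, with $G$ acting on $\cC$ by conjugation.

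\emph{Step 2: fixed points of the nerve.} An $n$-simplex of the nerve is a string of $n$ composable morphisms. It is fixed by $K$ exactly when all of its objects and morphisms are $K$-fixed. Since morphisms are unique between any two objects, this happens exactly when all objects are $K$-fixed. Therefore $(E_{\cC}G)^K$ is the nerve of the full subcategory $(\cE_{\cC}G)^K$ on the objects $(G/H,gH)$ with $kgH=gH$ for all $k\in K$, that is, with $K\le {}^gH$.

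\emph{Step 3: the equivalence.} The functor $f$ sends $(\cE_{\cC}G)^K$ into $\cC_{\geq K}$. It is surjective on objects, because $P\in\cC_{\geq K}$ is the image of $(G/P,eP)$. By Step 1 it is fully faithful, since both sides are preorders with a morphism exactly when ${}^gH\le{}^{g'}L$. Thus $f^K$ is an equivalence of categories. Realizing the equivalence gives a homotopy equivalence $(E_{\cC}G)^K\to|\cC_{\geq K}|$. This map is $N_G(K)$-equivariant because $f$ is $G$-equivariant and $N_G(K)$ preserves both fixed-point subcategories; $K$ acts trivially on both sides, so the map is $N_G(K)/K$-equivariant.

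The only point needing care is the bookkeeping in Steps 1 and 2. Specifically, one must check that fixed points of a nerve are the nerve of the fixed subcategory, and that uniqueness of morphisms makes all morphisms between fixed objects automatically fixed. Once those are verified, the rest is formal.
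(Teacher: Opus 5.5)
Your proof is correct and takes essentially the same route as the paper. The paper only cites Dwyer for this lemma, and the argument behind it is the equivalence of categories you establish: the section $s\colon P\mapsto (G/P,eP)$ on $\cC_{\geq K}$ satisfies $f\circ s=\id$ and $s\circ f\cong\id$, which is your ``fully faithful and surjective on objects'' check, valid because $\cE_{\cC}G$ is a preorder with a morphism exactly when ${}^gH\le {}^{g'}L$.
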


% \begin{proof}
% Consider the functor $f: \cE _{\cC} G \to \cC$ that sends $(G/H, gH)$ to $gHg^{-1}$ for every  object $(G/H, gH)$ in $\cE_{\cC} G$. For every $K\le G$, $(G/H , gH)$ is fixed by $K$ if $g^{-1} K g \le H$. Hence the image of $(\cE _{\cC} G)^K$ lies in $\cC _{\geq K}$. The functor $g : \cC\to \cE _{\cC} G$ defined by $H \to (G/H, H)$ sends $\cC_{\geq K}$ to $(\cE _{\cC} G) ^K $. It is easy to see that $f\circ g=\id$ and that $g \circ f$ is naturally equivalent to the identity functor. Hence  $f$ induces a homotopy equivalence $(E_{\cC} G)^K \to |\cC_{\geq K}|$. It is clear from the definitions that for every $K\le G$, the map $f$ restricted to $(E_{\cC} G)^K$ is an $N_G(K)/K$-equivariant map. %When $K \in \cC$, the realization of the poset $\cC_{\geq K}$ is contractible since it has a minimum element. 
% \end{proof}

Note that since $E _{\cC} G$ is a $G$-simplicial set, the topological realization of $E_{\cC} G$ has a structure of a $G$-CW-complex. 
We denote the topological realization of $E_{\cC} G$ also by $E_{\cC} G$. As a $G$-CW-complex, the universal space $E_{\cC} G$ has the following properties.

\begin{lemma}\label{lem:Properties} Let $G$ be a discrete group and $\cC$ be a collection of subgroups of $G$. Then,
\begin{enumerate}
\item all isotropy subgroups of cells in $E_{\cC}G $ are in the collection $\cC$, and
\item for every $K \in \cC$, the fixed point set $(E_{\cC} G) ^K$ is contractible.
\end{enumerate}
\end{lemma}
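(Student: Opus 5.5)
Both parts follow from the simplicial structure of $E_{\cC}G$ as the nerve of $\cE_{\cC}G$, together with the fixed-point description in Lemma \ref{lem:EG^K}. The topological realization of a $G$-simplicial set is a $G$-CW-complex whose open cells are in bijection with the nondegenerate simplices. The isotropy group of a cell is the stabilizer of the corresponding nondegenerate simplex, because a group element that fixes a point in the interior of the realized simplex must fix the simplex itself.

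For part (1), I will compute these stabilizers. An $n$-simplex of $E_{\cC}G$ is a chain of composable morphisms
\[
(G/H_0, x_0) \to (G/H_1, x_1) \to \cdots \to (G/H_n, x_n)
\]
in $\cE_{\cC}G$. Here the underlying $G$-maps $f_i: G/H_{i-1}\to G/H_i$ are fixed, and $G$ acts only on the points $x_i$. Since the $f_i$ are $G$-maps satisfying $f_i(x_{i-1})=x_i$, an element $g$ fixes the simplex if and only if $gx_0=x_0$. So the stabilizer is $G_{x_0}$, which is a conjugate of $H_0$. Since $\cC$ is closed under conjugation, $G_{x_0}\in\cC$, and this proves (1).

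For part (2), let $K\in\cC$. Lemma \ref{lem:EG^K} gives a homotopy equivalence $(E_{\cC}G)^K \simeq |\cC_{\geq K}|$. The poset $\cC_{\geq K}$ has $K$ as its minimum element, so it is a cone on $K$: the natural transformation $K\le P$ from the constant functor to the identity functor gives a contraction of the nerve. Hence $(E_{\cC}G)^K$ is contractible.

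A little care is needed because part (2) concerns the realization rather than the simplicial set. Taking fixed points commutes with geometric realization for simplicial $G$-sets, since fixed points are computed levelwise. The homotopy equivalence of simplicial sets from the lemma therefore realizes to a homotopy equivalence of the fixed point spaces. The only other point needing justification is the claim that cell isotropy equals simplex stabilizer, and that is the standard fact stated in the first paragraph. Neither step is a genuine obstacle.
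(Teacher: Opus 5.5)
Your proof is correct and is essentially the paper's argument. The paper likewise shows that a simplex's stabilizer equals the stabilizer $g_0H_0g_0^{-1}$ of its initial point, and it deduces contractibility of $(E_{\cC}G)^K$ from Lemma \ref{lem:EG^K}, since $\cC_{\geq K}$ has minimum element $K$. Your extra remarks on cell isotropy versus simplex stabilizers and on fixed points commuting with realization are harmless, but not needed: Lemma \ref{lem:EG^K} is already stated for the realization.
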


\begin{proof} $E_{\cC} G$ is a $G$-CW-complex whose cells are the nondegenerate simplices in the nerve of $\cE_{\cC} G$. Such simplices are given by a sequence of maps $$(G/H_0, g_0H_0 )\maprt{f_1} (G/H_1, g_1 H_1) \maprt{f_2} \cdots \maprt{f_n} (G/H_n , g_n H_n )$$ with non-identity morphisms $f_i$ between the objects of $\cE_{\cC} G$. 
Since for each $i \in \{0, \dots, n-1\}$, we have $f_{i+1} (g_i H_i) =g_{i+1} H_{i+1}$, the stabilizer of such a chain is equal to the stabilizer of the first term which is 
$g_0 H_0 g_0 ^{-1}$. Note that since $\cC$ is closed under conjugation, this subgroup lies in $\cC$. Hence the stabilizers of the cells of $E_{\cC}G$ lie in $\cC$. 

For every $K \in \cC$, the simplicial complex $|\cC_{\geq K}|$ is contractible since it has a smallest element. Hence by Lemma \ref{lem:EG^K}, we can conclude that for every $K \in \cC$, the fixed point set $(E_{\cC} G )^K$ is contractible.
\end{proof}

The universal space $E_{\cC} G$  is sometimes defined as the $G$-CW-complex that satisfies the above conditions.  Note that using these properties we can prove that for every $G$-CW-complex $X$ with isotropy subgroups in $\cC$, there is a unique map $X \to E_{\cC} G$ up to $G$-homotopy.
%This is why the space $E_{\cC}G$ is called the universal space. 

\subsection{Bredon cohomology of a $G$-CW-complex}\label{sect:SubBredon}

In this section we introduce the Bredon cohomology of a $G$-complex introduced by Bredon \cite{Bredon-Book}. Let $G$ be a discrete group and $\cC$ be a collection of subgroups of $G$. For a $G$-set $X$, let $R[X^?]$ denote the $R\cO_{\cC} (G)$-module defined by $$P \to R[X^P] \cong R \Map _G (G/P, X)$$ for every $P\in \cC$. If all the isotropy subgroups of $X$ are in $\cC$, then $R[X^?]$ is a free $R\cO_{\cC} (G)$-module, hence it is projective.

\begin{definition} For a $G$-CW-complex $X$, let 
$C_* (X ^?; R)$ denote the chain complex of $\cO _{\cC} (G)$-modules whose value 
at $P\in \cC$ is equal to the cellular chain complex $C_* (X^P; R)$ of the fixed point subspace $X^P$. Note that if $X_n$ is the $G$-set of $n$-cells in $X$, then $C_n (X^?; R) \cong R[X_n ^?]$ as $R\cO_{\cC} (G)$-modules. 
\end{definition}

%If $X$ is a $G$-CW-complex and $Y$ is a subcomplex of $X$ invariant under $G$-action, then we say $(X, Y)$ is a pair of $G$-CW-complexes. 
For a pair of $G$-CW-complexes $(X, Y)$, we define the associated chain complex of $R\cO_{\cC} (G)$-modules to be the quotient complex
$$C_* (X^? , Y^?; R) := C_* (X^?; R)/C_* (Y^?; R).$$ 
The augmented complex $\widetilde C_* (X ^?; R)$ is defined to be the complex $(C_* (X^?; R) \maprt{\epsilon} \underline R)$ where  $\epsilon$ is the augmentation map and the constant functor $\underline R$ is taken to be at dimension $-1$. 
The following lemma is well-known and its proof easily follows from Lemma \ref{lem:Properties}.

\begin{lemma}\label{lem:ProjRes} Let $E_{\cC}G$ be the universal space for $G$ over $\cC$. Then the augmented chain complex $\widetilde C_* ( E_{\cC} G ^?; R)$ is a projective resolution for the constant functor $\underline R$ as an $R\cO _{\cC} (G)$-module. 
\end{lemma}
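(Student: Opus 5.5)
We must show two things about the augmented complex $\widetilde C_*(E_{\cC}G^?;R)$: each term $C_n(E_{\cC}G^?;R)$ in nonnegative degrees is a projective $R\cO_{\cC}(G)$-module, and the augmented complex is exact when evaluated at every object $K\in\cC$. Exactness of a complex of functors is checked objectwise, and the augmentation places $\underline R$ in degree $-1$, so these two facts together say precisely that $C_*(E_{\cC}G^?;R)\to\underline R$ is a projective resolution.

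For projectivity, let $X_n$ be the $G$-set of $n$-cells of $E_{\cC}G$. Then $C_n(E_{\cC}G^?;R)\cong R[X_n^?]$. We decompose $X_n$ into $G$-orbits, $X_n\cong\coprod_i G/H_i$. By Lemma \ref{lem:Properties}(1), every isotropy subgroup $H_i$ lies in $\cC$. Fixed points commute with disjoint unions, so $R[X_n^?]\cong\bigoplus_i R[G/H_i^?]$ is a free $R\cO_{\cC}(G)$-module. By the Yoneda lemma, $\Hom(R[G/H^?],N)\cong N(H)$, which is exact in $N$; hence this free module is projective, as already noted in Section \ref{Sect:HigherLimits}.

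For exactness, we evaluate at $K\in\cC$. This gives the augmented cellular chain complex $C_*((E_{\cC}G)^K;R)\to R$. By Lemma \ref{lem:Properties}(2), $(E_{\cC}G)^K$ is contractible, and in particular nonempty. Its reduced cellular homology therefore vanishes, so the augmented complex is acyclic. We should also check that the augmentation is a natural transformation to the constant functor $\underline R$. This holds because for any $G$-map $G/K\to G/K'$, the induced cellular map $(E_{\cC}G)^{K'}\to(E_{\cC}G)^{K}$ sends cells to cells and therefore commutes with the augmentations.

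No step is a real obstacle. The only point needing care is that $C_n(X^?;R)\cong R[X_n^?]$ holds as functors, not just objectwise. This rests on two facts: the cells of $X^K$ are exactly the $K$-fixed cells of $X$, since $E_{\cC}G$ is the realization of a $G$-simplicial set whose $K$-fixed nondegenerate simplices form the cells of the fixed subcomplex; and cellular boundaries are natural with respect to restriction to fixed points.
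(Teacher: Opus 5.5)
Your proof is correct and is the argument the paper has in mind; the paper only says the lemma "easily follows from Lemma \ref{lem:Properties}". Projectivity of each $C_n(E_{\cC}G^?;R)\cong R[X_n^?]$ follows from the isotropy groups lying in $\cC$, and exactness is checked objectwise using contractibility of $(E_{\cC}G)^K$ for $K\in\cC$. You have filled in the details the paper leaves implicit: the naturality of the augmentation and the identification of the cells of the fixed-point subcomplex.
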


% \begin{proof} This follows from Lemma \ref{lem:Properties} and from the fact that for each $G$-set $X$, the module $R[X^?]$ is a projective $R\cO_{\cC} (G)$-module when all the isotropy subgroups of $X$ are in $\cC$.  
% \end{proof}

The orbit category of a group $G$ over the collection of all subgroups of $G$ is denoted by $\cO (G)$. Note that over $\cO(G)$, the chain complex $C_*(X^?, Y^?; R)$ is a chain complex of projective modules.

\begin{definition}\label{def:BredonCoh} 
Let $(X,Y)$ be a pair of $G$-CW-complexes and $M$ be an $R\cO (G)$-module.
The \emph{Bredon cohomology}  of $(X, Y)$ with coefficients in $M$ is defined by 
$$\underline H^* _G  (X^?, Y^?; M) := H^* (\Hom _{R\cO (G) } ( C_* (X^?, Y^?; R ), M) ).$$
The Bredon cohomology $\underline{H} ^* _G (X^? ; M)$ of a $G$-CW-complex $X$ is the Bredon cohomology of the pair $(X, \emptyset)$. 
\end{definition}

Note that if $M$ is an $R\cO_{\cC} (G)$-module, by taking its right Kan extension via the inclusion functor $j^{op}: \cO_{\cC} (G)^{op}\to \cO (G)^{op}$, the module $M$ can be extended to an $R\cO (G)$-module $\widetilde M$.  Also note that the complex $C_* (X^?, Y^?;R) $ of $R\cO (G)$-modules can be considered as a chain complex of $R\cO_{\cC} (G)$-modules by restricting all the chain modules via the inclusion functor $\cO_{\cC} (G) \to \cO (G)$. %We often denote this restricted module also by $C_* (X^?, Y^?)$. 
Under these conventions, for any $R\cO _{\cC} (G)$-module $M$, there is an isomorphism
\begin{equation}\label{eqn:Isomorphism}
\Hom _{R\cO (G)} (C_* (X^?, Y^?), \widetilde M) \cong 
\Hom _{R\cO_{\cC} (G)} (C_* (X^?, Y^? ) , M)
\end{equation}
which follows by the adjointness property of right Kan extensions.
%The higher limits over the orbit category $\cO_{\cC} (G)$ can be expressed as the Bredon cohomology of the universal space.

\begin{proposition}[Grodal \cite{Grodal-Higher}]\label{pro:LimBredon} Let $G$ be a discrete group and $\cC$ be a collection of subgroups of $G$. For every $R\cO_{\cC} (G)$-module $M$, there is an isomorphism
$$\underset{\cO_{\cC} (G)} {\lim {}^*}  M \cong {\underline H} _G ^* (E_{\cC} G ^?; \widetilde M).$$
\end{proposition}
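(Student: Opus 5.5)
The plan is to compute $\lim^*_{\cO_{\cC}(G)} M$ as Ext from the constant functor and to use the explicit projective resolution of Lemma \ref{lem:ProjRes}. By definition,
\[
\underset{\cO_{\cC}(G)}{\lim{}^*} M \cong \Ext^*_{R\cO_{\cC}(G)}(\underline R, M).
\]
Lemma \ref{lem:ProjRes} says that the augmented complex $\widetilde C_*(E_{\cC}G^?;R)$ is a projective resolution of $\underline R$ by $R\cO_{\cC}(G)$-modules. Therefore
\[
\Ext^*_{R\cO_{\cC}(G)}(\underline R, M) \cong H^*\bigl(\Hom_{R\cO_{\cC}(G)}(C_*(E_{\cC}G^?;R), M)\bigr).
\]
In this step we drop the augmentation term, which only records the resolved module $\underline R$ in degree $-1$.

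Next I would move from $\cO_{\cC}(G)$ to the full orbit category $\cO(G)$ using the right Kan extension $\widetilde M$ of $M$ along $j^{op}$. Applying the adjunction isomorphism \eqref{eqn:Isomorphism} with $X=E_{\cC}G$ and $Y=\emptyset$ gives, degreewise,
\[
\Hom_{R\cO_{\cC}(G)}(C_*(E_{\cC}G^?;R), M) \cong \Hom_{R\cO(G)}(C_*(E_{\cC}G^?;R), \widetilde M).
\]
Here the complex on the left is the restriction of the $R\cO(G)$-complex on the right. Taking cohomology of the right-hand side is exactly Definition \ref{def:BredonCoh}, which yields $\underline H^*_G(E_{\cC}G^?;\widetilde M)$.

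The step that needs care is that the adjunction isomorphism \eqref{eqn:Isomorphism} must be natural in the chain module variable. This naturality is what makes the degreewise isomorphisms commute with the differentials. It holds because the Kan extension adjunction $\Hom_{R\cO(G)}(N,\widetilde M)\cong \Hom_{R\cO_{\cC}(G)}(j^*N, M)$ is natural in $N$, and the differentials of $C_*(E_{\cC}G^?;R)$ are maps of $R\cO(G)$-modules whose restrictions are the differentials of the restricted complex. So the two cochain complexes are isomorphic, and hence so is their cohomology.

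I should also confirm that the modules $C_n(E_{\cC}G^?;R)\cong R[(E_{\cC}G)_n^?]$ are projective over $R\cO_{\cC}(G)$. This holds because all isotropy groups lie in $\cC$ by Lemma \ref{lem:Properties}(1). I should likewise confirm exactness of the augmented complex, which follows from the contractibility in Lemma \ref{lem:Properties}(2). Both facts are already packaged in Lemma \ref{lem:ProjRes}, so I expect no real obstacle beyond this naturality bookkeeping.
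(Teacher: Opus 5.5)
Your proposal is correct and follows the paper's proof. You compute $\lim^*$ as $\Ext^*(\underline R, M)$ using the resolution $\widetilde C_*(E_{\cC}G^?;R)$ of Lemma \ref{lem:ProjRes}, then pass to $\cO(G)$ via the right Kan extension adjunction \eqref{eqn:Isomorphism}; your remark that this adjunction is natural in the chain-module variable, so the degreewise isomorphisms commute with the differentials, is exactly the detail the paper leaves implicit.
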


\begin{proof}
By Lemma  \ref{lem:ProjRes} the augmented chain complex $\widetilde C_* (E_{\cC} G ^?, R)$ defines a projective resolution of $\underline R$ as an $R\cO_{\cC} (G)$-module. Then for every $R\cO _{\cC}(G)$-module $M$, we have
$$\underset{\cO _{\cC} (G)}{\lim} {}^*  M \cong H^* (\Hom _{R\cO_{\cC}(G) } (C_* (E_{\cC} G ^? ; R ), M )).$$ Using the isomorphism given in \eqref{eqn:Isomorphism}, we conclude that this cohomology module is isomorphic to the Bredon cohomology $\underline H _G ^* (E_{\cC} G ^?; \widetilde M)$. 
\end{proof}    

\begin{remark} Since the restriction of $\widetilde M$ to $R\cO_{\cC} G$ is isomorphic to $M$, and all the isotropy subgroups of $E_{\cC}G$ are in $\cC$, the Bredon cohomology $\underline H^*_G (E_{\cC} G^?; \widetilde M)$ is sometimes denoted by $\underline H^*_G (E_{\cC} G^?; M)$.
\end{remark}

An $R\cO_{\cC} (G)$-module $M$ is called an \emph{atomic functor} concentrated at $K \in \cC$ if for every subgroup $H \in \cC$ that is not $G$-conjugate to $K$, we have $M(H)=0$.  Let $\cO_p (G)$ denote the orbit category over the collection of all $p$-subgroups of $G$.
For an $RG$-module $M$, let $F_M$ denote the atomic $R\cO_p (G)$-module  concentrated at the trivial subgroup, with $F_M(1)=M$. The higher limits $\lim^* _{\cO_p (G)} F_M$ of the atomic functor $F_M$ are denoted by $\Lambda ^* (G; M)$. The following is proved by Jackowski, McClure, and Oliver \cite{JMO}.

\begin{proposition}[{\cite[Lemma 5.4]{JMO}}]
Let $G$ be a finite group and  $M: \cO_p (G)^{op} \to \bbZ_{(p)}$-Mod be an atomic functor concentrated at the $p$-subgroup $P\le G$. Then there is an isomorphism $$\underset{\cO_p (G)} {\lim {}^*} M \cong \Lambda ^* (WP , M(P) )$$    where $WP = N_G(P)/P$.
\end{proposition}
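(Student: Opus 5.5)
The approach is to reduce to the case of an atomic functor concentrated at the trivial subgroup of $N_G(P)/P$. First I will identify $\lim^*_{\cO_p(G)} M$ with the $N_G(P)$-relative computation. Let $\cC_P$ be the collection of $p$-subgroups that are $G$-conjugate to $P$.

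I will use the projective resolution $\widetilde C_*(E_{\cC}G^?;\bbZ_{(p)})$ from Lemma \ref{lem:ProjRes}, with $\cC$ the collection of all $p$-subgroups. By Proposition \ref{pro:LimBredon}, $\lim^*_{\cO_p(G)}M \cong H^*(\Hom_{\bbZ_{(p)}\cO_p(G)}(C_*(E_{\cC}G^?),M))$. For a free module $\bbZ_{(p)}[G/H^?]$, Yoneda gives $\Hom(\bbZ_{(p)}[G/H^?],M)\cong M(H)$. This vanishes unless $H$ is conjugate to $P$. Hence only the $G$-orbits of simplices with isotropy conjugate to $P$ contribute.

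More invariantly, since $M$ vanishes off the conjugates of $P$, the cochain complex is
\[
\Hom_{\bbZ_{(p)}\cO_p(G)}\bigl(C_*(X^?),M\bigr)\;\cong\;\Hom_{\bbZ_{(p)}[WP]}\bigl(C_*(X^P,X^{>P}),M(P)\bigr),
\]
where $X=E_{\cC}G$ and $X^{>P}$ is the union of the fixed sets $X^Q$ over $Q>P$. To justify this, I will check that a natural transformation $C_n(X^?)\to M$ is determined by its value at $P$. At $P$ it is a $WP$-map $C_n(X^P)\to M(P)$. It must kill the chains coming from $X^Q$ for $Q>P$: those factor through $M(Q)=0$ via the restriction map, using the morphism $P\to Q$ in the orbit category. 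Conversely, any such $WP$-map extends, setting it to zero away from the conjugates of $P$.

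The resulting cochain complex is now built from $X^P$ as an $N_G(P)/P$-space. I will apply the same identification to $\Lambda^*(WP;M(P))$, with group $WP$, trivial subgroup, and universal space $Y=E_pWP$. This gives $\Lambda^*(WP;M(P))\cong H^*_{WP}(Y,Y^{>1};M(P))$, the relative equivariant cohomology $H^*(\Hom_{\bbZ_{(p)}WP}(C_*(Y,Y^{>1}),M(P)))$, where $Y^{>1}$ is the singular part, the union of the fixed sets of nontrivial $p$-subgroups.

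The remaining step, which I expect to be the main obstacle, is to show that the pair $(X^P,X^{>P})$ is $WP$-equivalent to $(Y,Y^{>1})$, in a way that induces an isomorphism on relative equivariant cohomology. For this I will use Lemma \ref{lem:EG^K}. It gives $X^P\simeq|\cC_{\ge P}|$, which is contractible. For a $p$-subgroup $\bar Q\le WP$ with preimage $Q$, it gives $(X^P)^{\bar Q}=X^Q\simeq|\cC_{\geq Q}|$, also contractible. So $X^P$ is a model for $E_pWP$, provided its isotropy groups, computed in $WP$, are $p$-subgroups. They are: the isotropy of a cell of $X^P$ is a $p$-subgroup $H\ge P$ of $N_G(P)$, so its image $H/P$ is a $p$-group. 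Moreover $X^{>P}$ corresponds exactly to the singular part $Y^{>1}$, since it is the union over nontrivial $\bar Q$ of $(X^P)^{\bar Q}$.

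By the universal property of $E_pWP$ noted after Lemma \ref{lem:Properties}, there is a $WP$-homotopy equivalence $X^P\to Y$. It is unique up to $WP$-homotopy and respects the singular parts, since it preserves fixed points. Hence it is a map of pairs, and it is an equivalence on each fixed-point set. The relative Bredon cochains therefore agree up to quasi-isomorphism.

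The delicate point is making this relative comparison rigorous: the map of pairs must induce a homotopy equivalence of the equivariant relative chain complexes. I would argue this via the five lemma applied to the long exact sequences of the pairs. The ingredient needed is that a $WP$-map inducing equivalences on all $p$-fixed points induces a chain homotopy equivalence of the free $\cO_p(WP)$-chain complexes, and that the same holds after restricting to the singular parts. Once this is in place, it yields $\lim^*_{\cO_p(G)}M\cong\Lambda^*(WP;M(P))$.
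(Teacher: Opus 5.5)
The paper gives no proof of this statement: it is quoted from \cite{JMO} as background, so there is no in-paper argument to compare with. Your proof is correct and self-contained, and uses only the Section~2 machinery. Its key step is that, for atomic $M$, the cochains $\Hom_{\bbZ_{(p)}\cO_p(G)}(C_*(X^?),M)$ collapse onto $\Hom_{\bbZ_{(p)}[WP]}(C_*(X^P,X^{>P}),M(P))$. This is the same mechanism the paper uses in Lemma \ref{lem:Part1}, with $|\cC|$ in place of $E_{\cC}G$. Your verification of it is right. For the extension by zero, naturality along a morphism $G/P\to G/Q$ with $Q\not\sim_G P$ holds because the induced map $X^Q\to X^P$ lands in $X^{{}^gQ}\subseteq X^{>P}$. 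Your check that $X^P$ is a model for $E_pWP$ with singular set $X^{>P}$ is also right. Nothing in the argument uses $p$-locality of the coefficients, so it proves the statement over any ring.

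The step you flag as the main obstacle is actually unnecessary. The proof of Lemma \ref{lem:ProjRes} uses only the two properties of Lemma \ref{lem:Properties}. You have shown that $X^P$ has $p$-group isotropy and contractible $\bar Q$-fixed points for all $p$-subgroups $\bar Q\le WP$. Hence $\widetilde C_*((X^P)^?;\bbZ_{(p)})$ is itself a projective resolution of $\underline{\bbZ_{(p)}}$ over $\bbZ_{(p)}\cO_p(WP)$. Compute $\Lambda^*(WP;M(P))$ with this resolution, and apply your atomic identification to $WP$ and the trivial subgroup. This gives
\[
\Hom_{\bbZ_{(p)}\cO_p(WP)}\bigl(C_*((X^P)^?),F_{M(P)}\bigr)\cong\Hom_{\bbZ_{(p)}[WP]}\bigl(C_*(X^P,X^{>P}),M(P)\bigr),
\]
which is literally the complex you obtained for $\lim^*_{\cO_p(G)}M$. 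No comparison map to $Y$ and no five lemma are needed.

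If you keep the comparison route, one thing remains unverified: that $X^{>P}\to Y^{>1}$ is a nonequivariant equivalence, i.e.\ the trivial-subgroup case on the singular parts. This holds because $WP$-maps and $WP$-homotopies preserve singular sets. So the $WP$-homotopy equivalence $X^P\simeq Y$, its inverse, and the homotopies are all maps of pairs. It is therefore a $WP$-homotopy equivalence of pairs. After equivariant cellular approximation, it induces a chain homotopy equivalence $C_*(X^P,X^{>P})\simeq C_*(Y,Y^{>1})$ of $\bbZ_{(p)}[WP]$-modules.
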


%In the next section we introduce the equivariant cohomology of a $G$-space.

%-----------------

\section{Equivariant cohomology and equivariant coefficient systems}\label{sect:Grodal's theorem}

In this section, we give definitions related to equivariant cohomology and equivariant coefficient systems. We also introduce the equivariant cohomology of a $G$-simplicial set with coefficients in an equivariant coefficient system.

\subsection{Equivariant cohomology of a $G$-space}
Let $G$ be a discrete group and $R$ be a commutative ring with unity. For a pair of $G$-CW-complexes $(X, Y)$, let $C_* (X, Y; R)$ denote the cellular chain complex of the pair $(X, Y)$ with coefficients in $R$. Note that $C_* (X, Y;R)$ is a chain complex of $RG$-modules.

\begin{definition}\label{def:EqCoh} For a pair of $G$-CW-complexes
$(X,Y)$ and an $RG$-module $M$, the \emph{$G$-equivariant cohomology} $H^*_G (X, Y ; M)$ is defined as the cohomology of the cochain complex $\Hom _{RG} (C_* (X, Y; R) , M )$. 
\end{definition}

For a $G$-CW-complex $X$, let $X^s=\bigcup _{1< H \le G} X^H$ denote the singular set of $X$. For a group $G$, let $S_p^e (G)$ denote the poset of all $p$-subgroups of $G$, and $S_p (G)$ denote the poset of all nontrivial $p$-subgroups of $G$. We have the following observation due to Grodal \cite{Grodal-Higher} which will be used later in the paper.

\begin{proposition}\label{pro:CohIsom}
Let $G$ be a finite group, and $E_pG$ denote the universal space of $G$ over the collection of all $p$-subgroups in $G$. For every $\bbZ _{(p)}G$-module $M$, there is an isomorphism
$$ H^* _G (E_p G , (E_p G)^s ; M) \cong H^* _G ( |S_p^e  (G)|, |S_p (G)| ; M).$$
\end{proposition}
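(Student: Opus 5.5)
We need to prove $H^*_G(E_pG,(E_pG)^s;M)\cong H^*_G(|S_p^e(G)|,|S_p(G)|;M)$ for a finite group $G$ and a $\bbZ_{(p)}G$-module $M$. The plan is to construct an equivariant map of pairs and show it induces an isomorphism on relative equivariant cohomology. Since $M$ is $p$-local, we can use a transfer argument to reduce to the Sylow subgroup. There the relative cells of both pairs are free, so a nonequivariant homotopy equivalence of the pairs is enough.

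\textbf{Step 1: the map of pairs.} Take the $G$-map $f\colon E_pG\to |S_p^e(G)|$ of Lemma \ref{lem:EG^K}, sending $(G/H,gH)$ to ${}^gH$. For $K\neq 1$, $f$ maps $(E_pG)^K$ into $|S_p^e(G)_{\ge K}|\subseteq |S_p(G)|$. The singular set is the union of these fixed-point sets, so $f$ maps $(E_pG)^s$ into $|S_p(G)|$ and gives a map of pairs.

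\textbf{Step 2: nonequivariant comparison.}
\begin{itemize}
\item $E_pG$ is contractible, since $(E_pG)^1$ is contractible by Lemma \ref{lem:Properties}.
\item $|S_p^e(G)|$ is contractible, since it has the minimum element $1$.
\item On singular parts, restrict $f$ to $\bigcup_{K\neq1}(E_pG)^K\to\bigcup_{K\neq1}|S_p^e(G)_{\ge K}|=|S_p(G)|$. Both sides are covered by these subcomplexes indexed by nontrivial $p$-subgroups $K$, and finite intersections match: $(E_pG)^{K_1}\cap\cdots\cap(E_pG)^{K_r}=(E_pG)^{\langle K_i\rangle}$, with the analogous statement on the right.
\item On each intersection, $f$ is a homotopy equivalence by Lemma \ref{lem:EG^K}. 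If $\langle K_i\rangle$ is not a $p$-group, both sides are empty, since $E_pG$ has only $p$-group isotropy.
\item A standard gluing (nerve) lemma for such covers then shows $f\colon (E_pG)^s\to|S_p(G)|$ is a homotopy equivalence.
\end{itemize}
Hence $f$ induces an isomorphism $H_*(E_pG,(E_pG)^s;\bbZ_{(p)})\to H_*(|S_p^e(G)|,|S_p(G)|;\bbZ_{(p)})$.

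\textbf{Step 3: upgrade to equivariant cohomology.} Let $P$ be a Sylow $p$-subgroup. For a $\bbZ_{(p)}G$-module $M$, restriction followed by transfer on $H^*_G(-;M)$ is multiplication by $[G:P]$, which is invertible. So $H^*_G(X,Y;M)$ is naturally a direct summand of $H^*_P(X,Y;M)$, cut out by the stability (double coset) idempotent, and this is compatible with $f$. It therefore suffices to prove the isomorphism for $P$.

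\textbf{Step 4: the $P$-case.} Every relative cell of either pair has trivial isotropy in $P$: any cell with nontrivial isotropy in $P$ is a nontrivial $p$-subgroup of $G$ and so lies in the singular part. Hence both relative cellular chain complexes are complexes of free $\bbZ_{(p)}P$-modules, bounded below, and $f$ induces a quasi-isomorphism between them by Step 2. A quasi-isomorphism of bounded-below complexes of projectives is a chain homotopy equivalence, so applying $\Hom_{\bbZ_{(p)}P}(-,M)$ gives an isomorphism in cohomology. Combining with Step 3 gives the result for $G$.

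\textbf{Main obstacle.} The hardest part is the gluing argument in Step 2, namely identifying the singular sets up to homotopy. An alternative is to work with the poset $\cE_pG$ directly and apply Quillen's fiber lemma to $f$ restricted to the subcategory of objects with nontrivial isotropy. Its fibers $f/{\ge}K$ are contractible by the same argument as in Lemma \ref{lem:EG^K}.
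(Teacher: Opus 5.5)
There is a genuine gap, in Step 4. Steps 1--3 are sound. The covering argument in Step 2 works, though it is more than you need: $(E_pG)^s$ is just the universal space $E_{\cC}G$ for $\cC$ the nontrivial $p$-subgroups, so Lemma \ref{lem:EG^K} with $K=1$ gives the equivalence directly.

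The claim in Step 4 that every relative cell of $(|S_p^e(G)|,|S_p(G)|)$ has trivial isotropy in $P$ is false. The relative cells are the chains $(1<P_1<\cdots<P_n)$. Under conjugation, such a chain has isotropy $N_P(P_1)\cap\cdots\cap N_P(P_n)$, which is typically nontrivial. The cone vertex $1$ is fixed by all of $G$, so in degree $0$ the relative chain complex contains the trivial module $\bbZp$. You have carried over a property of $E_pG$ to the subgroup complex. In $E_pG$ the isotropy of a simplex is its first subgroup, so the simplices outside the singular set are exactly the free ones. In the subgroup complex, however, the isotropy of a chain is its normalizer, and $|S_p(G)|$ is not the singular set. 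So $C_*(|S_p^e(G)|,|S_p(G)|;\bbZp)$ is only a complex of permutation modules. A nonequivariant quasi-isomorphism into it need not induce an isomorphism after applying $\Hom_{\bbZp P}(-,M)$. For example, a free resolution $F_*\to\bbZp$ yields $H^*(P;M)$ on one side and only $M^P$ on the other.

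What is missing is fixed-point information on the right-hand pair. For every nontrivial $Q\le P$:
\begin{itemize}
\item $|S_p^e(G)|^Q$ is contractible, since it has minimum element $1$;
\item $|S_p(G)|^Q$ is contractible, by Quillen's conical contraction $R\le RQ\ge Q$ on the nontrivial $p$-subgroups normalized by $Q$;
\item $(E_pG)^Q=((E_pG)^s)^Q$ is contractible by Lemma \ref{lem:Properties}.
\end{itemize}
Together with your Step 2 for $Q=1$, this shows that $f$ induces a homotopy equivalence of pairs on $Q$-fixed points for every $Q\le P$. Hence $f$ is a $P$-homotopy equivalence of pairs. Equivalently, $f$ induces a chain homotopy equivalence of the Bredon chain complexes, which are complexes of projective $\bbZp\cO(P)$-modules. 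Since $H^*_P(X,Y;M)$ is Bredon cohomology with the fixed-point coefficient system $P/Q\mapsto M^Q$, this gives the isomorphism on $H^*_P$. Your transfer step then finishes the proof. This is exactly the paper's route; it quotes the contractibility of $|S_p^e(G)|^Q$ and $|S_p(G)|^Q$ from Smith's book.
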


\begin{proof}
Consider the $G$-map of pairs  
$$f: (E_p G , (E_p G) ^s ) \to (|S_p ^e  (G)|, |S_p (G)|)$$ 
induced by the functor $\cE _p G \to S_p ^e (G)$ that sends $(G/P, gP)$ to ${}^g P$. By Lemma \ref{lem:EG^K} applied with $K=1$, the universal space $E_p G$ is contractible. The realization of the poset $S_p^e (G)$ is also contractible since it has a minimal element. Hence the map $f:E_pG \to S_p^e(G)$ is a homotopy equivalence. 
The singular set $(E_p G)^s$ is $E_{\cC} G$ where $\cC$ is the collection of all nontrivial $p$-subgroups of $G$. The restriction of $f$ to $E_{\cC} G$ is the map defined in Lemma \ref{lem:EG^K}. Applying Lemma \ref{lem:EG^K} with $K=1$, we obtain that the restriction of $f$ to $(E_pG)^s$ is a homotopy equivalence. Hence $f$ is a (nonequivariant) homotopy equivalence of pairs. 

Let $S$ be a Sylow $p$-subgroup of $G$. For every subgroup $1 \neq P\le S$,  the pair $$((E_p G)^P, (E_p G ^s)^P)$$ is a trivial pair with both spaces contractible by Lemma \ref{lem:Properties}, and the fixed point sets $|S^e_p (G)|^P$ and $|S_p (G)| ^P $ are both contractible by \cite[Lemma 3.3.20]{Smith-2011}. For $P=1$ we have a homotopy equivalence of pairs by the above argument. It follows that $f$ is an $S$-homotopy equivalence, hence it induces an isomorphism
$$ H^* _S (E_p G , (E_p G)^s ; M) \cong H^* _S ( |S_p^e  (G)|, |S_p (G)| ; M).$$
Since $M$ is a $\bbZ _{(p)}G$-module, by a standard transfer argument, we can conclude that $f$ induces an isomorphism on $G$-equivariant cohomology groups.
\end{proof}

%----------------

\subsection{Equivariant coefficient systems}\label{sect:EquivariantCoefSystem}

%The \emph{simplex category} $\Delta$ is the category whose objects are sets 
%$[n]=\{0,\dots, n\}$, for $n\geq 0$, and whose morphisms are order preserving functions $\lambda : [m]\to [n]$. 
A simplicial $G$-set $X$ is a functor $X: \Delta^{op} \to G$-sets, where $\Delta$ denotes the simplex category. For a $G$-simplicial set $X$, the \emph{category of simplices} in $X$ is the category $\Delta(X)$ whose objects are the simplices of $X$, where for every $\tau\in X_m$ and $\sigma \in X_n$, the morphisms $\tau \to \sigma$ in $\Delta(X)$ are given by the morphisms $\lambda : [m]\to [n]$ in $\Delta$ such that $\lambda ^* (\sigma)=\tau$. 

If $X$ is a $G$-simplicial set, then the category $\Delta(X)$ is a $G$-category. Let $\Delta (X)_G$ denote the Grothendieck construction $\int _G \Delta (X)$ which is the category whose objects are the simplices of $X$, and for $\tau\in X_m$ and $\sigma \in X_n$, 
the morphisms $\tau \to \sigma$ in $\Delta (X)_G$ are given by the pairs $(g, \lambda)$ such that $g\in G$ and $\lambda :[m]\to [n]$ such that
$\lambda ^* (\sigma)=g\tau$. A (covariant) functor 
\[
\cM: \Delta  (X) _G \to R\text{-Mod}
\]
is called a \emph{$G$-equivariant coefficient system} on $X$ over $R$. Note that for each $i \in [n]$, the coface map $d^i : [n-1]\to [n]$ induces a homomorphism $\cM (1, d^i): \cM (d_i \sigma ) \to \cM (\sigma)$ where $d_i : X_n \to X_{n-1}$ is the corresponding face map. 

Given a $G$-simplicial set $X$ and a $G$-equivariant coefficient system $\cM$ on $X$, let $C^* _G (X; \cM)$ denote the cochain complex such that for each $n\geq 0$, 
$$ C^n _G (X; \cM ) = \Bigl ( \prod _{\sigma \in X_n } \cM (\sigma) \Bigr ) ^G.$$ 
 %The elements of $C^n (X; \cM)=\prod _{\sigma \in X_n} \cM (\sigma)$ are the functions \[f : X_n \to \bigcup \limits_{\sigma \in X_n } \cM (\sigma)\]
%such that $f (\sigma ) \in \cM (\sigma)$ for every $\sigma\in X_n$. 
The coboundary map $\delta ^n : C^n _G (X; \cM) \to C^{n+1} _G (X; \cM )$ is defined by $$ (\delta ^n f ) (\sigma)= \sum _{i=0} ^{n+1} (-1)^i \cM ( 1, d^i) ( f( d_i \sigma))$$
for every $f\in C_G ^n (X; \cM)$ and $\sigma\in X_{n+1}$. The $G$-action on $C^n (X; \cM)=\prod _{\sigma \in X_n } \cM (\sigma )$ is defined as follows: For $g\in G$, $f\in C^n (X; \cM)$, and $\sigma \in X_n$, we define $$(gf) (\sigma )=\cM (g, \id_\sigma ) (f(g^{-1} \sigma ) )$$ where $\cM (g, \id_\sigma): \cM (g^{-1} \sigma ) \to \cM (\sigma )$ is the homomorphism induced by the morphism $(g, \id_\sigma ) : g^{-1} \sigma \to \sigma$ in $\Delta (X)_G$.

\begin{definition}\label{def:EquivCoh}
Given a $G$-simplicial set $X$ and a $G$-equivariant coefficient system $\cM$ on $X$, the \emph{equivariant cohomology $H^n _G(X; \cM) $ with coefficients in $\cM$}
is the $n$-th cohomology of the cochain complex $C^* _G ( X; \cM)$ introduced above.
\end{definition}

In our applications, the simplicial set $X$ is the simplicial set associated to the simplicial complex $|\cC|$, where $\cC$ is a collection of subgroups in a discrete group $G$. The equivariant coefficient system $\cM$ on $|\cC|$ is associated to an $R\cO_{\cC} (G)$-module $M$ via the functor $\xi$ defined below.
%obtained by precomposing an $R\cO_{\cC}(G)$-module $M$ with the functor $\xi: \Delta(|\cC|)_G \to \cO_{\cC} (G) ^{op}$. 
In this case, the normalized cochains of $C_G ^* (X; \cM)$ are indexed by strict chains in $\cC$ and the subcomplex of normalized cochains has the same cohomology as the cohomology of $C_G^* (X; \cM)$.

\subsection{The functor $\xi$ and Grodal's theorem}

Let $G$ be a discrete group and $\cC$ be a collection of subgroups of $G$. As before let $|\cC|$ denote the $G$-simplicial complex whose simplices are the strict chains of subgroups in $\cC$. We can consider $|\cC|$ as a simplicial set in the usual way.

\begin{definition}\label{def:xi} 
The functor $\xi: \Delta (|\cC| )_G \to \cO _{\cC} (G)^{op}$ is defined as the composition 
\[
\Delta (|\cC| )_G \to (\cC ^{op} )_G \to \cO _{\cC}(G)^{op}
\]
where the first functor sends a simplex $\sigma=(H_0 \le \cdots \le H_n)$ of $|\cC|$ to $H_0 \in \cC$, and the second functor sends $H\in \cC$ to itself in $\cO _{\cC} (G)$. Given a morphism $(g, \lambda): \tau \to \sigma$ in $\Delta (|\cC|)_G$
with $\tau = (K_0 \le \cdots \le K_m)$ and $\sigma = (H_0 \le \cdots \le H_n)$, we have $H_0 \le H_{\lambda(0)}={}^g K_0$. This gives a morphism $\mu:{}^g K_0 \to H_0$ in $\cC^{op}$.
The first functor sends $(g, \lambda)$ to $(g,\mu): K_0  \to H_0$ in $(\cC^{op})_G$. The second functor $(\cC^{op})_G \to \cO_{\cC} (G)^{op}$ sends the morphism $(g, \mu): K \to H$ in $(\cC^{op} )_G$ to the $G$-map $G/H \to G/K$ defined by $H \to gK$.  
\end{definition}

Note that if $(g, \mu): K\to H$ is a morphism in $(\cC^{op})_G$, then $H\le {}^g K$ which gives $H^g\le K$. Hence the assignment $H\to gK$ defines a $G$-map $G/H \to G/K$. Given an $R\cO _\cC (G)$-module $M$, the composition of $M$ with the functor $\xi$ gives a $G$-equivariant coefficient system $\cM$ on the simplicial complex $|\cC|$. If $G$ is a finite group, $\cC$ is a collection of $p$-subgroups closed under passage to $p$-radical overgroups, and $M$ is a $\bbZ_{(p)}\cO_{\cC}(G)$-module, Grodal’s theorem (Theorem \ref{thm:IntroGrodal}) states that there is an isomorphism 
\[
\underset{\cO_{\cC} (G)}{\lim {}^*}  M \cong H_G^*(|\cC|;\cM).
\]
 
The following example shows that this isomorphism can fail for infinite groups.

\begin{example}\label{ex:InfiniteGroup}
Consider the infinite group 
\[
G=\langle a, b \, |\, b^3=1, aba^{-1}=b^{-1}\rangle \cong C_3 \rtimes \bbZ.
\]
Note that $S=\langle b \rangle \cong C_3$ is the unique Sylow $3$-subgroup of $G$. Thus $\cC=\{ S\}$ and the centric orbit category 
$\cO ^c (G)$  has only one object $S$ with normalizer $N_G(S)\cong G$.
Let $R=\bbZ_{(3)}$ and $M$ be an $R\cO^c (G)$-module. If $\cM$ is the $G$-equivariant coefficient system on $|\cC|$ induced by $M$, then
\[
H^i _G (|\cC| ; \cM) = \begin{cases} M(S)^G   & \text{ if } i=0, \\ 0 & \text{ otherwise.}\end{cases}
\]
To calculate the higher limits $\lim ^* _{\cO^c(G)} M$, observe that  a resolution of the constant functor $\underline R$ as an $R\cO^c (G)$-module can be given as 
\[
0 \to R[(G/S)^?] \maprt{1-a} R[(G/S)^?] \to \underline R \to 0.
\]
Hence, we have 
\[
\underset{\cO ^c (G)}{\lim {}^i} M =\begin{cases} \ker \{ M(S ) \maprt{1-a} M(S) \} & \text{ if } i=0 \\
\mathrm{coker} \{ M(S) \maprt{1-a} M(S) \} & \text{ if } i=1 \\
0 & \text{otherwise}.
\end{cases}
\]
If the action of the element $a \in G$ on $M(S)$ is trivial,  then $\lim ^i _{\cO ^c (G) } M \cong M(S)$ for $i=0,1$. 
In particular, if $M(S)=R$ is the trivial module, then 
\[
R \cong \underset{\cO ^c (G)}{\lim {}^1} M \not \cong H^1_G ( |\cC|; \cM)=0.
\]
This shows that the isomorphism in Theorem \ref{thm:IntroGrodal} does not hold in general when $G$ is an infinite group. 

Note that in this example, the fusion system $\cF=\cF_S(G)$ for $G$ is isomorphic to the fusion system of the symmetric group $S_3$, and the statement of Theorem \ref{thm:Main} still holds for $\cF$. 
To see this, let $M$ be an $R\cO^c (\cF)$-module. The automorphism group $\Aut _{\cO ^c (\cF) } (S)=N_G(S)/SC_G(S)$ is isomorphic to $C_2$, hence $\underline R$ is a projective $R\cO^c (\cF)$-module. This gives that $\lim ^i _{\cO^c (\cF)} M =0$ for $i>0$, and 
\[
\underset{\cO^c(\cF)}{\lim {}^0} M\cong M(S)^{C_2}\cong M(S)^G \cong H_G ^0 (|\cC|; \cM).
\]
Hence the isomorphism in Theorem \ref{thm:Main} holds for the fusion system $\cF$.
Note that in this example, $G$ is the discrete group realizing the fusion system $\cF$ constructed using the Leary-Stancu model.
\end{example}

%----------------------------

\section{Realization of fusion systems and fusion orbit category}\label{sect:FusionAndHigher}

In this section we introduce some basic definitions related to fusion systems and the fusion orbit category of a discrete group.
For further details on this material, we refer the reader to  \cite{AKO-Book} and \cite{Craven-Book}.

\subsection{Fusion systems and their realizations}\label{sect:Fusion} 

\begin{definition}\label{def:FusionSystem}
A \emph{fusion system} $\cF$ over a finite $p$-group $S$ is a category whose objects are subgroups of $S$,
and whose set of morphisms $\Mor _{\cF} (P, Q)$ between two subgroups $P, Q \le S$ satisfies the 
following properties:
\begin{enumerate}
\item $\Hom_S(P, Q) \subseteq  \Mor _{\cF} (P, Q) \subseteq \mathrm{Inj} (P,Q)$ where $\Hom _{S} (P, Q)$ denotes the set of 
all conjugation maps induced by elements of $S$.
\item If $\varphi :  P \to Q$ is a morphism in $\cF$, then $\varphi ^{-1}: \varphi P \to P$ is a morphism in $\cF$.
\end{enumerate}
\end{definition}

Two subgroups $P$ and $Q$ are \emph{$\cF$-conjugate} if there is an isomorphism $\varphi : P \to Q$ in $\cF$. 
In this case we write $P \sim _{\cF } Q$.  A subgroup $P\le S$ is \emph{fully $\cF$-normalized}  if $|N_S (P )| 
\geq |N_S (P') |$ for every $P'\sim _{\cF } P$.  A subgroup $P\le S$ is \emph{fully $\cF$-centralized} 
if $|C_S (P )| \geq |C_S (P') |$ for every $P'\sim _{\cF } P$. 
 
\begin{definition}[{\cite[Definition 1.2]{BLO2}}]\label{def:Saturated} A fusion system $\cF$ over $S$ is \emph{saturated} 
if it satisfies the following properties:
\begin{enumerate} 
\item If $P \le S$ is fully $\cF$-normalized, then $P$ is fully $\cF$-centralized and $\Aut _S (P)$ 
is a Sylow $p$-subgroup of $\Aut _{\cF} (P)$. 
\item If $P \le S$ and $\varphi \in \Mor _{\cF} (P, S)$ such that $\varphi P$ is fully $\cF$-centralized,
then $\varphi$ extends to a morphism $\widetilde \varphi : N_{\varphi } \to S$ where 
$$ N_{\varphi } := \{ x \in N_S (P) \, | \, \exists y \in N_S (\varphi P ) \text{ such that} \ c_x 
= \varphi^{-1} \circ c_y \circ \varphi \}.$$ 
\end{enumerate}
\end{definition}

A finite $p$-subgroup $S$ of a discrete group $G$ is called a Sylow $p$-subgroup of $G$ if every finite $p$-subgroup of $G$ is conjugate to a subgroup of $S$. For a discrete group $G$ with a Sylow $p$-subgroup $S$, the fusion system $\mathcal{F}_S(G)$ is the category whose objects are the subgroups of $S$, and whose morphisms $P \to Q$ are given by conjugation maps 
$c_g: P \to Q$ with $g \in G$.  If $G$ is a finite group, the category $\cF_S(G)$ is a saturated fusion system over $S$ (see \cite[Proposition 1.3]{BLO2}). This is not true in general for an infinite group.

A fusion system $\cF$ over $S$ 
is said to be \emph{realized by $G$}, if $G$ has a Sylow $p$-subgroup 
isomorphic to $S$ and there is an isomorphism of categories $\cF \cong \cF_S(G)$. We have the following realization theorem for fusion systems.
 
\begin{theorem}[Leary-Stancu \cite{Leary-Stancu}, Robinson \cite{Robinson}]\label{thm:Realization} Every fusion 
system $\cF$ is realized by a discrete group $G$ constructed as the fundamental group of 
a graph of groups $(\cG, Y)$ where $Y$ is a finite graph and for all vertices $v\in V$ and edges $e\in E$, the
isotropy subgroups $G_v$ and $G_e$ are finite groups.
\end{theorem}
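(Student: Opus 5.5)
The plan is to follow Leary and Stancu: realize $\cF$ as an iterated HNN extension of $S$, and control fusion through the action on the Bass--Serre tree. Only the fusion system axioms of Definition \ref{def:FusionSystem} are needed here; saturation plays no role in the realization.

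\emph{Step 1 (the construction).} Every morphism of $\cF$ is a composition of restrictions of $\cF$-isomorphisms between subgroups of $S$. There are only finitely many such isomorphisms, so choose a finite list $\varphi_i : P_i \to Q_i$, $i=1,\dots ,k$, whose restrictions and composites (together with inclusions) generate $\cF$. For instance, take all of them. Define
\[
G=\langle S, t_1,\dots ,t_k \mid t_i x t_i^{-1}=\varphi_i(x) \text{ for all } x\in P_i,\ 1\le i\le k\rangle .
\]
This is the fundamental group of a graph of groups $(\cG,Y)$. The graph $Y$ has one vertex with vertex group $S$ and $k$ loops. The $i$-th loop has edge group $P_i$, with the two attaching maps given by the inclusion $P_i\hookrightarrow S$ and by $\varphi_i$. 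All vertex and edge groups are finite, as the theorem requires. By Britton's lemma, or by the normal form theorem, $S$ embeds in $G$.

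\emph{Step 2 ($S$ is Sylow).} Let $T$ be the Bass--Serre tree of $(\cG,Y)$. Every finite subgroup of a group acting on a tree without inversions fixes a vertex (Serre). Vertex stabilizers are the conjugates $gSg^{-1}$. Hence every finite $p$-subgroup of $G$ is conjugate into $S$, so $S$ is a Sylow $p$-subgroup in the sense used in the paper.

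\emph{Step 3 ($\cF_S(G)=\cF$).} The inclusion $\cF\subseteq \cF_S(G)$ holds because each $\varphi_i$ equals $c_{t_i}|_{P_i}$ and $\cF$ is generated by these maps. For the reverse inclusion, let $P\le S$ and $g\in G$ with $gPg^{-1}\le S$. Let $v_0$ be the vertex fixed by $S$. Then $P$ fixes both $v_0$ and $g^{-1}v_0$, so it fixes the geodesic $v_0=w_0,w_1,\dots ,w_m=g^{-1}v_0$ pointwise.

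Write $g^{-1}=s_0t_{i_1}^{\epsilon_1}s_1\cdots t_{i_m}^{\epsilon_m}s_m$ in reduced form, matching the geodesic edge by edge. Fixing the $j$-th edge means that the current conjugate of $P$ lies in the relevant edge group, namely $P_{i_j}$ or $Q_{i_j}$. Crossing that edge then applies $\varphi_{i_j}^{\pm1}$ to it. Inductively, $c_g|_P$ factors as a composite of conjugations by elements $s_j\in S$ and restrictions of the maps $\varphi_{i_j}^{\pm 1}$. Each of these is a morphism of $\cF$, so $c_g|_P\in \cF$.

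The main obstacle is this last step. The difficulty is matching the reduced word with the geodesic carefully enough that, at each stage, the partially conjugated copy of $P$ really lies in the appropriate edge group. I would handle it by induction on $m$, using the explicit description of edge stabilizers in $T$ as conjugates of $P_i$ and $Q_i$. The alternative is a direct Britton's lemma argument: if $gxg^{-1}\in S$ for all $x\in P$ with $g$ reduced, then each pinch $t_{i}^{\pm1}(\cdot)t_{i}^{\mp1}$ must occur inside $P_i$ or $Q_i$.
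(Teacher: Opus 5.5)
Your proposal is correct and follows the Leary--Stancu argument the paper cites. The paper gives no proof of its own; it only notes that Leary and Stancu build $G$ as an iterated HNN extension of $S$, valid for non-saturated $\cF$. You use the same ingredients: a one-vertex graph of groups with finite edge groups, Serre's fixed-point theorem on the Bass--Serre tree for the Sylow property, and the pointwise-fixed geodesic to factor $c_g|_P$ into $S$-conjugations and restrictions of the $\varphi_i^{\pm1}$. The edge-stabilizer bookkeeping you flag works out, because the edges at $v_0$ toward $t_iv_0$ and $t_i^{-1}v_0$ have stabilizers $S\cap t_iSt_i^{-1}=Q_i$ and $S\cap t_i^{-1}St_i=P_i$.
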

 
Leary and Stancu \cite{Leary-Stancu} construct $G$ as an iterated HNN-extension and their construction works for any fusion system (not necessarily  saturated). 
In Robinson's model the group $G$ is constructed as a generalized amalgamation of finite groups with edge groups that are finite $p$-groups (see \cite[Theorem 2]{Robinson} and \cite[Theorem 2.1]{Seeliger} for details).

\subsection{Fusion orbit category} Let $G$ be a discrete group, and $\cC$ be a collection of subgroups in $G$. The \emph{fusion category} $\cF_{\cC} (G)$ over $\cC$ is the category whose objects are subgroups in $\cC$ 
and whose morphisms from $P$ to $Q$ are given by $$\Mor _{\cF _{\cC} (G) } (P, Q) = N_G (P, Q) / C_G(P)$$
where $N_G (P, Q)=\{ g \in G \, |\, gPg^{-1} \le Q \}$. The \emph{fusion orbit category} $\overline{\cF}_{\cC} (G)$ over $\cC$ is the category whose objects are subgroups in $\cC$, 
and whose morphisms $P\to Q$ are given by $$\Mor _{\overline{\cF} _{\cC} (G) } (P, Q) = Q\backslash N_G (P, Q) / C_G(P).$$

Let $G$ be a discrete group realizing a fusion system $\cF$.
A $p$-subgroup $P\le G$ is said to be \emph{$p$-centric} in $G$ if $Z(P)$ is a Sylow
$p$-subgroup of $C_G (P)$.  By \cite[Lemma 8.9]{Yalcin-Higher}, a $p$-subgroup $P \le G$ is $G$-conjugate to an 
$\cF$-centric subgroup if and only if it is $p$-centric in $G$. 
As a consequence of this fact, we have the following:

\begin{lemma}[{\cite[Lemma 8.9]{Yalcin-Higher}}]\label{lem:Equivalent}
Let $\cF$ be a saturated fusion system, $G$ be a discrete group realizing $\cF$, and
$\cCe$ denote the collection of all $p$-centric subgroups in $G$.
Then, there are equivalences 
of categories $\cF^c \cong \cF_{\cCe} (G)$ and $\cO ^c (\cF) \cong \overline{\cF} _{\cCe} (G)$.
\end{lemma}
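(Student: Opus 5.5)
The plan is to construct both equivalences explicitly, via functors whose objects are the same, namely the $\cF$-centric subgroups of $S$ inside $\cCe$, and then to check that these functors are full, faithful and essentially surjective. Since $G$ realizes $\cF$, we have an isomorphism $\cF\cong \cF_S(G)$. Under this identification, for $P,Q\le S$ we get
\[
\Mor_{\cF}(P,Q)=\{c_g\colon P\to Q \mid g\in N_G(P,Q)\}\cong N_G(P,Q)/C_G(P).
\]
This is exactly $\Mor_{\cF_{\cCe}(G)}(P,Q)$, provided $P,Q\in \cCe$.

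The first step is to check that $\cF$-centric subgroups of $S$ lie in $\cCe$. By the cited fact \cite[Lemma 8.9]{Yalcin-Higher}, a $p$-subgroup of $G$ is $p$-centric if and only if it is $G$-conjugate to an $\cF$-centric subgroup. In particular every $\cF$-centric $P\le S$ is $p$-centric, so the inclusion $\cF^c\to \cF_{\cCe}(G)$ is a well-defined functor. By the identification above it is fully faithful.

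The second step is essential surjectivity. Take $H\in\cCe$. By the same fact, $H$ is $G$-conjugate to some $\cF$-centric $P\le S$, say $gHg^{-1}=P$. Then $c_g\colon H\to P$ is an isomorphism in $\cF_{\cCe}(G)$. This gives the first equivalence $\cF^c\simeq\cF_{\cCe}(G)$.

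The third step passes to orbit categories. Both $\cO^c(\cF)$ and $\overline{\cF}_{\cCe}(G)$ are obtained from the previous categories by quotienting morphism sets $P\to Q$ by the left action of $\Inn(Q)$, which corresponds to left multiplication by $Q$ on $N_G(P,Q)$. The bijection $\Mor_{\cF}(P,Q)\cong N_G(P,Q)/C_G(P)$ is equivariant for this action, since $c_q\circ c_g=c_{qg}$. Hence it descends to a bijection
\[
\Inn(Q)\backslash\Mor_{\cF}(P,Q)\cong Q\backslash N_G(P,Q)/C_G(P),
\]
and this bijection is compatible with composition. The induced functor $\cO^c(\cF)\to\overline{\cF}_{\cCe}(G)$ is therefore fully faithful. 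It is essentially surjective by the same conjugation argument, because isomorphisms in the first category map to isomorphisms in the quotient.

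I expect the main obstacle to be the input fact that a $p$-subgroup of $G$ is $p$-centric if and only if it is conjugate to an $\cF$-centric subgroup, since it is used in both the first and second steps. Everything else is formal bookkeeping.

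If I had to prove that fact rather than cite it, I would argue as follows. Conjugate $H$ into $S$ (by the Sylow property) to a fully $\cF$-centralized $P$. Then $C_S(P)$ is a Sylow $p$-subgroup of $C_G(P)$: any finite $p$-subgroup of $C_G(P)$ together with $P$ generates a finite $p$-group, which conjugates into $S$, and saturation together with full centralization bounds its centralizer part. It then remains to compare the conditions $Z(P)\in\mathrm{Syl}_p(C_G(P))$ and $C_S(P')\le P'$ for all $P'\sim_\cF P$. Comparing these conditions is where the saturation axioms and the realization $\cF\cong\cF_S(G)$ genuinely enter.
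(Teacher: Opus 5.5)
Your proposal is correct and takes essentially the same route as the paper. The paper gives no separate argument: it deduces the lemma directly from the cited fact that a $p$-subgroup of $G$ is $p$-centric if and only if it is $G$-conjugate to an $\cF$-centric subgroup of $S$, after which the identification $\Mor_{\cF}(P,Q)\cong N_G(P,Q)/C_G(P)$ and its quotient by $\Inn(Q)$ make both equivalences formal, exactly as you describe. Your optional sketch of that cited fact is also sound: you take a fully $\cF$-centralized representative and use the extension axiom to show that $C_S(P)$ is Sylow in $C_G(P)$, which parallels the extension-axiom argument the paper uses for normalizers in Section 4.
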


We use these equivalences for the calculation of higher limits over the centric orbit category $\cO ^c (\cF)$. Modules and higher limits over the centric orbit category $\cO^c (\cF)$ are defined using the definitions in Section \ref{Sect:HigherLimits} with $\bC=\cO^c (\cF)$.  An $R\cO^c (\cF)$-module $M$ is called an \emph{atomic functor} concentrated at $Q\in \cF^c$ if for every $\cF$-centric subgroup $P\le S$, we have $M(P)=0$ if $P$ is not $\cF$-conjugate to $Q$. 

For higher limits of an atomic $R\cO ^c (\cF)$-module, we have the following proposition due to Broto, Levi, and Oliver \cite{BLO2}.
 
\begin{proposition}[{\cite[Proposition 3.2]{BLO2}}]\label{pro:HigherFusionOrbit}
Let $\cF$ be a saturated fusion system over a finite $p$-group $S$. Suppose that $M$ is an atomic $R\cO ^c (\cF)$-module concentrated at $Q\in \cF^c$. Then there is an isomorphism
$$\underset{\cO ^c (\cF)}{\lim {}^*} \,  M \cong \Lambda ^* ( \Out _{\cF} (Q), M(Q) )$$ where $\Out _{\cF} (Q) \cong  \Inn (Q) \backslash \Aut _{\cF} (Q)$. 
\end{proposition}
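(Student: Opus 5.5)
Since $M$ is atomic, I would compute $\lim^*_{\cO^c(\cF)} M=\Ext^*_{R\cO^c(\cF)}(\underline R, M)$ by reducing the whole computation to the value at the single object $Q$. Replacing $Q$ by an $\cF$-conjugate does not change either side, so I first choose $Q$ fully $\cF$-normalized. Put $\Gamma=\Out_\cF(Q)$. By saturation (Definition \ref{def:Saturated}), $\Out_S(Q)=N_S(Q)/QC_S(Q)$ is a Sylow $p$-subgroup of $\Gamma$. Since $Q$ is $\cF$-centric, $C_S(Q)\le Q$, and so $\Out_S(Q)\cong N_S(Q)/Q$.

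\textbf{Step 1: a Hom formula for atomic targets.} For any $R\cO^c(\cF)$-module $X$, let $X_{<}(Q)\subseteq X(Q)$ be the $R\Gamma$-submodule generated by the images $X(\varphi)\colon X(P)\to X(Q)$, where $\varphi\colon Q\to P$ runs over the non-isomorphisms in $\cO^c(\cF)$. Since $M$ is atomic, a natural transformation $X\to M$ is determined by its component at $Q$. That component must be $\Gamma$-equivariant and must kill $X_{<}(Q)$, because such elements are pulled back from objects where $M$ vanishes. Hence
\[
\Hom_{R\cO^c(\cF)}(X,M)\cong\Hom_{R\Gamma}\bigl(X(Q)/X_{<}(Q),\,M(Q)\bigr).
\]
Now apply this to a free resolution $P_*\to\underline R$ made of modules $R\Mor_{\cO^c(\cF)}(-,K)$. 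Then $\lim^*M$ is the cohomology of $\Hom_{R\Gamma}(P_*(Q)/P_{*,<}(Q),M(Q))$. For a free module, the quotient $R\Mor(Q,K)/(\ldots)$ is the permutation module on the morphisms $Q\to K$ that do not factor through a strictly larger centric subgroup.

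\textbf{Step 2: transfer to $\cO_p(\Gamma)$.} Next I would identify the complex from Step 1 with the complex computing $\Lambda^*(\Gamma,M(Q))$. The most concrete route is through the proof of Proposition \ref{pro:CohIsom} together with Proposition \ref{pro:LimBredon}. There, Bredon cohomology of $E_p\Gamma$ with coefficients in the atomic functor $F_{M(Q)}$ becomes
\[
\Lambda^*(\Gamma,M(Q))\cong H^*_\Gamma\bigl(|S_p^e(\Gamma)|,|S_p(\Gamma)|;M(Q)\bigr).
\]
On the fusion side I would use the correspondence
\[
Q\le P\le N_S(Q)\;\longmapsto\;\Out_P(Q)\le\Out_S(Q),
\]
which is a bijection between the overgroups of $Q$ in $N_S(Q)$ and the $p$-subgroups of $\Out_S(Q)$. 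Transporting along it, overgroups $P>Q$ normalizing $Q$ correspond exactly to the nontrivial $p$-subgroups. A morphism $Q\to K$ fails to be "non-factoring" exactly when it extends to some $P>Q$ in $N_S(Q)$. So the non-factoring part should be modelled by the relative pair $(|S_p^e|,|S_p|)$ of $p$-subgroups of $\Gamma$, with the trivial subgroup playing the role of $Q$ itself.

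\textbf{Step 3: compare after restricting to a Sylow subgroup.} Because $M(Q)$ is a $\bbZ_{(p)}\Gamma$-module, a transfer argument as at the end of Proposition \ref{pro:CohIsom} reduces the comparison to $\Out_S(Q)$, provided the identifications are natural in $\Gamma$.

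\textbf{Main obstacle.} The hard step is Step 2: showing that the extension behaviour of morphisms in $\cF$ matches the subgroup structure of $\Gamma$. Concretely, I need that $\alpha\in\Aut_\cF(Q)$ extends to an $\cF$-morphism defined on $P$ whenever $\alpha\Out_P(Q)\alpha^{-1}\le\Out_S(Q)$, and only then. This is precisely the extension axiom (2) of Definition \ref{def:Saturated}, applied with $N_\alpha\ge P$ because $Q$ is fully normalized. I would check this carefully and verify that it turns the $\cF$-side cochain complex into the relative cochain complex of the $\Gamma$-pair above.
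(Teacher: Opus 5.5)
Your Step 1 is correct, but the proof stops where the content of the proposition begins. Step 2 is only a heuristic, and the mechanism it proposes is wrong as stated.

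With $X_<(Q)$ defined as you do, every non-isomorphism $[\varphi]\colon Q\to K$ lies in $X_<(Q)$ for $X=R\Mor_{\cO^c(\cF)}(-,K)$, since it factors through itself. So the quotient is just the permutation module on the isomorphisms $Q\to K$. Your reformulation, that a morphism $Q\to K$ is a non-isomorphism exactly when it extends to some $P$ with $Q<P\le N_S(Q)$, is false. In $\cF=\cF_{D_8}(S_4)$, take $Q$ the normal Klein four-group, $K=S=D_8$, $g=(123)$ and $\varphi=c_g|_Q\colon Q\to D_8$. Then $[\varphi]$ is not an isomorphism, but it has no extension to $N_S(Q)=D_8$. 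Indeed $\Aut_\cF(D_8)=\Inn(D_8)$, and $g\notin D_8=D_8C_{S_4}(Q)$.

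The underlying problem is that overgroups of $Q$ in $N_S(Q)$ only see subgroups of the Sylow subgroup $\Gamma_p=\Out_S(Q)$. They do not see all $p$-subgroups of $\Gamma$ together with the $\Gamma$-action. So you never produce a $\Gamma$-equivariant comparison map, and the transfer argument in Step 3 has nothing to act on. The extension property you single out as the main obstacle (for an automorphism of $Q$, with target $S$) only shows that $V\mapsto\widehat V$ is defined on morphisms. It says nothing about morphisms from $Q$ into an arbitrary centric $K$, and it omits uniqueness of extensions, which is where centricity is really needed.

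The missing idea is to treat $\Mor_{\cO^c(\cF)}(Q,K)$ as a $\Gamma$-set under precomposition and to establish two facts.
\begin{itemize}
\item The stabilizer of $[\varphi]$ is $\varphi^{-1}\Out_K(\varphi Q)\varphi$. This is a $p$-group, and since $C_K(\varphi Q)\le\varphi Q$ it is trivial if and only if $N_K(\varphi Q)=\varphi Q$, that is, if and only if $\varphi$ is an isomorphism.
\item For $V\le\Out_S(Q)$, restriction gives a bijection $\Mor_{\cO^c(\cF)}(\widehat V,K)\to\Mor_{\cO^c(\cF)}(Q,K)^V$. If $V$ fixes $[\varphi]$, then $\widehat V\le N_\varphi$. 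Axiom (2) applies because $\varphi Q$, being $\cF$-centric, is fully centralized. The extension lands in $K$ because $C_S(\varphi Q)\le\varphi Q$. It is unique modulo $\Inn(K)$ because two $\cF$-morphisms agreeing on a centric subgroup differ by conjugation by an element of its center \cite[Proposition A.8]{BLO2}.
\end{itemize}

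This is the input to the argument of \cite{BLO2} that the paper cites; see the functor $\alpha\colon\cO_{\Gamma_p}(\Gamma)\to\cO^c(\cF)$, $V\mapsto\widehat V$, recalled in the proof of Lemma \ref{lem:Part2}. That functor pulls $R\Mor_{\cO^c(\cF)}(-,K)$ back to $R[\Mor_{\cO^c(\cF)}(Q,K)^?]$, which is free because all isotropy groups are $p$-groups. Hence it carries a free resolution $P_*$ of $\underline R$ to a free resolution. Your Step 1 formula, applied on both sides, then shows $\Hom(P_*,M)\cong\Hom(\alpha^*P_*,\alpha^*M)$, because the relevant quotients at $Q$ and at $1$ are both spanned by the free orbits, i.e. the isomorphisms. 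Thus $\lim^*M\cong\Lambda^*(\Gamma,M(Q))$.

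If you prefer your topological route, the same bijection is what makes $(K,[\varphi])\mapsto\mathrm{Stab}_\Gamma[\varphi]$ a $\Gamma$-map into the pair $(S^e_p(\Gamma),S_p(\Gamma))$ that is an equivalence on fixed points of all $p$-subgroups. Either way, that map and this lemma have to be supplied.
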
  

This proposition is the main ingredient in the proof of Theorem \ref{thm:Main}.

%------------------------

\section{Normalizers of \texorpdfstring{$p$}{p}-centric subgroups}\label{sect:Normalizers}

In this section we prove Proposition \ref{pro:Quotient}, which will be used in the proof of Theorem \ref{thm:Main}. Let $\cF$ be a saturated fusion system over a finite $p$-group $S$, and let $G$ be a discrete group realizing $\cF$. 
Recall that a subgroup $Q \le S$ is fully $\cF$-normalized  if $|N_S (Q )| \geq |N_S (P) |$ for every $P \sim _{\cF} Q$. The following is proved by Libman \cite{Libman-WebbConj}.

\begin{lemma}[{Libman \cite[Proposition 3.8]{Libman-WebbConj}}]\label{lem:Libman}
Let $\cF$ be a saturated fusion system over $S$, and $G$ be a discrete group realizing $\cF$. Then for every fully $\cF$-normalized subgroup $Q \le S$, the normalizer $N_S (Q)$ is a Sylow $p$-subgroup of $N_G (Q)$.
\end{lemma}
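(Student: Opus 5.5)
The plan is to verify the definition of Sylow $p$-subgroup for the discrete group $N_G(Q)$ directly. Clearly $N_S(Q)$ is a finite $p$-subgroup of $N_G(Q)$. So it suffices to show that every finite $p$-subgroup $P\le N_G(Q)$ is conjugated into $N_S(Q)$ by some element $x\in N_G(Q)$. The idea is to first conjugate $P$ into $S$ using that $S$ is Sylow in $G$, and then pull the conjugate of $Q$ back to $Q$ using the saturation axioms. Since $\cF\cong\cF_S(G)$, every $\cF$-morphism between subgroups of $S$ is realized by conjugation by an element of $G$.

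\textbf{Step 1: move $PQ$ into $S$.} Since $P$ normalizes $Q$, the product $PQ$ is a finite $p$-group. Because $S$ is a Sylow $p$-subgroup of $G$, there is $g\in G$ with ${}^g(PQ)\le S$. Put $Q'={}^gQ\le S$ and $P'={}^gP$. Then $P'\le N_S(Q')$, and $Q'\sim_\cF Q$ via $c_g$.

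\textbf{Step 2: adjust by an automorphism of $Q$.} Let $\varphi=c_{g^{-1}}|_{Q'}\colon Q'\to Q$, which is a morphism in $\cF$. The group $\varphi\,\Aut_{P'}(Q')\,\varphi^{-1}$ is a $p$-subgroup of $\Aut_\cF(Q)$. Since $Q$ is fully $\cF$-normalized, saturation axiom (1) says that $Q$ is fully $\cF$-centralized and that $\Aut_S(Q)$ is a Sylow $p$-subgroup of the finite group $\Aut_\cF(Q)$. Hence there is $\alpha\in\Aut_\cF(Q)$ with
\[
\alpha\varphi\,\Aut_{P'}(Q')\,(\alpha\varphi)^{-1}\le \Aut_S(Q).
\]
Replace $\varphi$ by $\varphi'=\alpha\varphi$. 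By construction every $y\in P'$ satisfies $\varphi' c_y \varphi'^{-1}=c_z$ for some $z\in N_S(Q)$, so $P'\le N_{\varphi'}$. Also $Q'\le N_{\varphi'}$ trivially.

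\textbf{Step 3: extend and realize.} The image $\varphi'(Q')=Q$ is fully centralized, so saturation axiom (2) extends $\varphi'$ to a morphism $\widetilde\varphi\colon N_{\varphi'}\to S$. Restrict it to $P'Q'$. Since $\cF=\cF_S(G)$, there is $h\in G$ with $\widetilde\varphi|_{P'Q'}=c_h$. Then ${}^{hg}Q=c_h(Q')=\varphi'(Q')=Q$, so $x:=hg\in N_G(Q)$. Moreover ${}^{x}P=c_h(P')\le S$, and this group normalizes $c_h(Q')=Q$, so ${}^xP\le N_S(Q)$, as required.

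The only delicate point is Step 2. A naive choice of $\varphi$ need not have $P'$ inside $N_\varphi$, and the correction by $\alpha$ is exactly where we use that $\Aut_S(Q)$ is Sylow in $\Aut_\cF(Q)$. Step 3 also uses that $\Aut_\cF(Q)$ is finite and that $\alpha$ is itself realized by conjugation. The remaining verifications are routine.
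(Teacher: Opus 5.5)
Your proof is correct and complete. The paper gives no argument of its own for this statement; it simply cites Libman. Your three steps use the same technique the paper uses in the proof of Lemma~\ref{lem:Extension}:
\begin{itemize}
\item conjugate $PQ$ into $S$;
\item pull ${}^gQ$ back to $Q$ by an $\cF$-morphism;
\item check that the conjugated $p$-subgroup lies in $N_\varphi$;
\item extend by the second saturation axiom, and realize the extension by conjugation in $G$.
\end{itemize}

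The one new ingredient is your correction by $\alpha\in\Aut_{\cF}(Q)$ in Step~2. In Lemma~\ref{lem:Extension} the subgroup $R\le C_G(Q)N_S(Q)$ automatically acts on $Q$ through $\Aut_S(Q)$. So $\varphi=c_{g^{-1}}$ already works there, and one even gets a conjugating element $kg\in C_G(Q)$. An arbitrary finite $p$-subgroup of $N_G(Q)$ need not act through $\Aut_S(Q)$, and the Sylow property of $\Aut_S(Q)$ in $\Aut_{\cF}(Q)$ is exactly what repairs this. The cost is that your conjugating element $hg$ satisfies $c_{hg}|_Q=\alpha$ and so lies only in $N_G(Q)$, which is all that is needed.

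Steps~2--3 are a reproof, restricted to $P'Q'$, of the standard fact that for $Q$ fully $\cF$-normalized and $Q'\sim_{\cF}Q$ there is $\psi\in\Mor_{\cF}(N_S(Q'),N_S(Q))$ with $\psi(Q')=Q$. Quoting that fact would reduce your argument to Step~1 plus one line.

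One small bookkeeping point. Finiteness of $\Aut_{\cF}(Q)$ is used in Step~2, to apply Sylow's theorem, not in Step~3. Also, $\alpha$ need not be realized by conjugation separately, since only the extension $\widetilde\varphi$ has to be realized.
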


% \begin{proof} See the proof of  \cite[Proposition 3.8]{Libman-WebbConj}.
% \end{proof}

For a subgroup $Q\le S$, the normalizer fusion system $N_{\cF} (Q)$ is the fusion system over $N_S(Q)$ where for every $P, P' \le N_S(Q)$, the morphism set $\Mor _{N_{\cF } (Q)} (P, P')$ is the set of all  morphisms $\varphi \in  \Mor _{\cF} (P,P')$ which extend to a morphism $\widetilde \varphi : Q P \to Q P'$ in $\cF$ such that $\widetilde \varphi (Q)=Q$. If $Q$ is fully $\cF$-normalized, then $N_{\cF} (Q)$ is a saturated fusion system by a theorem of Puig (see \cite[Theorem 4.33]{Craven-Book}). Moreover, if $\cF$ is realized by a discrete group $G$, then $N_{\cF} (Q)\cong \cF_{N_S(Q)} (N_G(Q))$ (see \cite[Proposition 8.6]{Yalcin-Higher}). 

As a generalization of these results, we consider the normalizer fusion system for a chain of subgroups of $S$. For a chain of finite subgroups $\sigma =(P_0< \cdots< P_n)$ of $S$, and a subgroup $H\le G$, we define $N_H (\sigma)=\cap _{i=0}^n N_H(P_i)$.

\begin{definition}\label{def:NormFusion}
Let $\sigma =(P_0<\cdots< P_n)$ be a chain of subgroups of $S$. The normalizer fusion system $N_{\cF} (\sigma)$ is the fusion system over $N_S(\sigma)$ where for every $P, Q \le N_S(\sigma)$ the morphism set $\Mor _{N_{\cF } (\sigma)} (P, Q)$ is the set of all  morphisms $\varphi \in  \Mor _{\cF} (P,Q)$ which extend to a morphism
$\widetilde \varphi : P_n P \to P_n Q$ in $\cF$ such that
$\widetilde \varphi ( P_i) = P_i$ for all $0\le i \le n$.
\end{definition}

We say two chains
$\sigma =(P_0< \cdots < P_n )$ and $\tau =(P_0'< \cdots < P_n')$
are $\cF$-conjugate if there is a morphism $\varphi :P_n \to P_n'$ in $\cF$ such that $\varphi (P_i)=P_i'$ for all $i$. In this case we write $\sigma \sim _{\cF} \tau$. We say a chain $\sigma=(P_0<\cdots < P_n)$ in $S$ is fully $\cF$-normalized if $|N_S (\sigma)| \geq |N_S(\tau)|$
for every $\tau \sim _{\cF} \sigma$. We have the following:

\begin{proposition}[{\cite[Proposition 10.3]{Yalcin-Higher}}]  
Let $\cF$ be a saturated fusion system over $S$, and let $\sigma=(P_0<\cdots< P_n)$ be a chain of subgroups of $S$ which is fully $\cF$-normalized. Then,
\begin{enumerate}
\item the normalizer fusion system $N_{\cF} (\sigma)$ is saturated, and
\item if $\cF$ is realized by a discrete group $G$, then $N_S(\sigma)$ is a Sylow
$p$-subgroup of $N_G(\sigma)$ and $\cF _{N_S(\sigma)} (N_G(\sigma))=N_{\cF} (\sigma)$. 
\end{enumerate}
\end{proposition}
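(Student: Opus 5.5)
Part (1) is a statement purely about fusion systems, and I would reduce it to Puig's theorem (cited above via \cite[Theorem 4.33]{Craven-Book}) by iterating normalizers. Set $Q_n=P_n$ and regard the chain as a sequence of normalizing conditions. The morphisms of $N_{\cF}(\sigma)$ are those extending to $P_nP\to P_nQ$ while fixing every $P_i$. The plan is to realize $N_{\cF}(\sigma)$ as an iterated normalizer $N_{N_{\cdots N_{\cF}(P_n)\cdots}(P_1)}(P_0)$, using $P_i\le P_n\le N_S(\sigma)$. At each stage I need the relevant subgroup to be fully normalized in the current fusion system.

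To arrange this, I would induct on $n$. First replace $\sigma$ by an $\cF$-conjugate $\tau$ in which $P_n'$ is fully $\cF$-normalized and, inductively, $(P_0'<\cdots<P_{n-1}')$ is fully normalized in $N_{\cF}(P_n')$. Extension axiom arguments, as in the standard proof that fully normalized subgroups can be reached by morphisms defined on normalizers, should then show $|N_S(\tau)|\ge|N_S(\sigma)|$. Combined with the full normalization of $\sigma$, this forces equality. Pulling back along the conjugating map, which is defined on $N_S(\sigma)P_n$, identifies $N_{\cF}(\sigma)\cong N_{\cF}(\tau)$, and saturation of $N_{\cF}(\tau)$ follows from Puig's theorem applied stage by stage.

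For part (2), I would proceed by the same induction, now using Lemma \ref{lem:Libman} and the known case $N_{\cF}(Q)\cong\cF_{N_S(Q)}(N_G(Q))$ from \cite[Proposition 8.6]{Yalcin-Higher}. Let $Q=P_n$, chosen fully normalized after the conjugation step above. Then $N_G(Q)$ realizes the saturated system $N_{\cF}(Q)$ with Sylow subgroup $N_S(Q)$. Also $N_G(\sigma)=N_{N_G(Q)}(\sigma')$, where $\sigma'=(P_0<\cdots<P_{n-1})$ is viewed inside $N_G(Q)$. Applying the inductive hypothesis to $N_G(Q)$ and the chain $\sigma'$ gives that $N_S(\sigma)$ is a Sylow $p$-subgroup of $N_G(\sigma)$. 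It also gives that the fusion of $N_G(\sigma)$ on $N_S(\sigma)$ equals the iterated normalizer system, which is $N_{\cF}(\sigma)$ by the identification from part (1).

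Directly, the inclusion $\cF_{N_S(\sigma)}(N_G(\sigma))\subseteq N_{\cF}(\sigma)$ is clear. Conjugation by $g\in N_G(\sigma)$ extends to $P_nP$ and fixes each $P_i$. The reverse inclusion uses that a morphism of $\cF=\cF_S(G)$ is conjugation by some $g$, and an extension fixing all $P_i$ can be chosen as $c_g$ with $g$ normalizing each $P_i$.

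The main obstacle is the conjugation step, namely showing that a fully $\cF$-normalized chain can be moved so that each stage is fully normalized in the previous normalizer system, without losing the order of $N_S(\sigma)$. One must also be careful that ``fully normalized chain'' transfers correctly through these conjugations. I would first try to use saturation axiom (2) with $N_\varphi\supseteq N_S(\sigma)P_n$ to extend the conjugating maps, which is where the hypothesis that $\sigma$ is fully normalized is actually needed.
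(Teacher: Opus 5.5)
Your plan is correct in outline, but it takes a different route from the standard one. The paper gives no argument of its own here and only cites \cite[Proposition 10.3]{Yalcin-Higher}, so the natural comparison is with the $K$-normalizer proof. Put $K=\{\alpha\in\Aut(P_n) : \alpha(P_i)=P_i \text{ for all } i\}$. Then Definition \ref{def:NormFusion} gives $N^K_S(P_n)=N_S(\sigma)$ and $N^K_{\cF}(P_n)=N_{\cF}(\sigma)$ directly. Since ${}^{\varphi}K$ is the stabilizer of $\varphi\sigma$, the statement ``$P_n$ is fully $K$-normalized'' is literally the hypothesis that $\sigma$ is fully $\cF$-normalized. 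So (1) is an instance of Puig's theorem for $K$-normalizers \cite[Proposition A.6]{BLO2}, with no conjugation step at all. The Sylow claim in (2) then follows from Libman's extension argument (compare the proof of Lemma \ref{lem:Extension}), run with the $K$-version of the extension axiom. Your route avoids $K$-normalizers: it needs only Puig's theorem for ordinary normalizers, and by induction on $n$ it lets you use Lemma \ref{lem:Libman} and \cite[Proposition 8.6]{Yalcin-Higher} as black boxes. The price is a conjugation lemma and some bookkeeping. Your identification of $N_{\cF}(\sigma)$ with the iterated normalizer is correct, and so is your direct proof of $\cF_{N_S(\sigma)}(N_G(\sigma))=N_{\cF}(\sigma)$.

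There are three things to repair.

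First, the containment $P_n\le N_S(\sigma)$ is false in general, because $P_n$ need not normalize $P_i$; take $\sigma=(\langle s\rangle<D_8)$ with $s$ a non-central involution. What actually makes the iterated normalizer well defined is that $P_j\le P_i\le N_S(P_i)$ for $j\le i$, so $P_j\le\bigcap_{i>j}N_S(P_i)$. Also, $N_S(\sigma)P_n$ is still a group because $N_S(\sigma)\le N_S(P_n)$.

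Second, do not try to produce the conjugating map by a single application of axiom (2) on $N_S(\sigma)P_n$. For an arbitrary $\varphi\colon P_n\to P_n'$, nothing forces $N_{\varphi}\supseteq N_S(\sigma)$. Instead, prove by induction on $n$, for an \emph{arbitrary} chain, that some $\psi\in\Hom_{\cF}(N_S(\sigma)P_n,S)$ sends $\sigma$ to a chain $\tau$ that is fully normalized stage by stage:
\begin{itemize}
\item choose $\psi_1\in\Hom_{\cF}(N_S(P_n),N_S(P_n'))$ with $P_n'=\psi_1(P_n)$ fully $\cF$-normalized (the standard lemma, see \cite{AKO-Book});
\item apply the induction hypothesis in the saturated system $N_{\cF}(P_n')$ to the chain $\psi_1(P_0)<\cdots<\psi_1(P_{n-1})$;
\item extend the resulting morphism to $P_n'N_S(\psi_1\sigma)$ fixing $P_n'$, and compose with $\psi_1$.
\end{itemize}
Full normalization of $\sigma$ enters only afterwards, to force $\psi(N_S(\sigma))=N_S(\tau)$. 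It is not needed for the extensions themselves.

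Third, in (2) the inductive hypothesis requires $\tau'=(P_0'<\cdots<P_{n-1}')$ to be fully normalized as a chain in $N_{\cF}(P_n')$. This holds because every $N_{\cF}(P_n')$-conjugate $\rho$ of $\tau'$ gives an $\cF$-conjugate $(\rho<P_n')$ of $\tau$, and $N_S(\rho<P_n')=N_{N_S(P_n')}(\rho)$. Finally, transport the Sylow statement from $\tau$ back to $\sigma$ by writing $\psi=c_g$ with $g\in G$.
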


Let $G$ be a discrete group realizing a saturated fusion system $\cF$. By the first saturation axiom in Definition \ref{def:Saturated}, if $Q$ is a fully $\cF$-normalized subgroup of $S$, then $\Aut _S (Q)$ is a Sylow $p$-subgroup of $\Aut _{\cF} (Q)$. Since $\Aut _{\cF} (Q)\cong N_G(Q)/C_G(Q)$ and $\Aut _S(Q)\cong N_S(Q)/C_S(Q)$, we can conclude that $C_G(Q)N_S(Q)/C_G(Q) \cong N_S(Q)/C_S(Q)$ is a Sylow $p$-subgroup of $N_G(Q)/C_G(Q)$. Moreover, by  Lemma \ref{lem:Libman}, $N_S(Q)$ is a Sylow $p$-subgroup of $N_G(Q)$. We have the following diagram of subgroups:
\[
\begin{tikzpicture}
\node (G) at (2,4) {$G$};
\node (S) at (2,2) {$S$};
\node (NG) at (1,3) {$N_G(Q)$};
\node (H) at (0,2) {$C_G(Q)N_S(Q)$};
\node (CG) at (-1,1) {$C_G(Q)$}; 
\node (NS) at (1,1) {$N_S(Q)$};
\node (CS) at (0,0) {$C_S(Q)$};

\draw (G) -- (NG);
\draw (G) -- (S);
\draw (S) -- (NS);
\draw (NG) -- (H);
\draw (H) -- (NS);
%\draw (H) -- (CG);
%\draw (NS) -- (CS);
\draw (CG) -- (CS);
\draw[double, double distance=1.5pt] (H) -- (CG);
\draw[double, double distance=1.5pt] (NS) -- (CS);

\end{tikzpicture}
\]

%The following also holds:

\begin{lemma}\label{lem:Extension}
Let $\cF$ be a saturated fusion system over $S$, and $G$ be a discrete group realizing $\cF$. If $Q$ is a fully $\cF$-normalized subgroup of $S$, then $N_S(Q)$ is a Sylow $p$-subgroup of $C_G(Q)N_S(Q)$. 
\end{lemma}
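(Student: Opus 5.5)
Since $N_S(Q)\le C_G(Q)N_S(Q)\le N_G(Q)$ and, by Lemma \ref{lem:Libman}, $N_S(Q)$ is a Sylow $p$-subgroup of $N_G(Q)$, the claim should follow by restricting the Sylow property to the intermediate subgroup. The plan is to verify directly, from the definition of a Sylow $p$-subgroup of a discrete group, that every finite $p$-subgroup of $H:=C_G(Q)N_S(Q)$ is $H$-conjugate into $N_S(Q)$.

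Let $P\le H$ be a finite $p$-subgroup. Since $P\le N_G(Q)$, Lemma \ref{lem:Libman} gives $g\in N_G(Q)$ with ${}^gP\le N_S(Q)$. The difficulty is that $g$ lies in $N_G(Q)$ and need not lie in $H$. To correct this, pass to the quotient $\overline{N}=N_G(Q)/C_G(Q)\cong \Aut_{\cF}(Q)$, which is finite. In $\overline N$, the image $\overline H=H/C_G(Q)\cong \Aut_S(Q)$ is a Sylow $p$-subgroup of $\Aut_{\cF}(Q)$ by the first saturation axiom, as recorded just before the lemma.

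Now $\overline{P}\le \overline H$, and $\overline{{}^gP}={}^{\bar g}\overline P\le \overline H$. Both $\overline P$ and ${}^{\bar g}\overline P$ are $p$-subgroups of the finite group $\overline N$ contained in the Sylow subgroup $\overline H$. This alone does not force $\bar g\in\overline H$, so I do not expect to show that $g$ itself lies in $H$. Instead I would replace $P$ by a larger subgroup to obtain more control: take $P_1=P\,C_S(Q)$. This is a finite $p$-subgroup of $H$, since $C_S(Q)$ is normalized by $H$ and $P\le H$ is finite.

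The key step is a counting argument. Let $T\le H$ be a finite $p$-subgroup that contains $N_S(Q)$ up to $H$-conjugacy. Its image $\overline T$ is a $p$-subgroup of $\overline H\cong N_S(Q)/C_S(Q)$, so $|\overline T|\le |N_S(Q)/C_S(Q)|$. Moreover $T\cap C_G(Q)$ is a finite $p$-subgroup of $C_G(Q)$. Since $Q$ is fully normalized, it is fully centralized, so $C_S(Q)$ is a Sylow $p$-subgroup of $C_G(Q)$ by Lemma \ref{lem:Libman} applied to the centralizer (or by \cite[Proposition 8.6]{Yalcin-Higher}-type results). Hence $|T\cap C_G(Q)|\le |C_S(Q)|$, which gives $|T|\le |N_S(Q)|$.

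To conclude, take a finite $p$-subgroup $P\le H$ and enlarge it to a maximal finite $p$-subgroup $P'\ge P$ of $H$. This is possible because the orders are bounded by the previous step. Apply Lemma \ref{lem:Libman} to get $g\in N_G(Q)$ with ${}^gP'\le N_S(Q)$. It remains to show that $g$ can be chosen in $H$. Choose $h\in H$ with $\bar h\overline{P'}\bar h^{-1}$ equal to the image of ${}^gP'$; such an $h$ exists because both images are Sylow in $\overline N$ and hence conjugate by an element of $\overline H$. This holds as long as $|\overline{P'}|=|\overline H|$, and maximality together with the order count gives $|P'|=|N_S(Q)|$, forcing both factors to attain their bounds. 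Then $g^{-1}h$ carries $\overline{P'}$ to itself, and a Frattini-type adjustment inside $C_G(Q)$ shows that ${}^hP'$ and ${}^gP'$ are conjugate by $C_G(Q)$, using that Sylow $p$-subgroups of $C_G(Q)$ are conjugate. Hence ${}^{h'}P\le N_S(Q)$ for some $h'\in H$. I expect this final bookkeeping, choosing the conjugating element inside $H$, to be the main obstacle.
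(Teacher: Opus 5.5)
\paragraph{Verdict.} There is a genuine gap, and it sits exactly where you expected the main obstacle: your argument never produces a conjugating element lying in $H=C_G(Q)N_S(Q)$.

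\emph{The maximality step.} The claim ``maximality together with the order count gives $|P'|=|N_S(Q)|$'' is unjustified. Lemma~\ref{lem:Libman} gives the bound $|P'|\le |N_S(Q)|$ directly, since $P'\le N_G(Q)$, so your counting is unnecessary. But in an infinite group a maximal finite $p$-subgroup need not have maximal order: in $C_p*C_{p^2}$ the factor $C_p$ is a maximal finite $p$-subgroup. Upgrading maximality to maximal order is essentially the Sylow property you are trying to prove, so this step is circular.

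\emph{The closing ``Frattini-type adjustment''.} This is not yet an argument.
\begin{itemize}
\item It needs any two finite $p$-subgroups of $C_G(Q)$ of maximal order to be $C_G(Q)$-conjugate. Lemma~\ref{lem:Libman} does not give this: it only gives $N_G(Q)$-conjugacy into $N_S(Q)\cap C_G(Q)=C_S(Q)$, which bounds orders but says nothing about conjugation inside $C_G(Q)$.
\item That conjugacy statement is itself the special case $R\le C_G(Q)$ of the lemma.
\item Even granting it, you would be left with two finite $p$-subgroups of $N_H(C_S(Q))$ with the same image in $H/C_G(Q)$. Such ``complements'' need not be conjugate in an infinite group; compare the two non-conjugate complements of order $2$ in $D_\infty\cong\bbZ\rtimes C_2$.
\end{itemize}

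\emph{A minor point.} $C_G(Q)$ does not normalize $C_S(Q)$ in general, so $PC_S(Q)$ need not be a subgroup. You never use it, though.

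\paragraph{The missing ingredient.} What is missing is the extension axiom, Definition~\ref{def:Saturated}(2). The paper uses it to correct the conjugator directly, as follows.
\begin{itemize}
\item For a finite $p$-subgroup $R\le H$, choose $g\in G$ with ${}^g(QR)\le S$, and set $\varphi=c_{g^{-1}}\colon {}^gQ\to Q$, a morphism of $\cF$.
\item Each $x\in R$ acts on $Q$ as an element of $\Aut_S(Q)$. The identity $c_{gxg^{-1}}=\varphi^{-1}\circ c_x\circ\varphi$ on ${}^gQ$ then shows ${}^gR\le N_\varphi$.
\item Since $Q$ is fully normalized, hence fully centralized, $\varphi$ extends to $N_\varphi$, and the extension is realized as $c_k$ for some $k\in G$.
\item Then $kg$ centralizes $Q$, so $kg\in C_G(Q)\le H$, and ${}^{kg}R\le S\cap N_G(Q)=N_S(Q)$.
\end{itemize}
This produces the required conjugator in $H$ in one stroke, with no passage to maximal subgroups, order counting, or conjugacy of complements. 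Your outline uses only Lemma~\ref{lem:Libman} and the Sylow condition on $\Aut_S(Q)$. The step where an element of $H$ must be produced is precisely where the extension axiom has to enter.
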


\begin{proof}
Let $H=C_G(Q)N_S(Q)$ and $R$ be a finite $p$-subgroup of $H$. Since $R\le N_G(Q)$, the product $QR$ is a finite $p$-subgroup of $G$. Let $g\in G$ be such that ${}^g (QR) \le S$. Consider the homomorphism 
\[
\varphi=c_{g^{-1}}: {}^g Q \to Q
\]
defined by conjugation with $g^{-1}$. Since ${}^g Q \le {}^g (QR)\le S$, the homomorphism $\varphi$ is a morphism in $\cF$. We claim that ${}^g R\le N_{\varphi}$. 

Since ${}^g R \le {}^g (QR) \le S$ and ${}^g R\le N_G({}^g Q)$, we have ${}^g R \le N_S({}^g Q)$. Let $x\in R$. Then $gxg^{-1}\in N_S({}^g Q)$. Consider the homomorphism $c_{gxg^{-1} } : {}^g Q \to {}^g Q$.
We have 
\[
c_{gxg^{-1}} =\varphi ^{-1} \circ c_x \circ \varphi.
\]
Since $x\in C_G(Q)N_S(Q)$, we have $c_x \in \Aut _S (Q)$.
This implies that $gxg^{-1} \in N_{\varphi}$.
Since this is true for all $x\in R$, we can conclude that ${}^g R \le N_{\varphi}$.
 
Since $Q$ is fully $\cF$-normalized, it is fully $\cF$-centralized, hence by the second saturation axiom in Definition \ref{def:Saturated}, the morphism $\varphi : {}^g Q \to S$ extends to a morphism $\widetilde \varphi : N_{\varphi} \to S$ in $\cF$. 
% By restricting $\widetilde \varphi$ to ${}^g R\le N_{\varphi}$, we obtain a morphism 
% \[
% \widetilde \varphi |_{{}^g R} : {}^g R \to S
% \]
% in $\cF$. 
Let $k\in G$ be such that $c_k=\widetilde \varphi$. 
Then ${}^{kg} R= \widetilde \varphi ({}^g R)\leq S$.

For every $u\in Q$, $$c_{kg} (u) =c_k ( gug^{-1} )=\widetilde{\varphi} (gug^{-1})=  \varphi (gug^{-1} )=c_{g^{-1}} ( gug^{-1})= u.$$
Hence $kg\in C_G (Q)$. Since $R$ normalizes $Q$, $c_{kg} (R)$ normalizes $c_{kg}(Q)=Q$. Thus ${}^{kg} R \le N_G(Q)$. We also have ${}^{kg} R\le S$ by the above argument. Hence ${}^{kg} R \le N_S(Q)$. 

Note that $kg\in C_G (Q) \le H$. Hence we showed that for every finite $p$-subgroup $R\le H$, there is an element $h\in H$ such that ${}^h R \le N_S(Q)$. Therefore $N_S(Q)$ is a Sylow $p$-subgroup of $H=C_G(Q)N_S(Q)$.
\end{proof}

We will also need the following lemma.

\begin{lemma}\label{lem:GroupThy}
    Let $K$ be a normal subgroup of a discrete group $G$ such that $G/K$ is finite.  
    Assume that $G$ has a Sylow $p$-subgroup $S$ such that 
    \begin{itemize}
        \item[(i)] $S$ is a Sylow $p$-subgroup of $SK$,
        \item[(ii)] $SK/K$ is a Sylow $p$-subgroup of $G/K$, and
        \item[(iii)] $S \cap K=1$.
    \end{itemize}
    Then there is a $G/K$-equivariant simplicial isomorphism
$S_p(G)/K \cong S_p(G/K)$,
and hence a $G/K$-equivariant homeomorphism
\[
|S_p(G)|/K \cong |S_p(G/K)|,
\]
where $S_p(G)$ denotes the simplicial complex of strict chains of nontrivial finite $p$-subgroups in $G$.
\end{lemma}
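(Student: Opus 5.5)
The plan is to show that the quotient map $\pi: G \to G/K$ induces a bijection between $K$-orbits of nontrivial finite $p$-subgroups of $G$ and nontrivial $p$-subgroups of $G/K$. This bijection should preserve inclusion in a way compatible with chains, so that it gives an isomorphism of simplicial complexes $S_p(G)/K \cong S_p(G/K)$. $G/K$-equivariance is automatic, since $\pi$ intertwines conjugation. The homeomorphism then follows because the realization of the orbit complex is the orbit space of the realization; here I use that $K$ acts simplicially on the poset complex, and that a group acting on a poset by order-preserving maps acts without inversions.

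The map on vertices is $P \mapsto \pi(P)=PK/K$. First, $\pi(P)\neq 1$ for $P\neq 1$: every finite $p$-subgroup $P$ is conjugate into $S$, and $S\cap K=1$ by (iii), so $P\cap K=1$ and $\pi|_P$ is injective. Consequently $\pi(P)$ is a nontrivial $p$-subgroup of the finite group $G/K$.

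For surjectivity, let $\bar R\le G/K$ be a $p$-subgroup. By (ii) and Sylow in the finite group $G/K$, some conjugate satisfies $\bar R^{\bar g}\le SK/K$. Since $\pi|_S: S\to SK/K$ is an isomorphism, I can lift to $R_0\le S$ with $\pi(R_0)=\bar R^{\bar g}$, and then conjugate back by a lift $g$.

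For injectivity on $K$-orbits, suppose $\pi(P)=\pi(P')$. Then $P,P'\le PK$. The group $PK$ contains the finite $p$-subgroups $P$ and $P'$, and I need that they are $K$-conjugate. Conjugating, I may assume $P\le S$, so $PK\le SK$. Hypothesis (i) says $S$ is Sylow in $SK$, so some $h\in SK$ has ${}^hP'\le S$. Writing $h=sk$, we get ${}^{k}P'\le S$ up to $S$-conjugation, and ${}^kP'$ and $P$ both lie in $S$ with the same image under the injective map $\pi|_S$. Hence ${}^{s^{-1}} \cdot$ adjustments give ${}^{k'}P'=P$ for suitable $k'\in K$. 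I need to keep track of the $S$-part carefully. Since $\pi({}^hP')={}^{\pi(s)}\pi(P')$ and $\pi|_S$ is injective, I get ${}^hP'={}^sP$, so ${}^{s^{-1}sk}P'=P$, that is, ${}^kP'=P$. This step is the main obstacle, and it is where (i) is essential: $S$ must be Sylow in $SK$, not merely in $G$.

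For chains, it must also be checked that for fixed $P_0<\cdots<P_n$ the map is injective on $K$-orbits of chains. Equivalently, the stabilizer data must match: if two chains have the same image, they must be simultaneously $K$-conjugate. I would argue this by induction from the top. First make $P_n=P_n'$ by the vertex argument. Then observe that $\pi$ restricted to $P_n$ is injective, so $P_i=\pi|_{P_n}^{-1}(\bar P_i)=P_i'$ for all $i$. Strictness is preserved by injectivity of $\pi|_{P_n}$. Every chain downstairs lifts by lifting the top term and taking preimages inside it. This yields the simplicial isomorphism and hence the homeomorphism.
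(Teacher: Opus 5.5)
Your proof is correct and follows essentially the same route as the paper. The shared steps are the map $P\mapsto PK/K$, surjectivity by conjugating into $SK/K$ and pulling back along the isomorphism $S\to SK/K$, and injectivity on $K$-orbits of chains: use (i) to find $h=sk\in SK$ with ${}^hP_n'\le S$, so that ${}^kP_n'={}^{s^{-1}}({}^hP_n')\le S$, and then injectivity of $P\mapsto \overline P$ on subgroups of $S$ (or of $P_n$) forces ${}^kP_i'=P_i$ for all $i$. The only blemish is the hedging phrase ``up to $S$-conjugation'' in your vertex step, which is unnecessary because ${}^kP'\le S$ holds on the nose, as you then verify yourself.
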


\begin{proof} The proof is similar to the proof of \cite[Proposition 5.7]{Grodal-Higher}, modified so that the statement still holds for infinite groups. Let $G$, $K$, and $S$ be as in the lemma. Since the conditions given in the lemma are invariant under conjugation, if $T$ is another Sylow $p$-subgroup of $G$, then $T$ also satisfies the conditions of the lemma.

For a finite $p$-subgroup $P\le G$, let $\overline P=PK/K$, and write $\overline G=G/K$. For every Sylow $p$-subgroup $T$ of $G$, we have 
\[
\overline T \cong TK/K \cong T/(T\cap K) \cong T
\]
since $T\cap K=1$. This isomorphism sends a subgroup $P\le T$ to the quotient group $\overline P\le \overline T$. Hence the assignment $P\to \overline P$ gives a bijection between the set of $p$-subgroups of $T$ and the set of $p$-subgroups of $\overline T$.

Every finite chain of $p$-subgroups is contained in a Sylow $p$-subgroup of $G$. Hence the assignment $(P_0<\cdots <P_n) \mapsto (\overline P_0< \cdots < \overline P_n)$ sends a strict chain of nontrivial $p$-subgroups of $G$ to a strict chain of nontrivial $p$-subgroups of $\overline G$. This defines a simplicial map 
%$f': S_p(G) \to S_p(G/K)$. If two chains in $S_p(G)$ are equivalent under $K$-conjugation, then their images under $f'$ are equal in $S_p (G/K)$. Hence $f'$ induces a simplicial map 
\[
f: S_p(G)/K \to S_p (G/K)
\]
that sends a $K$-orbit $[P_0 < \cdots<P_n]_K$ of a chain to $(\overline P_0 < \cdots < \overline P_n)$ in $S_p (G/K)$. 
 
We now show that $f$ is surjective. Let $(R_0 < \cdots < R_n)$ be a 
chain of nontrivial $p$-subgroups in $G/K$. Let $S$ be a Sylow $p$-subgroup of $G$. Since $\overline S$ is a Sylow $p$-subgroup of $\overline G$, there is an element $g\in G$ such that ${}^{gK} R_n \le \overline S$. By replacing $S$ with its conjugate ${}^{g^{-1}} S$, we can assume that $R_n \le \overline{S}$. Since the quotient map restricts to an isomorphism \(S\to\overline S\), define
\[
P_i=\{x\in S\mid xK\in R_i\}
\qquad (0\le i\le n).
\]
Then $(P_0<\cdots<P_n)$ is a chain of nontrivial $p$-subgroups of $G$ with $\overline{P_i}=R_i$ for every $i$. Hence $f$ is surjective.

% Let $P_n=\{ x\in S \, |\, xK\in R_n \}$. Then $P_n$ is a finite $p$-subgroup of $G$ such that $\overline P_n=R_n$. Suppose that there is a chain $P_{i} < \dots < P_n$ of subgroups of $S$ for some $0<i\le n$ such that $\overline P_j=R_j$ for all $i\le j \le n$. Then we can define $P_{i-1}$ to be the subgroup $\{ x \in P_{i} \, | \, xK \in R_{i-1} \}$. By induction, we can conclude that there is a chain of nontrivial $p$-subgroups $(P_0 < \dots < P_n)$ in $G$ such that $(\overline P_0 < \cdots < \overline P_n )=(R_0 < \cdots < R_n)$ in $\overline G$. Hence $f$ is surjective.

Finally we show that $f$ is injective. Suppose that $(P_0< \dots < P_n)$ and $(P'_0 < \dots < P'_n)$ are two chains in $S_p(G)$ such that 
$\overline P_i=\overline P_i'$ for all $i$. Let $S$ be a Sylow $p$-subgroup of $G$ such that $P_n\le S$. Since $\overline P_n=\overline P_n' $, we have $\overline P_n' \le \overline S$. Hence 
\[
P_n' \le P_n' K \le SK.
\]
Since $S$ is a Sylow $p$-subgroup of $SK$, there is an element $g=sk\in SK$
such that ${}^g (P_n')\le S$. Then
\[
{}^k(P_n')= {}^{s^{-1} g} (P'_n) \le  {}^{s^{-1}}S=S.
\]
Hence, $({}^k (P_0') <\cdots < {}^k (P_n'))$ is a chain  in $S$. Now by the injectivity of the mapping $P \to \overline P$ on subgroups of $S$, we can conclude that $P_i={}^k (P_i')$ for all $i$. Thus $[(P_0<\cdots <P_n)]_K=[(P'_0<\cdots < P_n')]_K$. Hence $f$ is injective.

The simplicial map $f$ respects the $G/K$-actions defined by conjugation. Hence $f$ is a $G/K$-equivariant simplicial isomorphism. Passing to geometric realizations gives a $G/K$-equivariant homeomorphism.
\end{proof}

Combining the previous two lemmas, we obtain the following:

\begin{proposition}\label{pro:Quotient} Let $\cF$ be a saturated fusion system over $S$, and $G$ be a discrete group realizing $\cF$. Suppose that $Q\le S$ is an $\cF$-centric, fully $\cF$-normalized subgroup of $S$. Set $W=N_G(Q)/Q$ and $K=QC_G (Q)/Q$. Then there is a $W/K$-equivariant homeomorphism $|S_p (W/K)| \cong |S_p (W)|/K$.
\end{proposition}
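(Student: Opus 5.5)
We apply Lemma \ref{lem:GroupThy} to the group $W=N_G(Q)/Q$ with normal subgroup $K=QC_G(Q)/Q$ and the candidate Sylow $p$-subgroup $\overline S:=N_S(Q)/Q$. Then the conclusion $|S_p(W)|/K\cong |S_p(W/K)|$ is exactly the statement. So the plan is to verify the hypotheses of that lemma one by one, using Lemma \ref{lem:Libman}, Lemma \ref{lem:Extension}, the first saturation axiom, and the centricity of $Q$.

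First, $K$ is normal in $W$ because $C_G(Q)$ and $Q$ are normal in $N_G(Q)$. The quotient $W/K\cong N_G(Q)/QC_G(Q)\cong \Out_{\cF}(Q)$ is finite, since $N_G(Q)/C_G(Q)\cong \Aut_{\cF}(Q)$.

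Next, $N_S(Q)/Q$ is a Sylow $p$-subgroup of $W$. By Lemma \ref{lem:Libman}, $N_S(Q)$ is Sylow in $N_G(Q)$. Given a finite $p$-subgroup $\overline R\le W$, its preimage $R\le N_G(Q)$ is an extension of $Q$ by a finite $p$-group, hence a finite $p$-group. It is therefore conjugate in $N_G(Q)$ into $N_S(Q)$, and passing to the quotient gives the claim.

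Now the three conditions.

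\textbf{(iii).} We have $\overline S\cap K=(N_S(Q)\cap QC_G(Q))/Q=QC_S(Q)/Q$, using Dedekind's law with $Q\le N_S(Q)$. Since $Q$ is $\cF$-centric, $C_S(Q)\le Q$, so this intersection is trivial.

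\textbf{(ii).} The image $\overline S K/K\cong N_S(Q)C_G(Q)/QC_G(Q)$ maps onto the image of $\Aut_S(Q)$ in $\Out_{\cF}(Q)$. By the first saturation axiom, $\Aut_S(Q)$ is Sylow in $\Aut_{\cF}(Q)$, so its image is Sylow in the quotient $\Out_{\cF}(Q)$.

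\textbf{(i).} Here $\overline S K$ is the image of $C_G(Q)N_S(Q)$ in $W$. Lemma \ref{lem:Extension} says $N_S(Q)$ is Sylow in $C_G(Q)N_S(Q)$. Repeating the quotient argument above (preimages of finite $p$-subgroups that contain $Q$ are finite $p$-groups) shows that $N_S(Q)/Q$ is Sylow in $C_G(Q)N_S(Q)/Q=\overline S K$.

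Finally, the Sylow notion in Lemma \ref{lem:GroupThy} concerns arbitrary Sylow $p$-subgroups, and the lemma already notes that the hypotheses are conjugation invariant. So it suffices to check them for $\overline S$, and Lemma \ref{lem:GroupThy} gives the $W/K$-equivariant homeomorphism.

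The main obstacle is condition (i), specifically making sure the quotient-by-$Q$ argument is correct in this infinite setting: Sylow subgroups are defined by conjugating finite $p$-subgroups into them. This step relies on Lemma \ref{lem:Extension}, and the only subtlety is that preimages of finite $p$-subgroups of $W$ are finite $p$-groups, which holds because $Q$ is a finite $p$-group.
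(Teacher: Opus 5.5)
Your proposal is correct and follows the paper's proof essentially step by step. Like the paper, you verify the hypotheses of Lemma \ref{lem:GroupThy} for $W$, $K$ and $N_S(Q)/Q$ using Lemma \ref{lem:Libman}, Lemma \ref{lem:Extension}, the first saturation axiom and the centricity of $Q$. Your Dedekind-law computation for (iii) and your explicit check that preimages in $N_G(Q)$ of finite $p$-subgroups of $W$ are finite $p$-groups are tidier versions of the paper's element-wise argument and its implicit quotient step.
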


\begin{proof} Let $W$ and $K$ be as above, and $T=N_S(Q)/Q$. Since $Q$ is $\cF$-centric, we have $C_S(Q)=Z(Q)$. We have the following diagrams of subgroups where some of the squares are justified in the following argument:
\[
\begin{tikzpicture}

\node (NG) at (2, 4) {$N_G(Q)$} ;
\node (H) at (1,3) {$N_S(Q)C_G(Q)$};
\node (NS) at (2,2) {$N_S(Q)$};
\node (CGQQ) at (0,2) {$Q C_G(Q)$};
\node (CG) at (-1,1) {$C_G(Q)$}; 
\node (Q) at (1,1) {$Q$};
\node (CS) at (0,0) {$C_S(Q)=Z(Q)$};

\node (W) at (7,4)  {$W$};
\node (KT) at (6,3) {$TK$};
\node (T) at (7,2) {$T$};
\node (K) at (5,2) {$K$};
\node (1) at (6,1) {$1$};

\draw (W) -- (KT);
\draw (KT) -- (T);
\draw (K) -- (1);
\draw[double, double distance=1.5pt] (KT) -- (K);
\draw[double, double distance=1.5pt] (T) -- (1);

\draw (NG) -- (H);
\draw[double, double distance=1.5pt] (NS) -- (Q);
\draw[double, double distance=1.5pt] (H) -- (CGQQ);
\draw (H) -- (NS);
\draw (CGQQ) -- (Q) ;
\draw (CG) -- (CS);
\draw[double, double distance=1.5pt] (CGQQ) -- (CG);
\draw[double, double distance=1.5pt] (Q) -- (CS);
\end{tikzpicture}
\]

The product $QC_G(Q)$ is a normal subgroup of $N_G(Q)$, hence $K$ is a normal subgroup of $W$. The quotient group
\[
W/K \cong N_G(Q)/Q C_G(Q)
\]
is isomorphic to $\Out _{\cF} (Q)$, hence is a finite group. 

By assumption, $Q$ is a fully $\cF$-normalized subgroup of $S$, hence by  Lemma \ref{lem:Libman}, $N_S(Q)$ is a Sylow $p$-subgroup of $N_G(Q)$. Therefore $T$ is a Sylow $p$-subgroup of $W$. Moreover, by Lemma \ref{lem:Extension}, $N_S(Q)$ is a Sylow $p$-subgroup of $N_S(Q)C_G(Q)$. Hence $T$ is a Sylow $p$-subgroup of $TK$, since $TK=N_S(Q)QC_G(Q)/Q=N_S(Q)C_G(Q)/Q$.  

Now consider the quotient group $TK/K$. Since $Q$ is fully $\cF$-normalized, $\Aut _S(Q) \cong N_S(Q)/C_S(Q) \cong N_S(Q)C_G(Q)/C_G(Q)$ is a Sylow $p$-subgroup of $\Aut _{\cF}(Q)\cong N_G (Q)/C_G(Q)$. Hence the finite index $|N_G(Q):N_S(Q)C_G(Q)|$ is coprime to $p$.
This proves that
\[
TK/K \cong N_S(Q)C_G(Q)/QC_G(Q)
\]
is a Sylow $p$-subgroup of $W/K\cong N_G(Q)/QC_G(Q)$.

Finally, we claim that $T\cap K=1$. Since $C_S(Q)=Z(Q)$, we have  $N_S(Q)\cap C_G(Q)=Z(Q)$. Let $x=qc\in QC_G(Q) \cap N_S(Q)$, where $q\in Q $ and $c\in C_G(Q)$. Then 
\[
c=q^{-1}x\in C_G(Q) \cap N_S(Q)=Z(Q) \le Q.
\]
Hence $x=qc \in Q$. This gives that $QC_G(Q)\cap N_S(Q)=Q$. Thus
$T\cap K=1$.  

We verified all the assumptions of Lemma \ref{lem:GroupThy} for $W$ with normal subgroup $K$, and a Sylow $p$-subgroup $T$. Hence by Lemma \ref{lem:GroupThy}, there is a $W/K$-equivariant homeomorphism $|S_p(W/K)| \cong |S_p(W)|/K$.
\end{proof}

%-----------------

\section{Proof of Theorem \ref{thm:Main}}\label{sect:ProofMain}

Let $\cF$ be a saturated fusion system over $S$, and $G$ be a discrete group realizing $\cF$. Let $\cC$ denote the collection of $p$-centric subgroups in $G$. Consider the functor 
\[
\overline \xi: \Delta (|\cC|)_G \to  \cO ^c (\cF)^{op}
\]
defined as the composition of $\xi: \Delta (|\cC|)_G \to  \cO_{\cC} (G) ^{op}$, introduced in Definition \ref{def:xi}, with the quotient functor $\cO _{\cC} (G)^{op} \to \overline \cF_{\cC} (G) ^{op} \simeq \cO ^c (\cF) ^{op}$. 

Let $R=\bbZ_{(p)}$ and $M$ be an atomic $R\cO^c (\cF)$-module concentrated at $Q\in \cF^c$. Suppose that $M'$ is the associated $R\cO_{\cC} (G)$-module obtained by precomposing $M$ with the functor $\cO_{\cC} (G) \to \overline \cF _{\cC} (G) \cong \cO^c (\cF)$, and $\cM$ is the equivariant coefficient system on $|\cC|$ defined by precomposing $M'$ with $\xi : \Delta (|\cC|)_G \to \cO _{\cC} (G)^{op}$. Let $W=N_G(Q)/Q$ and $K=QC_G(Q)/Q$. As before, $S_p^e (W)$ denotes the poset of all $p$-subgroups of $W$, and $S_p (W)$ denotes the poset of all nontrivial $p$-subgroups of $W$.

\begin{lemma}\label{lem:Part1}
With the definitions above, there is an isomorphism 
\[
H^* _G (|\cC|; \cM )\cong H^* _{W/K} (|S^e _p (W)|/K, |S_p (W)|/K; M(Q) ).
\]

\end{lemma}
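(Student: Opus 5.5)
We compute $H^*_G(|\cC|;\cM)$ directly from the normalized cochain complex and identify it with a relative cochain complex. Since $M$ is atomic at $Q$, the coefficient $\cM(\sigma)=M(P_0)$ of a strict chain $\sigma=(P_0<\cdots<P_n)$ in $\cC$ vanishes unless $P_0$ is $G$-conjugate to $Q$. So
\[
C^n_G(|\cC|;\cM)\cong \prod_{[\sigma]\in |\cC|_n/G,\ P_0\sim_G Q} \cM(\sigma)^{G_\sigma}.
\]
Each orbit has a representative with $P_0=Q$. Such chains $Q=P_0<P_1<\cdots<P_n$, modulo $N_G(Q)$-conjugacy, correspond to chains $1=P_0/Q<P_1/Q<\cdots<P_n/Q$ of finite $p$-subgroups of $W$ starting at the trivial group, modulo $W$-conjugacy. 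The stabilizers match: $G_\sigma=N_G(\sigma)$ maps onto $W_{\bar\sigma}$. Every $p$-overgroup of the centric subgroup $Q$ is again $p$-centric, so every such chain in $W$ lifts to $\cC$.

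\textbf{Step 1: identify the cochain complex.} The simplices of $|S^e_p(W)|$ not lying in $|S_p(W)|$ are exactly the chains starting at $1$. So $C^n_G(|\cC|;\cM)$ is identified with $\Hom_{RW}(C_n(|S^e_p(W)|,|S_p(W)|),M(Q))$, where $W$ acts on $M(Q)$ through $\Aut_{\cO^c(\cF)}(Q)=\Out_\cF(Q)\cong W/K$. Here I use that $\cM(g,\id)$ for $g\in N_G(Q)$ acts through the image of $g$ in $N_G(Q)/QC_G(Q)$. This holds because $M'$ factors through $\overline\cF_\cC(G)$, where $C_G(Q)$ acts trivially, and $Q$ acts trivially since it induces inner automorphisms.

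\textbf{Step 2: check the coboundaries.} For faces $d_i$ with $i\geq 1$, the face keeps $P_0=Q$, and $\cM(1,d^i)$ is the identity of $M(Q)$. The face $d_0$ deletes $Q$ and produces a chain starting at $P_1$, which is not conjugate to $Q$, so its coefficient is zero. This is exactly the relative cochain complex, in which faces landing in $|S_p(W)|$ are killed. I expect this to be the main obstacle: one must verify that $\xi$ sends the morphisms $(1,d^i)$, $i\ge1$, to the identity on $Q$, and that the equivariant structure maps agree under the identification. Normalized versus unnormalized cochains should be handled as noted in Section~\ref{sect:EquivariantCoefSystem}.

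\textbf{Step 3: pass to the quotient by $K$.} Since $K$ acts trivially on $M(Q)$,
\[
\Hom_{RW}(C_*(X,Y),M(Q))\cong \Hom_{R[W/K]}(C_*(X,Y)_K,M(Q)).
\]
The $K$-coinvariants $C_*(X,Y)_K$ equal the cellular chains $C_*(X/K,Y/K)$ of the quotient pair, because $K$ permutes the simplices. For this we need $X/K$ to be a simplicial complex with the induced cells. Here $K$ does not fix a simplex while moving its vertices, since conjugation preserves the order of a chain, so orbits of simplices are simplices of the quotient. Combining Steps 1--3 gives
\[
H^*_G(|\cC|;\cM)\cong H^*_{W/K}(|S^e_p(W)|/K,|S_p(W)|/K;M(Q)).
\]
By Proposition~\ref{pro:Quotient}, one can further rewrite $|S_p(W)|/K\cong|S_p(W/K)|$ later, but this lemma does not need that.
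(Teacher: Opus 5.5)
There is a genuine gap at the start of your argument. You identify chains $Q=P_0<P_1<\cdots<P_n$ in $\cC$ with chains $1<P_1/Q<\cdots<P_n/Q$ of $p$-subgroups of $W=N_G(Q)/Q$. That presupposes $P_i\le N_G(Q)$, and a $p$-subgroup containing $Q$ need not normalize $Q$.

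For example, take $G=S=D_{16}=\langle r,s\rangle$ (realizing $\cF_S(S)$) and $Q=\langle s,r^4\rangle$. Then $C_S(Q)=Q$, so $Q$ is centric, but $N_S(Q)=\langle r^2,s\rangle\cong D_8$. Here $|\cC_{\ge Q}|$ is the $2$-simplex on $Q<N_S(Q)<S$, while $|S^e_p(W)|$ (with $W\cong C_2$) is a single edge. Your lifting remark only shows that chains in $W$ embed into chains of $\cC_{\ge Q}$, not that every chain arises this way. So the cochain-level identification in Step 1 is false: the two cochain complexes have different terms, and they agree only after passing to cohomology.

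What your Steps 1--2 do correctly establish, once you replace $(|S^e_p(W)|,|S_p(W)|)$ by $(|\cC_{\ge Q}|,|\cC_{>Q}|)$, is a cochain isomorphism $C^*_G(|\cC|;\cM)\cong\Hom_{RW}(C_*(|\cC_{\ge Q}|,|\cC_{>Q}|),M(Q))$. This makes sense because $Q$ fixes every overgroup of $Q$, so $W$ acts.

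The missing ingredient is the equivariant homotopy argument the paper uses. Since $\cC_{\ge Q}=S^e_p(G)_{\ge Q}$, consider the poset map $P\mapsto N_P(Q)=P\cap N_G(Q)$. It has these properties:
\begin{itemize}
\item it is monotone and $N_G(Q)$-equivariant;
\item it satisfies $N_P(Q)\le P$;
\item it is the identity on overgroups lying in $N_G(Q)$;
\item it sends $\cC_{>Q}$ into $S_p(N_G(Q))_{>Q}$, because $N_P(Q)>Q$ whenever $P>Q$.
\end{itemize}
Hence it gives an $N_G(Q)$-equivariant deformation retraction of the pair $(|\cC_{\ge Q}|,|\cC_{>Q}|)$ onto $(|S^e_p(N_G(Q))_{\ge Q}|,|S_p(N_G(Q))_{>Q}|)\cong(|S^e_p(W)|,|S_p(W)|)$. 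A $W$-homotopy equivalence of pairs induces an isomorphism on $H^*_W(-;M(Q))$. Only after this step does your Step 3 (passing to $K$-orbits, using that $K$ acts trivially on $M(Q)$) give the lemma.

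Correspondingly, restricting cochains to chains inside $N_G(Q)$ (the map $\zeta^*$ in the paper) is a quasi-isomorphism, not a bijection of cochains. The coboundary check you flagged as the ``main obstacle'' is routine; this retraction is the real content of the lemma.
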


\begin{proof}
%Let $M$ be an atomic $\bbZ_{(p)}\cO^c(\cF)$-module concentrated at $Q \in \cF^c$. Since $Q$ is defined up to $\cF$-conjugation, we can assume that $Q$ is fully $\cF$-normalized. 
%Let $M'$ be the $\bbZ_{(p)}\cO_{\cC} (G)$-module defined by precomposing $M$ with the quotient functor $\cO_{\cC} (G) \to \cO ^c (\cF)$. Then $M'$ is also an atomic functor concentrated at $Q$.
%Let $M$ be an atomic $R\cO^c (\cF)$-module concentrated at $\cF$-centric subgroup $Q$. 
For each $n\geq 0$, the normalized cochain complex has the form 
\begin{align*}
     C^n _G (|\cC| ; \cM ) & =\Bigl (\prod _{(H_0 < \cdots < H_n ) \in |\cC|_n} M' (H_0) \Bigr )^G  \cong \Bigl (\prod _{(Q=H_0 <  \cdots < H_n ) \in |\cC|_n} M(Q) \Bigr )^{N_G(Q)} 
     \\
& \cong \Hom _{RN_G(Q) } \bigl ( C_n(|\cC _{\geq Q} |, |\cC _{>Q}| ), M(Q) \bigr ) \cong \Hom _{RW } \bigl ( C_n(|\cC _{\geq Q} |, |\cC _{>Q}| ), M(Q) \bigr ). 
\end{align*}
Since all the isomorphisms above commute with coboundary maps, this gives an isomorphism of cochain complexes, hence we have the isomorphism of the corresponding cohomology groups
%Applying Lemma \ref{lem:EquivCoh} to $R\cO_{\cC} (G)$-module $M'$, we have an isomorphism
\[
H^* _G (|\cC| ; \cM) \cong H^* _{W} (|\cC _{\geq Q} |, |\cC _{>Q}| ; M(Q)).
\]
%where $W=N_G(Q)/Q$. 

Since the collection of $p$-centric subgroups is closed under passage to $p$-overgroups, $\cC _{\geq Q} = S^e_p (G) _{\geq Q}$ and $\cC_{>Q} =S_p(G)_{>Q}$. There is an $N_G(Q)$-equivariant homotopy equivalence 
\[
|S_p ^e (G) _{\geq Q}| \simeq |S^e_p (N_G(Q) )_{\geq Q}|
\]
given by the poset homotopy induced by the poset map $P \to N_P(Q)$ (see \cite[Lemma 3.3.18]{Smith-2011}). Passing to $Q$ quotients, this gives a $W$-equivariant homotopy equivalence $|S^e_p (G) _{\geq Q}| \simeq |S_p ^e (W)|$. For every $P>Q$, we have $N_P(Q)>Q$, hence this homotopy equivalence restricts to a
$W$-equivariant homotopy equivalence  $|S_p(G)_{>Q}|\simeq |S_p (W)|$, hence we obtain a $W$-equivariant homotopy equivalence of pairs $(|\cC_{\geq Q}|, |\cC _{>Q}| )\simeq (|S^e_p (W)|, |S_p (W) |)$. 
This gives an isomorphism of cohomology groups
$$H^* _G (|\cC| ; \cM) \cong H^* _{W} (|S_p ^e (W) |, |S_p (W) | ; M(Q) ).$$ 
Since $C_G(Q)$ acts trivially on $M(Q)$, by taking $K$-orbits, we obtain
\begin{equation*}\label{eqn:Part1}
H^* _G (|\cC| ; \cM) \cong H^* _{W/K} (|S_p ^e (W)|/K , |S_p (W) |/K ; M(Q)). 
\end{equation*}
%where $K=QC_G(Q)/Q$. %Note that $W/K \cong N_G(Q)/QC_G(Q) \cong \Out _{\cF} (Q) =\Gamma$.

The isomorphism above is induced by the cochain map defined on unnormalized cochains as
\[
\zeta^* : C_G ^* (|\cC|; \cM) \to C^* _{W/K} (|S_p^e (W)|/K, |S_p (W)|/K; M(Q))
\]
defined by 
\[
\zeta^* (f)([1=U_0 \le \cdots \le U_n]_K)=f(Q=Q_0\le \cdots \le Q_n)
\]
for every $f\in C_G^* (|\cC| ; \cM)$, and every $[1=U_0\le \cdots \le U_n]_K \in |S_p ^e (W)|/K$, where the subgroups $Q_i$ are defined such that $Q\le Q_i \le N_G(Q)$ and $Q_i/Q= U_i$ for all $i$.
\end{proof}

For every  $R\cO ^c \cF$-module $M$, there is an isomorphism $\lim ^* _{\cO ^c (\cF)} M \cong H^* (\cO^c (\cF); M)$. The cohomology groups $H^* (\cO ^c (\cF) ; M)$ can be calculated using the standard projective resolution $P_*$ for the constant functor $\underline R$ as an $R\cO ^c (\cF)$-module. This allows us to express $H^* (\cO ^c (\cF); M)$ as the cohomology of the cochain complex $C^* (\cO ^c (\cF); M)$ where for each $n\geq 0$, 
\[
C^n (\cO ^c (\cF) ; M) = \prod _{(P_0 \maprt{\alpha_1}  \cdots \maprt{\alpha_n} P_n)\in N_n (\cO ^c (\cF))} M(P_0).
\]

Now we will define a cochain map
\[
\Psi^* : C^* _G (|\cC|; \cM)  \to C^* (\cO ^c (\cF); M).
\]
For this definition we take $C_G^* (|\cC|; \cM)$ as the unnormalized cochain complex where for each $n\geq 0$,
\[
C_G ^n (|\cC|; \cM)=\Bigl (\prod _{(Q_0\le\cdots \le Q_n) \in N_n(\cC)} M(Q_0) \Bigr ) ^G.
\]
Let $(P_0 \maprt{\alpha_1 } \cdots \maprt{\alpha_n} P_n )$ be a simplex in $N_n (\cO ^c (\cF))$. For each $i=1,\dots, n$, choose a morphism $\varphi_i : P_{i-1} \to P_i$ in $\cF$ such that $\alpha_i=[\varphi_i]$. For each $j=0, \dots , n-1$, define $\theta _j= \varphi_n \circ \cdots \circ \varphi_{j+1}$, and set $\theta_n=\id$. Let $Q_i=\theta _i (P_i)$ for all $i$. Viewing $\theta_0$ as an isomorphism $P_0\to Q_0$, we define $\Psi^n$ by setting
\[
\Psi ^n (f) (P_0 \maprt{\alpha_1} \cdots \maprt{\alpha_n} P_n)=M(\theta_0) f (Q_0\le \dots\le Q_n)
\]
for each $f\in C^n_G (|\cC|; \cM)$. Since $f$ is $G$-invariant, the definition of $\Psi^n (f)$ does not depend on different choices of representative morphisms $\varphi_i$ for $\alpha_i$.
From its definition, it is easy to see that $\Psi^*$ defines a cochain map. Therefore, it induces a homomorphism on cohomology
\[
\Psi^* : H^* _G (|\cC|; \cM) \to H^* (\cO ^c (\cF); M).
\]

For the cohomology groups $H^* (\cO^c (\cF), M)$ with coefficients in an atomic functor, we have the following observation.

\begin{lemma}\label{lem:Part2}
Let $\cF$ be a saturated fusion system over $S$, and $Q$ be an $\cF$-centric, fully $\cF$-normalized subgroup of $S$. Let $\Gamma=\Out_{\cF} (Q)$, and $M$ be an atomic $\bbZ_{(p)} \cO ^c (\cF)$-module concentrated at $Q$. Then there is an isomorphism
\begin{equation*}\label{eqn:Part2}
H^* (\cO ^c (\cF); M) \cong 
H^* _{\Gamma}  ( |S_p^e (\Gamma)| , |S_p (\Gamma)| ; M (Q)). 
\end{equation*} 
\end{lemma}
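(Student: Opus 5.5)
The plan is to pass through $\Lambda^*$ in two steps. First, Proposition \ref{pro:HigherFusionOrbit} gives
\[
H^*(\cO^c(\cF);M)\cong \underset{\cO^c(\cF)}{\lim {}^*} M\cong \Lambda^*(\Gamma, M(Q)).
\]
Second, I will identify $\Lambda^*(\Gamma,M(Q))$ with the relative equivariant cohomology of the pair $(|S_p^e(\Gamma)|,|S_p(\Gamma)|)$. This is the finite group $\Gamma$ case of the observation underlying Grodal's argument. Since $\Gamma$ is finite, Proposition \ref{pro:CohIsom} is available for it.

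By definition, $\Lambda^*(\Gamma,M(Q))=\lim^*_{\cO_p(\Gamma)}F_{M(Q)}$. Proposition \ref{pro:LimBredon} with $\cC$ the collection of all $p$-subgroups of $\Gamma$ identifies this with the Bredon cohomology $\underline H^*_\Gamma(E_p\Gamma^?;\widetilde{F}_{M(Q)})$. Here I would compute directly with the cochain complex $\Hom_{R\cO_p(\Gamma)}(C_*(E_p\Gamma^?;R),F_{M(Q)})$. The chain module $C_n(E_p\Gamma^?;R)$ is the free module $R[X_n^?]$, where $X_n$ is the $\Gamma$-set of $n$-cells. By Yoneda,
\[
\Hom_{R\cO_p(\Gamma)}(R[X_n^?],F_{M(Q)})\cong \prod_{[\sigma]\in X_n/\Gamma} F_{M(Q)}(\Gamma_\sigma)^{\,\cdots}.
\]
This vanishes on every orbit whose isotropy $\Gamma_\sigma$ is a nontrivial $p$-subgroup. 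The surviving factors are exactly the free cells, that is, the cells of $E_p\Gamma$ not in the singular set $(E_p\Gamma)^s$. For those cells the factor is simply $\Hom_{R\Gamma}(R\Gamma\sigma, M(Q))$. I then check that the coboundaries agree. For a free cell, the boundary terms landing in singular cells are killed, because $F_{M(Q)}$ is zero there. Hence the complex is isomorphic to $\Hom_{R\Gamma}(C_*(E_p\Gamma,(E_p\Gamma)^s;R),M(Q))$, and
\[
\Lambda^*(\Gamma,M(Q))\cong H^*_\Gamma(E_p\Gamma,(E_p\Gamma)^s;M(Q)).
\]
In this step I need to make sure that the atomic functor's structure maps play no role. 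The only morphisms that matter are automorphisms of the trivial subgroup, which act through the $\Gamma$-action on $M(Q)$. Morphisms into $1$ from nontrivial subgroups do not exist in $\cO_p(\Gamma)$ as maps $\Gamma/P\to\Gamma/1$, so atomicity gives a genuine quotient complex.

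Finally, since $M(Q)$ is a $\bbZ_{(p)}\Gamma$-module and $\Gamma$ is finite, Proposition \ref{pro:CohIsom} gives
\[
H^*_\Gamma(E_p\Gamma,(E_p\Gamma)^s;M(Q))\cong H^*_\Gamma(|S_p^e(\Gamma)|,|S_p(\Gamma)|;M(Q)).
\]
Composing the three isomorphisms proves the lemma.

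The main obstacle is the middle identification of the Bredon cochains of an atomic functor with relative equivariant cochains. The care needed there is in matching the orientation and sign conventions, and in confirming that the Yoneda isomorphism is compatible with the quotient by the singular subcomplex. Everything else is citation of earlier results.
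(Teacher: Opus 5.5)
Your proposal is correct and follows the paper's proof step for step. The chain is: Proposition \ref{pro:HigherFusionOrbit}, then Proposition \ref{pro:LimBredon}, then the identification of Bredon cohomology with coefficients in the atomic functor $F_{M(Q)}$ with $H^*_\Gamma(E_p\Gamma,(E_p\Gamma)^s;M(Q))$, and finally Proposition \ref{pro:CohIsom}. Your Yoneda-lemma check of the middle step fills in what the paper only asserts; the paper additionally records the isomorphism at cochain level (the zigzag through $\alpha^*$ and $\eta^*$), which the lemma does not need but the proof of Proposition \ref{pro:MainAtomic} later uses.
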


\begin{proof}
Let $E_p \Gamma$ denote the universal space for $\Gamma$ over all $p$-subgroups of $\Gamma$. For $E_p\Gamma$ we will use the simplicial model given by the nerve of the category $\cE _p (\Gamma)$, whose isotropy subgroups are $p$-subgroups of $\Gamma$. 
By Propositions \ref{pro:HigherFusionOrbit} and \ref{pro:LimBredon}, we have 
$$H^* (\cO ^c (\cF) ; M) \cong \underset{\cO ^c (\cF) }{\lim {}^*} M  \cong \Lambda ^* (\Gamma,  M (Q) ) \cong \underline{H} ^*_{\Gamma} (E_p \Gamma ^?; F_{M(Q)} ).$$
where $F_{M(Q)}$ denotes the atomic functor concentrated at $1$ with value $M(Q)$. The Bredon cohomology on the right is isomorphic to the equivariant cohomology
\[
H_{\Gamma}  ^*  ( E_p \Gamma , (E_p \Gamma)^s ; M (Q))\]
where $(E_p \Gamma )^s =\bigcup _{1\neq P \le \Gamma} (E_{p} \Gamma)^P$ is the singular set of the $\Gamma$-action on $E_{p} \Gamma$.

Since $\Gamma$ is a finite group and $M(Q)$ is a $\bbZ _{(p)} \Gamma$-module, we can apply Proposition \ref{pro:CohIsom}, and obtain an isomorphism
$$ H ^* _{\Gamma}  (E_p \Gamma , (E_p \Gamma)^s ; M(Q)) \cong H^* _{\Gamma}  ( |S_p^e (\Gamma)|, |S_p (\Gamma)| ; M(Q)).$$ 
This isomorphism is induced by the functor $\cE _p \Gamma \to S_p ^e (\Gamma)$ that sends $(\Gamma/P, gP)$ to ${}^g P$.
Combining the isomorphisms above, we obtain that
\begin{equation*}\label{eqn:Part2}
H^* (\cO ^c (\cF); M) \cong 
H^* _{\Gamma}  ( |S_p^e (\Gamma)| , |S_p (\Gamma)| ; M (Q)). 
\end{equation*}

From the descriptions above, we see that the isomorphism above is induced by a zigzag of cochain maps.
To describe these cochain maps, observe that the isomorphism given in Proposition \ref{pro:HigherFusionOrbit} is constructed using the explicit functor $\alpha: \cO _{\Gamma_p} (\Gamma) \to \cO ^c (\cF)$, where $\Gamma _p=\Out _S (Q)$ is a Sylow $p$-subgroup of $\Gamma$, and $\cO _{\Gamma_p} (\Gamma)$ is the orbit category with objects subgroups of $\Gamma_p$. This functor sends $V\le \Gamma_p$ to 
\[
\widehat V=\{ x\in N_S(Q) : [c_x|_Q] \in V \} \le N_S(Q)
\]
(see \cite[Proposition 3.2]{BLO2}). %Note that since $Q$ is $\cF$-centric, $\widehat V$ is $\cF$-centric. 
The functor $\alpha$ induces a cochain map $\alpha^* : C^* (\cO^c (\cF); M) \to C^* (\cO _{\Gamma_p} (\Gamma); F_{M(Q)})$. 

Now consider the cochain map $\eta ^* : C^* _{\Gamma}  ( |S_p^e (\Gamma)| , |S_p (\Gamma)| ; M (Q)) \to C^* ( \cO _{\Gamma_p} (\Gamma); F_{M(Q)})$ defined by 
\[
\eta^*(f) (V_0 \maprt{\beta_1} \cdots \to V_{n-1} \maprt{\beta_n} V_n)= M (\gamma_n\cdots \gamma_1)f({}^{\gamma_n\cdots \gamma_1} V_0\le \cdots \le  {}^{\gamma_n} V_{n-1} \le V_n)
\]
where for each $i$, the morphism $\beta_i$ is given by the $\Gamma$-map $\Gamma/V_{i-1} \to \Gamma/V_i$ defined by $V_{i-1} \to \gamma_i ^{-1} V_i$, for all $i$, and for  $\gamma \in \Gamma$, $M(\gamma): M(Q)\to M(Q)$ is the $\bbZ_{(p)}$-module homomorphism induced by the morphism $\gamma: Q \to Q$ in $\Aut_{\cO^c (\cF)} (Q)\cong \Out _{\cF}(Q)=\Gamma$. 

The orbit-category cochains in $C^*(\cO_p (\Gamma); F_{M(Q)})$ can be identified with the equivariant cochains on $E_p \Gamma$ relative to its singular set. With this identification, the map $\eta^*$ is the pullback induced by the map in Proposition \ref{pro:CohIsom}, followed by restriction to $\cO_{\Gamma_p} (\Gamma)$. Hence by Proposition \ref{pro:CohIsom} and the equivalence $\cO_{\Gamma_p} (\Gamma)\simeq \cO _p (\Gamma)$, we can conclude that $\eta^*$ induces an isomorphism on cohomology. The isomorphism stated in the lemma
% \[
% H^* (\cO ^c (\cF); M) \cong 
% H^* _{\Gamma}  ( |S_p^e (\Gamma)| , |S_p (\Gamma)| ; M (Q))
% \]
is equal to the composition $H(\eta^*)^{-1} \circ H(\alpha^*)$ where $H(\alpha^*)$ and $H(\eta^*)$ are the induced homomorphisms on cohomology.
\end{proof}

%Our main result in this section is the following:

Now we prove our key result in this section.

\begin{proposition}\label{pro:MainAtomic} If $M$ is an atomic $\bbZ _{(p)} \cO ^c (\cF)$-module concentrated at an $\cF$-centric subgroup $Q \le S$, and $\cM$ is the equivariant coefficient system on $|\cC|$ associated to $M$, then the homomorphism $\Psi^*$ defined above, is an isomorphism.
\end{proposition}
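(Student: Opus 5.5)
Since $M$ is atomic, we may replace $Q$ by an $\cF$-conjugate and assume that $Q$ is fully $\cF$-normalized; set $\Gamma=\Out_{\cF}(Q)\cong W/K$, where $W=N_G(Q)/Q$ and $K=QC_G(Q)/Q$. The plan is to show that $\Psi^*$ fits into a commutative diagram of cohomology groups in which every other map is already known to be an isomorphism:
\[
\begin{tikzcd}
H^*_G(|\cC|;\cM) \arrow[r,"\Psi^*"] \arrow[d,"H(\zeta^*)"'] & H^*(\cO^c(\cF);M) \arrow[d,"H(\alpha^*)"] \\
H^*_{W/K}(|S^e_p(W)|/K,|S_p(W)|/K;M(Q)) \arrow[r,"\cong"] & H^*(\cO_{\Gamma_p}(\Gamma);F_{M(Q)}) .
\end{tikzcd}
\]
The left vertical map is the isomorphism of Lemma \ref{lem:Part1}. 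The right vertical map is the isomorphism coming from Proposition \ref{pro:HigherFusionOrbit}. For the bottom map, first identify $|S^e_p(W)|/K$ with $|S^e_p(\Gamma)|$ and $|S_p(W)|/K$ with $|S_p(\Gamma)|$. This uses Proposition \ref{pro:Quotient} for the pair, together with the analogous bijection on chains that begin with the trivial subgroup, which is proved in exactly the same way as Lemma \ref{lem:GroupThy}. After this identification, the bottom map is $H(\eta^*)$, which is an isomorphism by Lemma \ref{lem:Part2}. Once the square commutes, $\Psi^*$ is a composite of isomorphisms and is therefore an isomorphism.

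The main work is to prove commutativity at the cochain level:
\[
\alpha^*\circ\Psi^* = \eta^*\circ\bar\zeta^*,
\]
where $\bar\zeta^*$ is $\zeta^*$ followed by the identification with $\Gamma$-chains. Take a simplex $(V_0\maprt{\beta_1}\cdots\maprt{\beta_n}V_n)$ in $\cO_{\Gamma_p}(\Gamma)$ and write each $\beta_i$ by means of elements $\gamma_i\in\Gamma$. Under $\alpha$ this simplex goes to $(\widehat V_0\to\cdots\to\widehat V_n)$. Because $M$ is atomic, only the chains with $V_0=1$ contribute, and in that case $\widehat V_0=Q$.

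Next, choose representatives $\varphi_i\in\Mor_{\cF}(\widehat V_{i-1},\widehat V_i)$ of $\alpha(\beta_i)$ that are conjugations by elements $g_i\in N_G(Q)$ lifting $\gamma_i$. This is possible because any lift of $\gamma_i$ to $\Aut_{\cF}(Q)$ extends to the $\widehat V$'s, which can be seen either from saturation or from the way $\alpha$ is constructed in \cite[Proposition 3.2]{BLO2}. With these choices, $\theta_j$ is conjugation by $g_n\cdots g_{j+1}$. Then $\Psi^n(f)$ evaluated at this simplex equals
\[
M(\gamma_n\cdots\gamma_1)\,f\bigl(Q\le {}^{g_n\cdots g_2}\widehat V_1\le\cdots\le \widehat V_n\bigr).
\]
By the definition of $\zeta^*$, the value $f\bigl(Q\le {}^{g_n\cdots g_2}\widehat V_1\le\cdots\le \widehat V_n\bigr)$ is $\zeta^*(f)$ evaluated at the $K$-orbit of the chain of images in $W$. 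Under the isomorphism of Proposition \ref{pro:Quotient}, that orbit corresponds to the chain $({}^{\gamma_n\cdots\gamma_1}V_0\le\cdots\le V_n)$ in $\Gamma$. This is precisely the formula defining $\eta^*(\bar\zeta^*f)$.

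I expect the main obstacle to be this bookkeeping. Three points need care: the choice of lifts must be compatible, independence of the choices must follow from $G$-invariance of $f$ together with triviality of the $C_G(Q)$-action, and the chain ${}^{g}\widehat V_i\le N_G(Q)$ must correspond to ${}^{\gamma}V_i$ under the quotient map $N_S(Q)/Q\to\Gamma_p$, whose kernel is $K\cap T=1$ as shown in the proof of Proposition \ref{pro:Quotient}. A further subtlety is that Lemma \ref{lem:Part2} identifies the right-hand side through a zigzag. It therefore suffices to prove commutativity after applying $\alpha^*$, and then to invert $H(\eta^*)$.
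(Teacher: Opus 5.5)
Your proposal is correct and is essentially the paper's proof. It uses the same reduction to a fully $\cF$-normalized $Q$ and the same cochain maps: $\zeta^*$, $\kappa^*$ (the cone extension of the homeomorphism from Proposition \ref{pro:Quotient}), $\eta^*$ and $\alpha^*$. It also verifies the same identity $\alpha^*\circ\Psi^*=\eta^*\circ\kappa^*\circ\zeta^*$ on chains with $V_0=1$, with both sides vanishing otherwise. The only difference is cosmetic: you keep $H(\alpha^*)$ as a separate vertical arrow, whereas the paper folds it into the bottom isomorphism $\xi_2=H(\eta^*)^{-1}\circ H(\alpha^*)$ of Lemma \ref{lem:Part2}.
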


\begin{proof} Replacing $Q$ by an $\cF$-conjugate subgroup, we may assume that $Q$ is fully $\cF$-normalized.
Let $W=N_G(Q)/Q$, $K=QC_G(Q)/Q$, and $\Gamma=\Out_{\cF} (Q)\cong W/K$. 
Consider the following diagram
\begin{equation}\label{eqn:CommDiagram}
\vcenter{\hbox{
\xymatrix{ H_G ^* (|\cC|; \cM) \ar[r]^-{\xi _1 \cong} \ar[d]^{\Psi^*} & H^* _{\Gamma} (|S_p ^e (W)|/K , |S_p (W) |/K ; M(Q) )  \ar[d]^{\kappa^*} \\
H^* (\cO ^c (\cF) ; M) \ar[r]^-{\xi _2 \cong}& H^* _{\Gamma}  ( |S_p^e (\Gamma)| , |S_p (\Gamma)| ; M (Q)) }
}}
\end{equation}
where the horizontal maps $\xi_1$ and $\xi_2$ are the isomorphisms proved in Lemmas \ref{lem:Part1} and \ref{lem:Part2}.
The right vertical map $\kappa ^*$ is the homomorphism defined as follows:  By Proposition \ref{pro:Quotient}, there is a $\Gamma$-equivariant homeomorphism $f: |S_p(W)|/K \to |S_p(\Gamma)|$. This homeomorphism 
extends to the cones $|S_p ^e (W)|/K$ and $|S^e _p (\Gamma)|$, and gives a $\Gamma$-equivariant homeomorphism of pairs
\[
 \widetilde f: (|S_p ^e (W) |/K , |S_p (W)|/K ) \to (|S_p^e (\Gamma)| , |S_p (\Gamma)|). 
\]
The inverse $(\widetilde f)^{-1}$ of this homeomorphism defines a cochain map
\[
\kappa ^*: C^* _{\Gamma} (|S_p ^e (W)|/K, |S_p (W)|/K; M(Q)) \to  C^* _{\Gamma} (|S^e _p (\Gamma)|, |S_p (\Gamma)|; M(Q))
\]
which is an isomorphism of cochain complexes. In particular, $\kappa^*$ induces an isomorphism of corresponding cohomology groups.

If the diagram in \eqref{eqn:CommDiagram} commutes, then the isomorphisms established above will imply that  $\Psi^*$ is also an isomorphism. To prove the commutativity of the above diagram, we consider corresponding maps on the cochain level:
\[
\xymatrix{C^* _G (|\cC|; \cM) \ar[d]^{\Psi^*} \ar[rr]^{\zeta ^*} & & 
C^* _{\Gamma} (|S_p ^e (W)|/K , |S_p (W) |/K ; M(Q) )  
\ar[d]^{\kappa^*} \\
C^* (\cO ^c (\cF) ; M) \ar[r]^-{\alpha^*} & C^* (\cO _{\Gamma_p} (\Gamma) ; F_{M(Q)}) &  C^* _{\Gamma}  ( |S_p^e (\Gamma)| , |S_p (\Gamma)| ; M (Q)) 
\ar[l]_-{\eta^*}}
\]
where $\zeta^*$, $\alpha^*$, and $\eta^*$ are the cochain maps defined in the proofs of Lemmas \ref{lem:Part1} and \ref{lem:Part2}.
% Then the composition $\kappa^* \circ \xi_1 : C_G ^* (|\cC|; \cM ) \to E^*$
% is the cochain map defined by
% \[
% (\kappa ^* \circ \xi_1 ) (f) (1=V_0\le \cdots \le V_n\le \Gamma)=f(Q=Q_0\le \cdots \le Q_n \le N_G(Q))
% \]
% for every chain $(1=V_0\le \cdots \le V_n)$ in $|S_p^e (\Gamma)|$, where the subgroups $Q_i$ are such that $V_i =C_G(Q) Q_i/ C_G(Q) Q$ for all $i$.

%The commutativity of the above diagram can be verified by direct calculation. 
We will show that $\eta^* \circ \kappa^* \circ \zeta^*=\alpha^* \circ \Psi^*$. Let $f\in C^* _G (|\cC|; \cM)$, and $(V_0 \maprt{\beta_1} \cdots \maprt{\beta_n } V_n)$ be a chain in $\cO _{\Gamma_p} (\Gamma)$ with $V_0=1$. For each $i$, choose $\gamma_i\in\Gamma$ such that $\beta_i(V_{i-1})=\gamma_i ^{-1} V_i$. Let
$\varphi_i:\widehat V_{i-1}\to\widehat V_i$
be a representative of $\widehat\beta_i=\alpha(\beta_i)$ such that
$\varphi_i(Q)=Q$ and $[\varphi_i|_Q]=\gamma_i$.
We have 
\begin{align*}
(\alpha^* \circ \Psi ^* ) (f) (1=V_0 \maprt{\beta_1} \cdots \maprt{\beta_n } V_n) & =
\Psi ^* (f) (Q=\widehat V_0 \maprt{\widehat \beta_1} \cdots \maprt{\widehat \beta_n } \widehat V_n) \\
& = M(\theta_0) f(Q=R_0\le \cdots \le R_n)
\end{align*}
where $\theta _j=\varphi _n \circ \cdots \circ \varphi _{j+1}$ for $j=0,\dots, n-1$, $\theta _n=\id$, and $R_j=\theta _j (\widehat V_j)$ for $j=0, \dots, n$. From the definitions of subgroups $R_j$ and elements $\gamma_j$, we have
\[
QC_G(Q)R_j/QC_G(Q)
=
{}^{\gamma_n\cdots \gamma_{j+1}}V_j
\]
and $M(\theta_0|_Q)=M(\gamma_n\cdots \gamma_1)$. This gives
\begin{align*}
& (\eta^* \circ \kappa ^* \circ \zeta^*) (f) (1=V_0 \maprt{\beta_1} \cdots \maprt{\beta_n } V_n) \\
&=M(\gamma_n\dots \gamma_1) (\kappa^*\circ \zeta^*)(f) (1={}^{\gamma_n\dots \gamma_1}V_0 \le \cdots \le {}^{\gamma_n } V_{n-1} \le V_n ) \\
& =M(\gamma_n\cdots\gamma_1)f(Q=R_0\le\cdots\le R_n).
\end{align*}
Thus both calculations give the same value. For chains with $V_0\neq 1$, both composites have the value zero. Thus we can conclude that $(\alpha^* \circ \Psi^*)(f)=(\eta^* \circ \kappa^* \circ \zeta^*) (f)$. This gives that the induced maps on cohomology satisfy
$\kappa^*\circ\xi_1=\xi_2\circ\Psi^*$,
so the diagram in \eqref{eqn:CommDiagram} commutes.
This completes the proof as explained above.
\end{proof}

Now we are ready to prove the main result of the paper.

\begin{proof}[Proof of Theorem \ref{thm:Main}]
Let $\cF$, $G$, and $\cC$ be as in Theorem \ref{thm:Main}. 
The statement is trivial for $M=0$ so assume $M\neq 0$.
By Proposition \ref{pro:MainAtomic}, the isomorphism in Theorem \ref{thm:Main} holds when $M$ is an atomic functor. We prove Theorem \ref{thm:Main} for an arbitrary $\bbZ _{(p)} \cO^c (\cF)$-module $M$ by induction on the number of $\cF$-conjugacy classes $[P]$ of $\cF$-centric subgroups of $S$
such that $M(P)\neq 0$.  Let $P$ be an $\cF$-centric subgroup of $S$ with maximal order such that $M(P)\neq 0$. 

Let $M'$ be the submodule of $M$ such that $M'(Q)=M(Q)$ for $Q\not \sim_{\cF} P$
and $M'(Q)=0$ if $Q\sim _{\cF} P$. There is a short exact sequence of $\bbZ_{(p)} \cO^c (\cF)$-modules
\[
0 \to M' \to M \to M/M' \to 0 
\]
where $M/M'$ is an atomic functor concentrated at $P$.
This short exact sequence induces a commuting diagram of cochain complexes
\[
\xymatrix{0 \ar[r] & C^* _G (|\cC|; \cM' ) \ar[r]\ar[d]^{\Psi^*_{M'}} & C^* _G (|\cC|; \cM ) \ar[r]\ar[d]^{\Psi^*_M}& C^* _G (|\cC| ; \cM/\cM' ) \ar[r] \ar[d]^{\Psi^* _{M/M'}} & 0 \\
0 \ar[r] & C^* (\cO ^c (\cF); M' ) \ar[r] & C^*  (\cO ^c (\cF); M ) \ar[r] & C^* ( \cO ^c (\cF) ; M/M' ) \ar[r]  & 0 }
\]
where $\cM$ and $\cM'$ are associated equivariant coefficient systems to $M$ and $M'$. 

Recall that $C^* _G (|\cC|; \cM)=C^* (|\cC|; \cM)^G$, and for each $n\ge 0$, the sequence 
\[
\xymatrix{0 \ar[r] & C^n  (|\cC|; \cM' ) \ar[r]  & C^n  (|\cC|; \cM ) \ar[r] & C^n  (|\cC| ; \cM/\cM' ) \ar[r]  & 0}
\]
has a $G$-equivariant splitting, because on each simplex the coefficient sequence is either an identity map followed by zero, or zero followed by an identity map, and this distinction is preserved by the $G$-action.
Hence after taking $G$-fixed points, it is still exact. We conclude that the first horizontal sequence in the above diagram is exact. The exactness of the second sequence follows from the fact that $C^* (\cO^c (\cF); M)=\Hom _{\bbZ_{(p)} \cO ^c (\cF)} (P_* ; M)$ where $P_*$ is a projective resolution of the constant functor.
By the induction hypothesis, $\Psi^*_{M'}$ induces an isomorphism on cohomology. By Proposition \ref{pro:MainAtomic}, the cochain map 
$\Psi^* _{M/M'}$ induces an isomorphism on cohomology. Hence, by the five-lemma, $\Psi^*_M$ also induces an isomorphism on cohomology.
\end{proof}

Let $\cF$ be a saturated fusion system over $S$, and $\cC$ be a collection of subgroups of $S$ closed under $\cF$-conjugacy.  For every strict chain of subgroups $\sigma=(P_0 < \cdots < P_n)$ in $S$, we define $\Aut _{\cF} (\sigma)=\{ \varphi\in \Aut_{\cF} (P_n) \, |\, \varphi (P_i)=P_i \text{ for all } i\}$. We say two chains
$\sigma =(P_0< \cdots < P_n )$ and $\tau =(P_0'< \cdots < P_n')$
are $\cF$-conjugate if there is a morphism $\varphi :P_n \to P_n'$ in $\cF$ such that $\varphi (P_i)=P_i'$ for all $i$. We denote the $\cF$-conjugacy classes of chains in $\cC$ with length $n$ by $|\cC|_n/\cF$. 

\begin{lemma}\label{lem:IsomQuotients}
Let $\cF$ be a saturated fusion system over a finite $p$-group $S$, and $\cC$ be the collection of all $\cF$-centric subgroups in $S$.
Suppose that $G$ is a discrete group with Sylow $p$-subgroup $S$ realizing $\cF$, and $\cC'$ be the collection of all $p$-centric subgroups of $G$. Then for each $n\geq 0$, there is a bijection 
between $|\cC'|_n/G$ and $|\cC|_n/\cF$.
\end{lemma}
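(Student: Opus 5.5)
Define the map from chains of $\cF$-centric subgroups of $S$ to chains of $p$-centric subgroups of $G$ by inclusion, since $S\le G$. By \cite[Lemma 8.9]{Yalcin-Higher}, every $\cF$-centric subgroup of $S$ is $p$-centric in $G$, so a strict chain $\sigma=(P_0<\cdots<P_n)$ in $\cC$ is a simplex of $|\cC'|_n$. We get a map $\Phi:|\cC|_n\to |\cC'|_n/G$, $\sigma\mapsto [\sigma]_G$. The plan is to show that $\Phi$ is surjective and that its fibres are exactly the $\cF$-conjugacy classes.

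For surjectivity, take a chain $\tau=(H_0<\cdots<H_n)$ of $p$-centric subgroups of $G$. Since $H_n$ is a finite $p$-subgroup and $S$ is a Sylow $p$-subgroup of $G$, there is $g\in G$ with ${}^gH_n\le S$, and then ${}^g\tau$ is a chain of subgroups of $S$. Each ${}^gH_i$ is again $p$-centric in $G$, because being $p$-centric is invariant under $G$-conjugation. By \cite[Lemma 8.9]{Yalcin-Higher}, each ${}^gH_i$ is $G$-conjugate to an $\cF$-centric subgroup. Being $\cF$-centric is invariant under $\cF$-conjugacy, and $G$-conjugate subgroups of $S$ are $\cF$-conjugate because $\cF=\cF_S(G)$. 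Hence ${}^gH_i$ is itself $\cF$-centric, and ${}^g\tau\in|\cC|_n$ with $\Phi({}^g\tau)=[\tau]_G$.

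For the fibres, suppose $\sigma=(P_0<\cdots<P_n)$ and $\sigma'=(P'_0<\cdots<P'_n)$ are chains in $\cC$ with ${}^g\sigma=\sigma'$ for some $g\in G$. Then $c_g:P_n\to P'_n$ is a morphism of $\cF_S(G)=\cF$ sending $P_i$ to $P'_i$, so $\sigma\sim_\cF\sigma'$. Conversely, if $\varphi:P_n\to P'_n$ in $\cF$ satisfies $\varphi(P_i)=P'_i$ for all $i$, then $\varphi=c_g$ for some $g\in G$ since $\cF\cong\cF_S(G)$, and so ${}^g\sigma=\sigma'$. Thus $\Phi$ induces a bijection $|\cC|_n/\cF\to|\cC'|_n/G$.

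The only point needing care is the identification of $\cF$ with $\cF_S(G)$, including the claim that every $\cF$-morphism is induced by conjugation by an element of $G$. This holds because $G$ realizes $\cF$ with Sylow $p$-subgroup $S$ and the isomorphism $\cF\cong\cF_S(G)$ is the identity on objects. I expect the main obstacle to be the centricity transfer in the surjectivity step, which is handled by \cite[Lemma 8.9]{Yalcin-Higher} together with the invariance of $\cF$-centricity under $\cF$-conjugation. Strictness of chains is preserved by conjugation, so no further issue arises.
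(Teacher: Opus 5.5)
Your proof is correct and takes the same approach as the paper. Both use \cite[Lemma 8.9]{Yalcin-Higher} to show that every chain in $\cC'$ is $G$-conjugate to a chain of $\cF$-centric subgroups of $S$, and then use $\cF=\cF_S(G)$ to identify $G$-conjugacy of such chains with $\cF$-conjugacy. You also spell out the steps the paper leaves implicit: conjugating the top term $H_n$ into $S$, and using that $\cF$-centricity is invariant under $\cF$-conjugation.
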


\begin{proof}
By \cite[Lemma 8.9]{Yalcin-Higher}, a $p$-subgroup $P \le G$ is conjugate to an $\cF$-centric subgroup
in $S$ if and only if it is a $p$-centric subgroup of $G$.
This implies that every chain of subgroups in $\cC'$ is $G$-conjugate to a chain in $\cC$. Since $\cF$ is realized by $G$, two chains of subgroups in $\cC$ are $G$-conjugate if and only if they are $\cF$-conjugate.
\end{proof}

\begin{proposition}\label{pro:Main}
Let $G$ be a discrete group realizing the saturated fusion system $\cF$, and $M$ be any $\bbZ _{(p)} \cO ^c (\cF)$-module.
Suppose that $\cC$ is the collection of all $\cF$-centric subgroups in $S$, and $\cC'$ be the collection of $p$-centric subgroups in $G$.
Then for each $i\geq 0$, the higher limit $\lim^i _{\cO ^c (\cF)} M$  is isomorphic to the $i$-th cohomology of the sequence 
\begin{align*} 
\quad 0 \to \bigoplus _{[P_0] \in |\cC|_0/\cF} M (P_0) ^{\Aut _\cF(P_0)} \to \bigoplus _{[P_0< P_1] \in |\cC| _1/\cF} M (P_0) ^{\Aut_\cF(P_0< P_1)} \to \cdots \to 0
\end{align*}
where homomorphisms in the sequence are induced by the coboundary maps in the cochain complex $C^* _G (|\cC'|; \cM)$.
\end{proposition}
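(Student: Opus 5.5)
By Theorem \ref{thm:Main}, $\lim^*_{\cO^c(\cF)} M \cong H^*_G(|\cC'|;\cM)$. So it suffices to identify the normalized cochain complex $C^*_G(|\cC'|;\cM)$ with the displayed sequence.

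\textbf{Step 1: decompose the normalized cochains by orbits.} In degree $n$, the normalized cochain group is
\[
\Bigl(\prod_{\sigma\in|\cC'|_n}\cM(\sigma)\Bigr)^G ,
\]
where $\sigma$ runs over strict chains $(H_0<\cdots<H_n)$ of $p$-centric subgroups of $G$ and $\cM(\sigma)=M'(H_0)$. The product splits over $G$-orbits of simplices. For a single orbit $G\sigma$, a $G$-invariant element is determined by its value at $\sigma$. That value must be fixed by the stabilizer $N_G(\sigma)=\bigcap_i N_G(H_i)$, acting through $\cM(g,\id_\sigma)$. Hence the degree-$n$ group is
\[
\prod_{[\sigma]\in|\cC'|_n/G} M'(H_0)^{N_G(\sigma)} .
\]

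\textbf{Step 2: make the index set finite and pass to $S$.} By Lemma \ref{lem:IsomQuotients}, $|\cC'|_n/G$ is in bijection with $|\cC|_n/\cF$. Since $\cC$ is a finite set of subgroups of $S$, there are finitely many classes, so the product is a direct sum. I would choose representatives $\sigma=(P_0<\cdots<P_n)$ with all $P_i\le S$ and $\cF$-centric. For such a representative, $M'(P_0)=M(P_0)$.

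\textbf{Step 3: identify the stabilizer action with $\Aut_\cF(\sigma)$.} Each $g\in N_G(\sigma)$ acts on $M(P_0)$ by $M$ applied to the class of $c_g|_{P_0}$ in $\cO^c(\cF)$. This uses the definition of $\xi$ and the quotient functor $\cO_{\cC'}(G)\to\overline\cF_{\cC'}(G)\simeq\cO^c(\cF)$. The map $g\mapsto c_g|_{P_n}$ sends $N_G(\sigma)$ onto $\Aut_\cF(\sigma)$. Surjectivity holds because $\cF=\cF_S(G)$: every $\varphi\in\Aut_\cF(P_n)$ preserving each $P_i$ equals $c_g$ for some $g\in N_G(P_n)$, and this $g$ then normalizes each $P_i$. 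Restricting to $P_0$, the action of $g$ factors through $\varphi|_{P_0}\in\Aut_\cF(P_0)$, which acts on $M(P_0)$ via its class in $\Out_\cF(P_0)$. Therefore $M(P_0)^{N_G(\sigma)}=M(P_0)^{\Aut_\cF(\sigma)}$, where $\Aut_\cF(\sigma)$ acts by restriction to $P_0$. This matches the notation $M(P_0)^{\Aut_\cF(P_0<\cdots<P_n)}$ in the statement.

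\textbf{Step 4: conclude.} Transporting the coboundary of $C^*_G(|\cC'|;\cM)$ through these identifications gives the maps of the displayed sequence, exactly as the statement defines them. Because the normalized subcomplex computes $H^*_G(|\cC'|;\cM)$, its $i$-th cohomology is $\lim^i_{\cO^c(\cF)}M$.

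The main obstacle is bookkeeping in Step 3. I need to check that the identification is independent of the chosen representative $\sigma$ within its orbit; conjugating by $g$ gives an isomorphism of fixed points compatible with the orbit decomposition. I also need the elements of $C_G(P_0)\cap N_G(\sigma)$ to act trivially, which holds because $M$ factors through the fusion orbit category. Everything else is formal once Theorem \ref{thm:Main} is in hand.
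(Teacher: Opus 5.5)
Your proposal is correct and follows the same route as the paper. You apply Theorem \ref{thm:Main}, split the $G$-invariant cochains over $G$-orbits of chains as $\bigoplus M(P_0)^{N_G(\sigma)}$, re-index by $|\cC|_n/\cF$ via Lemma \ref{lem:IsomQuotients}, and identify $M(P_0)^{N_G(\sigma)}$ with $M(P_0)^{\Aut_\cF(\sigma)}$. The only difference is that you spell out why $N_G(\sigma)\to\Aut_\cF(\sigma)$ is surjective, which the paper leaves implicit when it writes $N_G(\sigma)/C_G(P_n)\cong\Aut_\cF(\sigma)$ after noting that $C_G(P_n)\le C_G(P_0)$ acts trivially on $M(P_0)$.
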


\begin{proof}
Let $G$ be a discrete group with Sylow $p$-subgroup $S$ realizing $\cF$, and $\cC'$ be the collection of all $p$-centric subgroups of $G$. Let $\cM$ be the coefficient system on $|\cC '|$ obtained by precomposing $M$ with $\overline \xi: \Delta (|\cC'| )_G \to \cO ^c (\cF)^{op}$. For each $n\geq 0$, we have $$C_G^n (|\cC'|; \cM ) = \Bigl ( \prod _{(P_0< \cdots < P_n ) \in |\cC'| _n } M (P_0 ) \Bigr ) ^G \cong \bigoplus _{[P_0<\cdots < P_n ] \in |\cC'|_n /G } M(P_0) ^{N_G (P_0<\cdots < P_n )}.$$
%where $N_G (P_0 < \cdots < P_n)=\bigcap_{i=0}^n N_G(P_i)$. 
%Hence the cochain complex $C^n_G (|\cC'| ; \cM)$ is a sequence of the form
%\begin{align*} 
%\quad 0 \to \bigoplus _{[P] \in |\cC'|_0/G} M (P) ^{N_G (P)} \to \bigoplus _{[P_0< P_1] \in |\cC'| _1/G} M (P_0) ^{N_G(P_0< P_1)} \to \cdots \to 0.
%\end{align*}
By Lemma \ref{lem:IsomQuotients}, for each $n\geq 0$,  the set of $G$-orbits $|\cC'|_n/G$ is isomorphic to  $|\cC|_n/\cF$.  Since $M$ is a $\bbZ _{(p)} \cO ^c (\cF)$-module, 
$C_G(P_0)$ acts trivially on $M(P_0)$. Since $C_G (P_n) \le C_G (P_0)$, the centralizer $C_G(\sigma):=\cap _{i=0} ^n C_G (P_i)=C_G(P_n)$ also acts trivially on $M(P_0)$. Hence for each chain $\sigma$ in $|\cC|_n$,
$$M(P_0) ^{N_G (\sigma)}= M(P_0) ^{N_G (\sigma )/ C_G (P_n)}=
M(P_0) ^{\Aut _{\cF} (\sigma )}.$$
So the cochain complex $C^*_G (|\cC'|; \cM)$ is isomorphic to the sequence given in the proposition. By Theorem \ref{thm:Main}, the higher limit $\lim ^i _{\cO ^c (\cF)} M$ is isomorphic to the $i$-th cohomology of this sequence.
\end{proof}

%The proof of Corollary \ref{cor:Main} follows from Proposition \ref{pro:Main}.

\begin{proof}[Proof of Corollary \ref{cor:Main}]
Consider the sequence in Proposition \ref{pro:Main} with $M=H^n (-)$, the group cohomology functor.  Note that $\lim_{\cO ^c (\cF)} H^n (-) \cong H^n (\cF)$, so if we add $H^n (\cF)$ to the left-hand side of this sequence, the sequence will be exact for all $n\geq 0$ if and only if  the subgroup decomposition over $\cF$-centric subgroups is sharp (see Definition \ref{def:BFSharp}). 
\end{proof}
 
%-------------------------

\section{A sequence induced by the normalizer decomposition}\label{sect:NormalizerDec}

In this section we discuss the normalizer decomposition for $p$-local finite groups, introduced by Libman \cite{Libman-NormDec}, and we obtain a fusion system version of the sequence \eqref{eqn:GroupExactSeq-Normalizer} stated in the introduction.  
 
Let $\cF$ be a saturated fusion system over $S$, and let $\cF^c$ denote the full subcategory whose objects are $\cF$-centric subgroups in $S$.
Recall that two strict chains of subgroups $\sigma = (P_0< \cdots < P_n)$ and $\tau = (Q_0 < \cdots < Q_n)$ are  $\cF$-conjugate if there is an $\cF$-isomorphism $\varphi : P_n \to Q_n$ such that $\varphi (P_i)=Q_i$ for all $i$. 
The \emph{subdivision category of $\cF^c$} is the category  $\overline{s}d(\cF^c)$ whose objects are the $\cF$-conjugacy classes $[\sigma]$ of (strict) chains $\sigma :=(P_0<P_1< \cdots <P_n)$ of $\cF$-centric subgroups of $S$,
and whose morphisms are defined so that there is a unique morphism $[\sigma] \to [\tau]$ if $\tau$ is $\cF$-conjugate to a 
subchain of $\sigma$.

Associated to each fusion system, there is a unique centric linking system $\cL$ up to isomorphism whose objects are the $\cF$-centric subgroups of $S$ and whose morphisms are defined by a list of properties (see \cite[Definition 10.9]{Yalcin-Higher}). The triple $(S, \cF, \cL)$ is called a $p$-local finite group. 
%Normalizer decompositions for linking systems were introduced by Libman in \cite{Libman-NormDec}.
There is a quotient functor $\pi : \cL \to \cF^c$ as part of the definition of a centric linking system.

For each inclusion map $\mathrm{inc} _P ^Q : P \hookrightarrow Q$ in $\cF^c$, there is a morphism $\iota _P ^Q$ in $\cL$ such that $\pi (\iota_P^Q)=\mathrm{inc}_P ^Q$. These morphisms are defined such that $\iota _P ^P=\id_P$ for all $P\in \cF^c$ and that $\iota _Q ^R \circ \iota _P ^Q =\iota _P ^R $ whenever $P<Q<R$ in $\cF^c$.
For every chain $\sigma =(P_0< \cdots <P_n)$ of subgroups in $\cF^c$, the automorphism group $\Aut_{\cL}(\sigma)$ is defined to be the subgroup
$$\Aut _{\cL} (\sigma ) =\{ (\varphi_i) \in \prod _i \Aut _{\cL} (P_i) \, |\, \varphi_j \circ \iota ^{P_j}  _{P_i} = \iota _{P_i} ^{P_j} \circ \varphi _i \text{  for all } i\le j \} \subseteq \prod _i \Aut _{\cL} (P_i).$$

If $\tau$ is a face of $\sigma$, then there is an associated group homomorphism $\Res ^{\sigma} _{\tau} : \Aut _{\cL } (\sigma) \to \Aut _{\cL } (\tau)$ defined by restriction of automorphisms. This induces a map $B\Res ^{\sigma} _{\tau} : B\Aut_{\cL} (\sigma) \to B\Aut _{\cL} (\tau)$ on the classifying spaces. In \cite[Theorem A]{Libman-NormDec}, Libman proves that there is a natural weak homotopy equivalence 
\[
\hocolim _{\overline sd (\cF^c)} \delta_{\cF^c} \xrightarrow{\simeq} |\cL|
\]
where $\delta _{\cF^c}: \overline sd (\cF ^c) \to Top$ is a functor such that for every chain $\sigma$ of subgroups in $\cF^c$, there is a natural weak homotopy equivalence $\delta_{\cF^c} ([\sigma]) \simeq B\Aut _{\cL} (\sigma)$. The naturality here means that if $\tau$ is a face of $\sigma$, then there is a commuting square
\[
\xymatrix{B\Aut_{\cL}(\sigma) \ar[d]^{B\Res^{\sigma} _{\tau}} \ar[r]^{\simeq} & \delta_{\cF^c} ([\sigma]) \ar[d] \\
B\Aut_{\cL} (\tau) \ar[r]^{\simeq} & \delta_{\cF^c} ([\tau])}
\]

Composing the functor $\delta _{\cF^c}$ with the cohomology functor $H^n (-)$, we obtain a contravariant functor $$\cH ^n _{A_{\cL}} : \overline{s}d(\cF^c) ^{op} \to \bbF_p\text{-Mod}$$
such that for every simplex $\sigma$ there is an isomorphism 
$\cH ^n _{A_{\cL}}  ([\sigma]) \cong H^n (\Aut _{\cL } (\sigma))$. In \cite{Yalcin-Higher}, the following theorem is proved.

\begin{theorem}[{\cite[Theorem 10.15]{Yalcin-Higher}}]\label{thm:Yalcin} Let $\cF$ be a saturated fusion system over $S$. For every  $n\geq 0$,   there is an isomorphism
$$\underset{\cO^c (\cF) } {\lim {}^*} H^n (-)  \cong \underset{\overline{s}d(\cF^c)}{\lim {}^*} \cH ^n _{A_{\cL}}.$$
\end{theorem}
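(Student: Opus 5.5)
The plan is to compare both sides through a zigzag of functors and a Grothendieck-type spectral sequence, as Dwyer and Grodal do for finite groups, with the linking system $\cL$ playing the role of the group. I would start by building an auxiliary category $\cE$ that maps to both indexing categories. Let $\cE$ be the category whose objects are chains $\sigma=(P_0<\cdots<P_n)$ of $\cF$-centric subgroups, and whose morphisms are $\cL$-morphisms carrying one chain onto a subchain of another. Its skeleton is related to $\overline sd(\cF^c)$ by the projection $\pi_1:[\sigma]\mapsto[\sigma]$, and to $\cO^c(\cF)^{op}$ by $\pi_0:\sigma\mapsto P_0$. Note that $\pi_0$ is the fusion analogue of $\xi$ in Definition \ref{def:xi}.

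The first step is to show that the pullback $\pi_0^*$ induces an isomorphism
\[
\lim{}^*_{\cO^c(\cF)} M\cong \lim{}^*_{\cE} \pi_0^*M
\]
for every $\bbZ_{(p)}\cO^c(\cF)$-module $M$. After passing to a realizing group $G$, this is essentially the content of Theorem \ref{thm:Main} together with Proposition \ref{pro:Main}. Concretely, $\lim^*_{\cE}\pi_0^*M$ is computed by the complex of chains with coefficients $M(P_0)^{\Aut_\cF(\sigma)}$, and Proposition \ref{pro:Main} identifies its cohomology with $\lim^*_{\cO^c(\cF)}M$.

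The second step uses the right Kan extension along $\pi_1$. This gives a Grothendieck spectral sequence
\[
E_2^{i,j}=\lim{}^i_{\overline sd(\cF^c)} R^j(\pi_1)_*\pi_0^*H^n(-)\Rightarrow \lim{}^{i+j}_{\cE}\pi_0^*H^n(-).
\]
The value of $R^j(\pi_1)_*$ at $[\sigma]$ is the $j$-th cohomology of the fibre category over $[\sigma]$, which is equivalent to the chains refining $\sigma$ with the action of $\Aut_\cL(\sigma)$. I would identify this fibre cohomology with the higher limits of $H^n(-)$ over the centric orbit category of the normalizer system $N_\cF(\sigma)$, with $\sigma$ taken fully $\cF$-normalized.

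The main obstacle is showing that these fibre terms vanish for $j>0$ and equal $H^n(\Aut_\cL(\sigma))\cong\cH^n_{A_\cL}([\sigma])$ for $j=0$. My first attempt would use that $N_\cF(\sigma)$ is saturated and constrained, since $P_0$ is normal and centric in it. A constrained saturated fusion system is realized by a finite group, namely its model $\Aut_\cL(\sigma)$, by the model theorem. For fusion systems realized by finite groups, the Diaz--Park theorem gives sharpness of the centric subgroup decomposition. That yields both the vanishing of $R^j(\pi_1)_*$ for $j>0$ and the identification $\lim^0=H^n(N_\cF(\sigma))\cong H^n(\Aut_\cL(\sigma))$.

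Once this is in place, the spectral sequence collapses onto the row $j=0$. Combining the collapsed sequence with the first step gives
\[
\lim{}^*_{\cO^c(\cF)}H^n(-)\cong\lim{}^*_{\overline sd(\cF^c)}\cH^n_{A_\cL}.
\]
The last thing to check is that the identifications are compatible with the restriction maps $\Res^\sigma_\tau$, which is exactly the naturality square of Libman's normalizer decomposition.
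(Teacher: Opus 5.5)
Your auxiliary category $\cE$ cannot do what Steps 1 and 2 require, and that is a genuine gap. Its objects have non-trivial automorphism groups, namely $\Aut_{\cE}(\sigma)=\Aut_{\cL}(\sigma)$. So $\lim^*_{\cE}$ is a Borel-type invariant that sees the group cohomology of these groups. The complex in Proposition \ref{pro:Main}, with terms $M(P_0)^{\Aut_\cF(\sigma)}$, computes the paper's invariant-cochain cohomology $H^*_G(|\cC'|;\cM)$. That is not $\lim^*$ over a Grothendieck-type category such as $\Delta(|\cC'|)_G$ or your $\cE$.

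The smallest example already breaks Step 1. Take $S=C_p$ and $\cF=\cF_S(S)$. Then $\cF^c=\{S\}$, so $\cO^c(\cF)$ is a single object with trivial automorphism group, and $\lim^*_{\cO^c(\cF)}M=M(S)$ is concentrated in degree $0$. But $\cE$ is the one-object category with automorphism group $\Aut_\cL(S)=C_p$. Hence $\lim^*_{\cE}\pi_0^*M\cong H^*(C_p;M(S))$, which is non-zero in every degree when $M(S)=\bbF_p$. Using $\cF$-morphisms instead does not help: for $\cF=\cF_S(S)$ with $S$ non-abelian, the same computation with $M$ atomic at $S$ gives $H^*(\Inn(S);M(S))$. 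There is no transfer argument available, since the coefficients are $p$-local.

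The fibre computation in Step 2 fails for the same reason. A chain $\tau$ contains at most one subchain $\cF$-conjugate to $\sigma$, because its terms are pinned down by their orders. So the comma category $[\sigma]\downarrow\pi_1$ is equivalent to the category of chains $\tau\supseteq\sigma$. For each such $\tau$, $\Mor_\cE(\sigma,\tau)$ is a free transitive $\Aut_\cL(\sigma)$-set. Hence $B\Aut_\cL(\sigma)\to[\sigma]\downarrow\pi_1$ is homotopy initial, and the value of $R^j(\pi_1)_*\pi_0^*H^n(-)$ at $[\sigma]$ is $H^j(\Aut_\cL(\sigma);H^n(P_0))$. This does not vanish for $j>0$; in the $C_p$ example it is $H^j(C_p;H^n(C_p))$. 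For $j=0$ it is $H^n(P_0)^{\Aut_\cL(\sigma)}$, which in general differs from $H^n(\Aut_\cL(\sigma))$. The refinements of $\sigma$ bear no relation to $\cO^c(N_\cF(\sigma))$, so Diaz--Park never gets applied and the spectral sequence does not collapse.

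Your local inputs are the right ones: $N_\cF(\sigma)$ is constrained with model $\Aut_\cL(\sigma)$, its centric decomposition is sharp, and $H^*(N_\cF(\sigma))\cong H^*(\Aut_\cL(\sigma))$. What is missing is a mechanism that compares the coefficient systems $\sigma\mapsto H^n(P_0)$ and $\sigma\mapsto H^n(\Aut_\cL(\sigma))$ on invariant cochains of $|\cC|$. Theorem \ref{thm:Main} handles the first and Proposition \ref{pro:Slominska} the second. Relating them is the real content; one possibility is a double complex that applies Theorem \ref{thm:Main} to each $N_\cF(\sigma)$ realized by $N_G(\sigma)$, plus an acyclicity argument for the other filtration. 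The paper does not reprove this step; it imports the statement from the author's earlier work.
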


The normalizer decomposition for a $p$-local finite group is said to be \emph{sharp} if the higher limits $\lim ^i _{\overline{s}d(\cF^c)} \cH ^n _{A_{\cL}}$ vanish for all $i>0$, $n\geq 0$. Theorem \ref{thm:Yalcin} says that the normalizer decomposition for a $p$-local finite group $(S, \cF, \cL)$ is sharp if and only if the subgroup decomposition for $\cF$ is sharp over $\cF$-centric subgroups.

We can reinterpret the higher limits $\lim _{\overline{s}d(\cF^c)} ^* \cH ^n _{A_{\cL}}$ using another category. Let $G$ be a discrete group realizing the fusion system $\cF$, and let $\cC$ be a collection of subgroups in $G$. 
We denote by $sd _{\cC} (G)$ the poset category of chains $\sigma =(P_0<\cdots < P_n)$ of subgroups in $\cC$, where there
is a morphism $\tau \to \sigma$ if $\tau$ is a subchain of $\sigma$.
The opposite category of $sd_{\cC} (G)$ is denoted by $Sd _{\cC} (G)$.

There is a $G$-action on $Sd_{\cC} (G)$ defined by $G$-conjugation. 
The category of \emph{$G$-orbits of chains in $\cC$} is the category $Sd_{\cC} (G)/G$ 
whose objects are the $G$-orbits 
$[\sigma]$ of the chains $\sigma$ in $Sd_{\cC} (G)$ and such that there is a unique morphism 
$[\sigma] \to [\tau]$ in $Sd_{\cC} (G)/G$ if there is an element $g\in G$ such that $\tau $ 
is a subchain of $g \sigma$. 

\begin{lemma}\label{lem:IsomorphicPosets}
Let $G$ be a discrete group realizing the fusion system $\cF$, and $\cC$ denote the collection of all $p$-centric subgroups in $G$. Then the poset category $Sd_{\cC} (G)/G$ is isomorphic to $\overline{s}d(\cF ^c )$. 
\end{lemma}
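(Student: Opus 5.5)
The plan is to write down an explicit map of posets on objects and check that it is well defined, bijective, and order-preserving in both directions. The main input is Lemma \ref{lem:IsomQuotients} together with the fact from \cite[Lemma 8.9]{Yalcin-Higher} that a $p$-subgroup of $G$ is $p$-centric if and only if it is $G$-conjugate to an $\cF$-centric subgroup of $S$.

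First I construct the map on objects. Let $[\sigma]_G$ be a $G$-orbit of a chain $\sigma=(P_0<\cdots<P_n)$ of $p$-centric subgroups of $G$. Since $P_n$ is a finite $p$-subgroup, there is $g\in G$ with ${}^gP_n\le S$. Then ${}^g\sigma$ is a chain of subgroups of $S$, and each ${}^gP_i$ is $p$-centric. A $p$-centric subgroup of $G$ lying in $S$ is $\cF$-centric: it is $G$-conjugate to an $\cF$-centric subgroup, and that $G$-conjugation is a morphism of $\cF=\cF_S(G)$, so the subgroup is $\cF$-conjugate to an $\cF$-centric subgroup and is therefore itself $\cF$-centric. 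I send $[\sigma]_G$ to $[{}^g\sigma]_\cF$.

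Next I check that this is well defined and bijective. If ${}^h\sigma$ is another conjugate inside $S$, then $c_{hg^{-1}}$ restricted to ${}^gP_n$ is a morphism of $\cF_S(G)=\cF$ carrying ${}^g\sigma$ to ${}^h\sigma$ termwise, so the two chains are $\cF$-conjugate. Conversely, every $\cF$-conjugacy of chains in $S$ is induced by conjugation by an element of $G$. This is exactly the bijection $|\cC'|_n/G\cong|\cC|_n/\cF$ of Lemma \ref{lem:IsomQuotients}, taken over all $n$. Its inverse sends $[\tau]_\cF$, for $\tau$ a chain of $\cF$-centric subgroups, to $[\tau]_G$; this uses that $\cF$-centric subgroups are $p$-centric in $G$.

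Finally I check that morphisms correspond. In $Sd_{\cC}(G)/G$ there is a morphism $[\sigma]\to[\tau]$ if and only if $\tau$ is a subchain of some $x\sigma$. In $\overline sd(\cF^c)$ there is a morphism $[\sigma]\to[\tau]$ if and only if $\tau$ is $\cF$-conjugate to a subchain of $\sigma$. I take representatives $\sigma,\tau$ in $S$.

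\begin{itemize}
\item Suppose $\tau\subseteq {}^x\sigma$ for some $x\in G$. Then the top of $\tau$ lies in $S$ and in ${}^xP_n$. The subchain $\tau'$ of $\sigma$ with ${}^x\tau'=\tau$ lies in $S$, so $c_x$ restricted to the top of $\tau'$ is an $\cF$-morphism carrying $\tau'$ to $\tau$. Hence $\tau$ is $\cF$-conjugate to a subchain of $\sigma$.
\item Suppose $\varphi$ is an $\cF$-isomorphism carrying a subchain $\tau'$ of $\sigma$ to $\tau$. Write $\varphi=c_x$ for some $x\in G$. Then ${}^x\tau'=\tau$, so $\tau$ is a subchain of ${}^x\sigma$.
\end{itemize}

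Both categories are posets, with at most one morphism between any two objects. So this equivalence of the morphism relations, together with the bijection on objects, gives the isomorphism of categories.

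The only delicate point is that the realization $\cF\cong\cF_S(G)$ identifies $\cF$-morphisms between subgroups of $S$ with $G$-conjugations. The first bullet relies on this when it restricts $c_x$ to a subgroup of $S$ whose image also lies in $S$.
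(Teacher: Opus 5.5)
Your proof is correct and takes the same route as the paper. The paper's proof just cites \cite[Lemma 8.9]{Yalcin-Higher} (a $p$-subgroup of $G$ is $p$-centric if and only if it is $G$-conjugate to an $\cF$-centric subgroup) together with the identification $\cF=\cF_S(G)$; you have written out the resulting bijection on orbits of chains (as in Lemma \ref{lem:IsomQuotients}) and checked explicitly that the subchain relations match.
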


\begin{proof} This follows from \cite[Lemma 8.9]{Yalcin-Higher} which states that every $p$-subgroup of $G$ is $p$-centric if and only  if it is conjugate to an $\cF$-centric subgroup in $S$. 
\end{proof}

As a consequence of Lemma \ref{lem:IsomorphicPosets}, the isomorphism in Theorem  \ref{thm:Yalcin} can be stated as follows:
\begin{align}\label{eqn:FirstStep}
\underset{\cO^c (\cF) } {\lim {}^*} H^n (-)  \cong \underset{Sd_{\cC} (G)/G}{\lim {}^*} \cH ^n _{A_{\cL}}.
\end{align}
The next step is to find an interpretation of the higher limits on the right as the equivariant cohomology of the complex $|\cC|$ with coefficients in an equivariant coefficient system.

Consider the functor $$\eta :  \Delta (|\cC| )_G \to  sd_{\cC} (G)/G = (Sd_{\cC} (G)/G )^{op}$$
which sends each simplex $\sigma=(P_0< \cdots < P_n)$ to its $G$-orbit $[\sigma]$ in $sd_{\cC} (G)/G$, and each morphism $(g, f) : \tau \to \sigma$ in $\Delta (|\cC| )_G$ to the unique morphism  $[\tau] \to [\sigma]$ in $sd_{\cC} (G)/G$. For a non-strict chain $P_0\le \cdots\le P_n$, the functor $\eta$ first deletes the repeated subgroups, and then sends the resulting strict chain to its orbit. For higher limits over the subdivision category, we have the following result which is due to Slominska \cite{Slominska-SpectralSeq} (see also \cite[Proposition 7.8]{Yalcin-Higher} and \cite[Proposition 7.1]{Grodal-Higher}).

\begin{proposition}\label{pro:Slominska} Let $N : (Sd_{\cC}(G)/G)^{op} \to R$-Mod be an arbitrary functor. Then
$$\underset{Sd_{\cC} (G) /G }{\lim {}^*}  N \cong 
H^* _G (|\cC| ; \cN)$$
where $\cN$  is the equivariant coefficient system defined by precomposing $N$ with the functor $\eta : \Delta (|\cC|) _G \to  (Sd_{\cC} (G)/G)^{op}$.  
\end{proposition}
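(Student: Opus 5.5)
The plan is to compare the two sides by exhibiting a common cochain model. First, compute $\lim^*_{Sd_{\cC}(G)/G} N$ with the standard (Bousfield--Kan) cochain complex of the poset $P:=Sd_{\cC}(G)/G$. Since $P$ is a poset, the cochains in degree $n$ are $\prod N(x_0)$, where the product runs over chains $x_0\to\cdots\to x_n$ in $P$. Here $x_0$ is the largest orbit in the contravariant direction, i.e. the longest chain of subgroups. Second, write $C^*_G(|\cC|;\cN)$ explicitly:
\[
C^n_G(|\cC|;\cN)=\Bigl(\prod_{\sigma\in|\cC|_n}N([\bar\sigma])\Bigr)^G\cong\prod_{[\sigma]\in|\cC|_n/G}N([\bar\sigma])^{G_\sigma},
\]
where $\bar\sigma$ denotes the strict chain obtained from $\sigma$ by deleting repetitions. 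Since $\eta$ sends every morphism $(g,\id)$ to an identity in the poset, $G_\sigma$ acts trivially on $N([\bar\sigma])$, so the invariants are all of $N([\bar\sigma])$. These two complexes do not match term by term, because simplices of $|\cC|$ are chains of subgroups, while simplices of the nerve of $P$ are chains of chains. The comparison therefore has to go through a homotopy-theoretic or derived argument rather than a literal identification.

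My approach is to realize both sides as Ext over $P^{op}$-modules (or over the category $\Delta(|\cC|)_G$) and use a cofinality / Kan-extension argument. Concretely, the functor $\eta:\Delta(|\cC|)_G\to P^{op}$ induces a map
\[
\lim^*_{P}N\to\lim^*_{\Delta(|\cC|)_G}(N\circ\eta).
\]
I would then use two facts.

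(a) For any $G$-simplicial set $X$ and coefficient system $\cM$, there is an isomorphism $H^*_G(X;\cM)\cong\lim^*_{\Delta(X)_G}\cM$. This holds because $C^*_G(X;\cM)$ is exactly $\Hom$ out of a resolution of the constant functor built from the free modules $R\Mor_{\Delta(X)_G}(\sigma,-)$, indexed by $G$-orbits of simplices; these are projective, and the resolution is exact by the simplicial contractibility of the under-categories.

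(b) The map from (a), composed with the one induced by $\eta$, is an isomorphism as soon as $\eta$ is ``homotopy left cofinal'': for each object $[\tau]$ of $P^{op}$, the comma category $\eta/[\tau]$ (or $[\tau]/\eta$, depending on variance) must have contractible nerve. This reduces the claim to a Quillen Theorem A–type check.

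For the contractibility in (b), fix an orbit $[\tau]$. The comma category is equivalent to the Grothendieck construction over $G$ of the category of simplices of the subcomplex of $|\cC|$ consisting of chains whose underlying strict chain is $G$-conjugate to a supchain of $\tau$ (or to a subchain of $\tau$, depending on variance). Modding out by $G$ replaces this with the stabilizer-reduced version, which is equivalent to the simplex category of $|\tau|$ itself, or of the barycentric-type complex of subchains of $\tau$. That complex is a simplex (or a cone on $\tau$), hence contractible.

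I expect the main obstacle to be making this step precise. One issue is that the $G$-action can fold the subchains of $\tau$ via elements of $N_G(\tau)$ that permute nothing: such elements must fix each $P_i$, since conjugation preserves orders. Because of this, the orbit identification should be harmless, but I would check it carefully. The other issue is handling the degenerate (non-strict) chains, which $\eta$ collapses. I would deal with these by passing to normalized cochains on both sides, as the paper already notes is allowed, so that only strict chains appear.

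Finally, I would assemble the pieces. By (a), $H^*_G(|\cC|;\cN)\cong\lim^*_{\Delta(|\cC|)_G}\cN$. By the cofinality statement in (b), this is $\lim^*_{P}N$ via $\eta^*$. If the Theorem A route becomes messy, my fallback is a direct double-complex argument. I would filter both cochain complexes by the orbit $[x_0]$, i.e. by the largest strict chain. Each associated graded piece then reduces to the cohomology of a simplex relative to its boundary, which is concentrated in a single degree, and the resulting $E_1$-pages agree.
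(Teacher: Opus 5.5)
Your main route does not work, because both (a) and (b) are false. The Grothendieck construction $\Delta(X)_G$ keeps the stabilizers as automorphism groups: the morphisms $(g,\id_\sigma)$ with $g\in G_\sigma$ are automorphisms of $\sigma$. By Thomason, its nerve is the homotopy orbit space $EG\times_G X$. So $\lim^*_{\Delta(X)_G}\cM$ is the $G$-hypercohomology of $C^*(X;\cM)$ and sees the group cohomology of the stabilizers. By contrast, $H^*_G(X;\cM)$ in the paper is the cohomology of the \emph{invariant} cochains.

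A counterexample inside the paper's own setting: take $G=S=\bbZ/p$, so $\cC=\{S\}$ and $|\cC|$ is a point fixed by $G$, and take $N=\bbF_p$.
\begin{itemize}
\item Both $\lim^*_{Sd_{\cC}(G)/G}N$ and $H^*_G(|\cC|;\cN)$ are $\bbF_p$, concentrated in degree $0$.
\item But $\Delta(|\cC|)_G=\Delta\times G$, with $G$ viewed as a one-object category, so $\lim^*_{\Delta(|\cC|)_G}\cN\cong H^*(\bbZ/p;\bbF_p)$, which is nonzero in every degree.
\item Your proposed resolution by the modules $R\Mor_{\Delta(X)_G}(\sigma,-)$ is not exact. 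At the $m$-simplex of the point it is $C_*(\Delta[m];R)\otimes RG$, whose $H_0$ is $RG$, not $R$.
\item Likewise $\eta\downarrow[\tau]$ is all of $\Delta(*)_G$, whose nerve is $\simeq BG$.
\end{itemize}
In general this comma category computes homotopy orbits, not the strict quotient you describe, so your ``stabilizer-reduced'' step in (b) is exactly where the argument breaks. The two errors cancel in the final statement, but the chain of isomorphisms you write down is invalid.

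The missing idea is that you never need the bar resolution of the poset, so the term-by-term mismatch you point out is no obstruction. With $\bC:=Sd_{\cC}(G)/G$, set
\[
P_n=\bigoplus_{[\sigma]\in|\cC|_n/G}R\Mor_{\bC}(-,[\sigma])
\]
over strict chains $\sigma$, with differential $\sum_i(-1)^i$ induced by the morphisms $[\sigma]\to[d_i\sigma]$. In the paper's setting the subgroups in $\cC$ are finite, so orders strictly increase along a strict chain. Hence no element of $G$ carries one subchain of a chain $\rho$ onto a different subchain. So the $n$-faces of $\rho$ map bijectively onto the orbits $[\sigma]$ with $[\rho]\to[\sigma]$, and $P_*([\rho])$ is the simplicial chain complex of the simplex spanned by $\rho$. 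Thus $P_*\to\underline R$ is a projective resolution. By Yoneda, $\Hom_{R\bC}(P_n,N)=\prod_{[\sigma]\in|\cC|_n/G}N([\sigma])$ with the simplicial coboundary. By your own observation that $G_\sigma$ acts trivially on $\cN(\sigma)$, this is precisely the normalized complex $C^*_G(|\cC|;\cN)$.

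Your fallback filtration argument can also be made to work. The graded piece at $x_0=[\sigma]$ is the relative cochain complex of $(\Delta^{\dim\sigma},\partial\Delta^{\dim\sigma})$, so $E_1$ sits in a single row and the spectral sequence collapses at $E_2$. But you must identify $d_1$ with the simplicial coboundary, not just match the $E_1$-terms. Identifying the part of the poset below $[\sigma]$ with the face poset of $\partial\sigma$ also uses the same non-folding fact.
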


Combining the isomorphism in \eqref{eqn:FirstStep} with the isomorphism in Proposition \ref{pro:Slominska}, we obtain 
\begin{equation}\label{eqn:isom2}
\underset{\cO^c (\cF) } {\lim {}^*} H^n (-) \cong 
H^* _G ( |\cC|; \cN)
\end{equation}
where $\cN$ is the equivariant coefficient system on $|\cC|$ such that  $$\cN (\sigma)\cong H^n (\Aut _{\cL} (\sigma))$$
for every fully $\cF$-normalized chain $\sigma$ in $|\cC|$. Using the definition of the cochain complex for the equivariant cohomology, we conclude the following: 

\begin{proposition}\label{pro:MainAut}
Let $G$ be a discrete group realizing the saturated fusion system $\cF$. Suppose that $\cC$ is the collection of $\cF$-centric subgroups of $S$,  and $\cC'$ is the collection of $p$-centric subgroups in $G$. Consider the sequence obtained by adding $H^n (\cF)$ to the beginning of the cochain complex $C^* _G ( |\cC'|; \cN)$ described above:
$$0 \to H^n (\cF) \to \bigoplus _{[P]\in |\cC|_0 /\cF} H^n (\Aut_{\cL}(P)) \to \bigoplus _{[P_0< P_1] \in |\cC| _1/\cF} H^n ( \Aut_{\cL} (P_0< P_1) )   \to \cdots \to 0. $$
These sequences are exact for all $n\geq 0$ if and only if the subgroup decomposition for $\cF$ is sharp over the collection of $\cF$-centric subgroups in $S$.
\end{proposition}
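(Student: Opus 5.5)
The plan is to combine the isomorphism \eqref{eqn:isom2} with an explicit description of the cochain complex $C^*_G(|\cC'|;\cN)$, mirroring the proof of Corollary \ref{cor:Main}.

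First, pass to normalized cochains, as in Section \ref{sect:EquivariantCoefSystem}. Since $\cN$ factors through $\eta$, which deletes repetitions, $\cN$ takes the same value on degenerate and nondegenerate chains, so the normalized subcomplex computes the same cohomology. In degree $k$ it is
\[
\Bigl(\prod_{\sigma\in|\cC'|_k}\cN(\sigma)\Bigr)^G\cong\bigoplus_{[\sigma]\in|\cC'|_k/G}\cN(\sigma)^{N_G(\sigma)}.
\]
Next, observe that $\cN$ is pulled back from $Sd_{\cC'}(G)/G$. Hence every morphism $(g,\id_\sigma)$ with $g\in N_G(\sigma)$ acts on $\cN(\sigma)$ through the identity of $[\sigma]$, and so it acts trivially. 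This gives $\cN(\sigma)^{N_G(\sigma)}=\cN(\sigma)\cong H^n(\Aut_\cL(\sigma))$, where we choose $\sigma$ to be a fully $\cF$-normalized representative.

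Then use Lemma \ref{lem:IsomQuotients} to replace $|\cC'|_k/G$ by $|\cC|_k/\cF$. The complex becomes
\[
\bigoplus_{[P]}H^n(\Aut_\cL(P))\to\bigoplus_{[P_0<P_1]}H^n(\Aut_\cL(P_0<P_1))\to\cdots,
\]
whose differentials are alternating sums of the restriction maps $\Res^\sigma_\tau$, transported along conjugations.

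By \eqref{eqn:isom2}, the $i$-th cohomology of this complex is $\lim^i_{\cO^c(\cF)}H^n(-)$. In degree $0$ this is $\lim_{\cO^c(\cF)}H^n(-)\cong H^n(\cF)$, the same identification used in the proof of Corollary \ref{cor:Main}. Therefore prepending $H^n(\cF)$, via the inclusion of the kernel of the first differential, gives a sequence that is automatically exact at $H^n(\cF)$ and at the first sum. Exactness at every later spot is then equivalent to $\lim^i_{\cO^c(\cF)}H^n(-)=0$ for all $i>0$. Requiring this for all $n$ is exactly Definition \ref{def:BFSharp}.

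The main point needing care is the identification of $\cN$ together with its differentials. We must check that $\eta$ composed with Libman's functor $\delta_{\cF^c}$ yields coefficient maps that agree, under the chosen isomorphisms $\cN(\sigma)\cong H^n(\Aut_\cL(\sigma))$, with the maps induced by $B\Res^\sigma_\tau$. This follows from the naturality square for $\delta_{\cF^c}$ stated above. For non-normalized representatives, the isomorphism is transported along an $\cF$-conjugation, which is harmless since only exactness is at stake.
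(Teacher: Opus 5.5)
Your proposal is correct and takes essentially the same route as the paper, whose proof simply invokes the isomorphism \eqref{eqn:isom2} together with the orbit-wise description of $C^*_G(|\cC'|;\cN)$, exactly as in the proof of Corollary \ref{cor:Main}. Your additional checks make explicit what the paper leaves implicit: that $N_G(\sigma)$ acts trivially on $\cN(\sigma)$ because $\cN$ factors through the poset $sd_{\cC'}(G)/G$, and the reindexing of orbits via Lemma \ref{lem:IsomQuotients}; one small wording point is that normalized cochains always compute the same cohomology, and the fact that $\eta$ deletes repetitions is what lets you identify them with cochains on strict chains.
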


\begin{proof} This follows from the isomorphism in \eqref{eqn:isom2}.   
\end{proof}

We conclude the paper with the proof of Corollary \ref{cor:MainNorm}.

\begin{proof}[Proof of Corollary \ref{cor:MainNorm}] In the proof of \cite[Lemma 10.14]{Yalcin-Higher}, we showed that for every fully normalized chain $\sigma=(P_0<\cdots< P_n)$ of $\cF$-centric subgroups in $|\cC|$, there is an isomorphism
$$ H^* ( N_{\cF} (\sigma) ) \cong H^* (\Aut _{\cL} (\sigma ))$$
where $N_{\cF} (\sigma)$ denotes the normalizer fusion system of $\sigma$ defined in Definition \ref{def:NormFusion}. By  choosing fully $\cF$-normalized representatives for $|\cC|_n/\cF$, and using these cohomology isomorphisms, we see that the cochain complex in Proposition \ref{pro:MainAut} is isomorphic to the cochain complex given in Corollary \ref{cor:MainNorm}.
\end{proof}

%-------------------------

\end{document}